\documentclass[12pt]{article}
\usepackage[top=2.54cm, bottom=2.54cm, left=2.5cm, right=2.5cm]{geometry}
\usepackage[tbtags]{amsmath}
\usepackage{amssymb}
\usepackage{amsthm}
\usepackage{fancyhdr}
\usepackage{latexsym}
\usepackage{mathrsfs}
\usepackage{xcolor}
\usepackage{wasysym}
\usepackage{fancyhdr}
\usepackage{float}
\usepackage{graphicx}
\usepackage[numbers,sort&compress]{natbib}
\usepackage{pict2e,color}
\usepackage{tikz}
\usepackage{tkz-graph}
\usetikzlibrary{arrows,shapes,chains}
\usepackage{enumerate}
\usepackage {verbatim}
\usepackage{tikz-cd}
\usepackage{rotating}
\usepackage{subcaption}
\allowdisplaybreaks[4]

\newtheorem{theorem}{\bf Theorem}[section]
\newtheorem{lemma}[theorem]{Lemma}

\newtheorem{conjecture}[theorem]{Conjecture}

\newtheorem{definition}[theorem]{Definition}
\newtheorem{proposition}[theorem]{Proposition}
\theoremstyle{remark}

\newcommand{\norm}[1]{\left\lVert#1\right\rVert}

\makeatletter
\newcommand*{\rom}[1]{\expandafter\@slowromancap\romannumeral #1@}
\makeatother

\begin{document}
	\title{Finding irregular subgraphs via local adjustments}
	
	\author{Jie Ma\thanks{School of Mathematical Sciences, University of Science and Technology of China, Hefei, Anhui, 230026, China. Research supported by National Key Research and Development Program of China 2023YFA1010201 and National Natural Science Foundation of China grant 12125106. Email: jiema@ustc.edu.cn.}~~~
		Shengjie Xie\thanks{School of Mathematical Sciences, University of Science and Technology of China, Hefei, Anhui, 230026, China. Email: jeff\_776532@mail.ustc.edu.cn.}
	}
	

\date{}
\maketitle

\begin{abstract}
	For a graph $H$, let $m(H,k)$ denote the number of vertices of degree $k$ in $H$. 
	A conjecture of Alon and Wei states that for any $d\geq 3$,
	every $n$-vertex $d$-regular graph contains a spanning subgraph $H$ satisfying $|m(H,k)-\frac{n}{d+1}|\leq 2$ for every $0\leq k \leq d$.
	This holds easily when $d\leq 2$.
	An asymptotic version of this conjecture was initially established by Frieze, Gould, Karo\'nski and Pfender, subsequently improved by Alon and Wei, and most recently enhanced by Fox, Luo and Pham, approaching its complete range. 
	All of these approaches relied on probabilistic methods.

	In this paper, we provide a novel framework to study this conjecture, based on localized deterministic techniques which we call local adjustments.  
	We prove two main results. 
	Firstly, we show that every $n$-vertex $d$-regular graph contains a spanning subgraph $H$ satisfying $|m(H,k)-\frac{n}{d+1}|\leq 2d^2$ for all $0\leq k \leq d$, which provides the first bound independent of the value of $n$.
	Secondly, we confirm the case $d=3$ of the Alon-Wei Conjecture in a strong form. 
	Both results can be generalized to multigraphs and yield efficient algorithms for finding the desired subgraphs $H$. 
	Furthermore, we explore a generalization of the Alon-Wei Conjecture for multigraphs and its connection to the Faudree-Lehel Conjecture concerning irregularity strength. 
\end{abstract}

\section{Introduction}
In this paper, we use the convention that the term ``multigraph'' refers to a graph allowing parallel edges but with no loops, while the term ``graph'' denotes a {\it simple} graph (i.e., a graph without parallel edges and loops).
For a multigraph $H$ and an integer $k\geq 0$, 
let $m(H,k)$ denote the number of vertices of degree $k$ in $H$, 
and $m(H)=\max_{k\geq 0} m(H,k).$

Alon and Wei \cite{ALO2022} proposed the problem of finding highly irregular subgraphs $H$ in an $n$-vertex $d$-regular graph $G$.
In this context, $m(H)$ can be viewed as a measure of the irregularity of subgraphs $H$, where a smaller value of $m(H)$ indicates a more dispersed distribution of the vertex degrees of $H$, thus suggesting a higher level of irregularity.
Additionally, using the pigeonhole principle, it is easy to see that any spanning subgraph $H$ of $G$ satisfies $m(H)\geq \lceil\frac{n}{d+1}\rceil$. 
Alon and Wei \cite{ALO2022} made the following conjecture, 
which asserts the existence of a spanning subgraph $H$ with a small value of $m(H)$ that nearly matches this lower bound.

\begin{conjecture}[Alon-Wei \cite{ALO2022}, see Conjecture 1.1]\label{conj}
	Every $d$-regular graph $G$ on $n$ vertices contains a spanning subgraph $H$ such that $$\left|m(H,k) - \frac{n}{d+1}\right|\leq 2 \mbox{ holds for for all } 0\leq k \leq d.$$
\end{conjecture}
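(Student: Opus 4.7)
The plan is to combine a careful probabilistic initial construction with a refined deterministic local-adjustment scheme built on augmenting walks, aiming to tighten the paper's bound of $2d^2$ down to the conjectured $2$. As a first phase, I would construct an initial spanning subgraph $H_0$ of $G$ with $|m(H_0,k)-n/(d+1)|\le C$ for every $0\le k\le d$, where $C$ depends only on $d$. This can be obtained either by adapting the Fox–Luo–Pham probabilistic argument or, more simply, by invoking the paper's first main theorem (bound $2d^2$) as a black box. The goal is then to push $H_0$ all the way down to the tight bound $2$ by a sequence of purely deterministic moves, ensuring that no constant slack depending on $d$ remains at the end.

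In the second phase, I would formalize the local-adjustment framework. An \emph{alternating walk} in $G$ relative to $H$ is a sequence $v_0, e_1, v_1, \ldots, e_\ell, v_\ell$ whose edges alternate between $E(H)$ and $E(G)\setminus E(H)$; toggling each $e_i$ in or out of $H$ then affects $H$-degrees only at the two endpoints $v_0$ and $v_\ell$, shifting each by $\pm 1$. Define the potential
\[
\Phi(H) \;=\; \sum_{k=0}^{d}\Bigl(\max\{m(H,k)-\lfloor n/(d+1)\rfloor - 2,\; 0\} + \max\{\lceil n/(d+1)\rceil - m(H,k) - 2,\; 0\}\Bigr),
\]
which vanishes precisely when the conjectured bound holds. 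The central lemma to establish is: whenever $\Phi(H)>0$, some alternating walk toggle strictly decreases $\Phi$. Iterating this lemma starting from $H_0$ terminates at a subgraph with $\Phi=0$ in at most $O(nC)$ steps, yielding both the existential statement and a polynomial-time algorithm.

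The main obstacle is proving the reduction lemma at the tight threshold $2$. Standard reachability arguments — exploiting $d$-regularity together with a Hall- or Menger-type condition on the bipartite ``deficit'' graph between over- and under-represented degree classes — readily produce augmenting alternating walks once $\Phi$ is sufficiently large (e.g.\ when some $m(H,k)$ exceeds $n/(d+1)$ by a constant depending on $d$); this is essentially the mechanism behind the paper's $2d^2$ bound. Squeezing the threshold down to $2$ requires ruling out very special local configurations in which no simple alternating walk augments $\Phi$. I expect to need more flexible moves — compound toggles supported on a union of two alternating walks, or blossom-style detours inspired by matching theory — and to show that such moves are always available in a simple $d$-regular graph the moment $\Phi>0$. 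The treatment of $d=3$ in the paper strongly suggests the argument will proceed via an intricate case analysis indexed by the excess degree class $k^+$, the complementary deficient class $k^-$, and the local neighbourhood structure of a witness vertex; adapting this uniformly across all $d\ge 3$, without the constant slack blowing up with $d$, is the principal hurdle and is exactly where any straightforward extension of the paper's techniques stalls.
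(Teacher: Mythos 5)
The statement you are addressing is a conjecture, and the paper does not prove it: it only establishes the weaker bound $2d^2$ (Theorem~\ref{thm1}) and the case $d=3$ (Theorem~\ref{thm2}), explicitly stating that the general conjecture remains open. Your proposal likewise does not constitute a proof. Its entire weight rests on the ``reduction lemma'' that whenever $\Phi(H)>0$ some alternating-walk toggle (or compound move) strictly decreases $\Phi$, and you never prove this; you yourself flag it as ``the principal hurdle'' where ``any straightforward extension of the paper's techniques stalls.'' But that lemma \emph{is} the conjecture, up to routine bookkeeping: once it is granted, everything else (the initial subgraph from Theorem~\ref{thm1}, the integer-valued potential, the termination count) is easy. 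So the proposal has a genuine gap precisely at the only nontrivial step, and the gap is not a technicality one could expect to fill by ``Hall- or Menger-type'' reachability arguments. Indeed, the paper's own local-adjustment machinery (Lemmas~\ref{lem:reduce}--\ref{lem:multistar}) provably stalls at scale $\Theta(d^2)$ in the $\mathbf{b}$-vector, and the tight examples mentioned in the paper (two disjoint four-cycles, $K_{d,d}$ for odd $d$, and the multigraph examples in Section~\ref{Sec:remarks}) show that at threshold $2$ there exist configurations where no single local move helps, so the existence of an augmenting move whenever $\Phi>0$ cannot be asserted without a substantially new idea.

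Two further points. First, even the paper's resolution of $d=3$ does not follow your template of fixing the host graph and monotonically decreasing a potential; it instead builds the host multigraph itself from $mK_2^3$ by the operations of Definition~\ref{Def:cubic} and maintains carefully chosen ``states'' (Definition~\ref{def:states}) along that construction, with the local adjustments of Lemmas~\ref{state1}--\ref{op1} serving only to repair the state after each growth step. This is evidence that a pure potential-descent scheme inside a fixed $G$ may not suffice at the tight threshold. Second, a minor technical slip: toggling an alternating \emph{walk} changes degrees only at its two endpoints when the walk is a trail with distinct edges and distinct endpoints; closed walks or repeated edges behave differently, so the class of moves would need to be defined more carefully before the potential analysis can even be set up.
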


\noindent It is easy to see that this conjecture holds when $d\leq 2$.
If true, this is optimal in the sense that the bound cannot be improved by ``$\leq 1$''. A specific example is the vertex disjoint union of two four-cycles, as shown in \cite{ALO2022}. 
Additionally, other examples that support this conjecture include all complete bipartite graphs $K_{d,d}$ for odd integers $d$.\footnote{For odd $d$, it can be demonstrated (through non-trivial analysis) that any spanning subgraph $H$ of $K_{d,d}$ has some $0\leq k\leq d$ such that either $m(H,k)=0$ or $m(H,k)\geq 3$.}

Conjecture~\ref{conj} is closely related to the problem of determining the {\it irregularity strength} $s(G)$ of a graph $G$.
The irregularity strength $s(G)$, introduced in \cite{CJLORS1988}, denotes the least positive integer $s$
for which there exists a function $f:E(G)\to \{1,2,\ldots, s\}$ such that 
the sums of $f(e)$ over all incident edges $e$ to each vertex of $G$ are distinct. 
A well-known conjecture of Faudree and Lehel \cite{FL1988} states that 
there exists an absolute constant $c>0$ such that for any $d\geq 2$, 
every $n$-vertex $d$-regular graph $G$ has $s(G)\leq \frac{n}{d}+c$.
This conjecture is tight up to the constant $c$ and remains open, despite recent breakthroughs in \cite{Prz2022, PW2023-1, PW2023-2}. 
Alon and Wei \cite{ALO2022} built a connection between these two problems (see its Theorems 1.7 and 1.8). 
In particular, they proved that any $d$-regular graph $G$ contains a spanning subgraph $H$ with $m(H)\leq 2s(G)-2$.
For further exploration of the irregularity strength, 
we recommend referring to the literature that includes works such as
\cite{Amar1993, BK2004, CL2008, DGG1992, EHLW1990, FJLR1989, FL1988, FGKP2010, FGKP2002, Gya1998, KKP2011, Leh1991, MP2014, Nie2000, Prz2008, Prz2009, Prz2022, PW2023-1, PW2023-2}.

Returning to Conjecture~\ref{conj}, 
an early result of Frieze, Gould, Karo\'{n}ski and Pfender \cite{FGKP2002} shows that every $n$-vertex $d$-regular graph contains a spanning subgraph $H$ with
\begin{equation}\label{equ:1+o(1)}
	m(H,k)=(1+o(1))\frac{n}{d+1} \mbox{ for all $0\leq k\leq d$}
\end{equation}
whenever $d\leq (n/\log n)^{1/4}$ is provided.
Alon and Wei \cite{ALO2022} enhanced this result by demonstrating the same statement under a relaxed condition of $d=o((n/\log n)^{1/3})$. 
Very recently,  Fox, Luo and Pham \cite{FLP24} proved that \eqref{equ:1+o(1)} holds under an even weaker condition, namely $d=o(n/(\log n)^{12})$.
It is worth noting that all of these aforementioned results are obtained by employing probabilistic methods, specifically by considering random spanning subgraphs.

In this paper, we present a novel deterministic approach for addressing Conjecture~\ref{conj}.
Our approach is centered around the recursive updating of spanning subgraphs $H$ through a series of localized operations, referred to as {\it local adjustments}. 
In comparison to the previous probabilistic approach, our method demonstrates superior performance specifically when applied to {\it sparse} $d$-regular graphs, offering two distinct advantages. 
Firstly, it provides a deterministic algorithm with efficient time complexity for the construction of irregular spanning subgraphs. 
Secondly, it extends its applicability to a wider range of graphs, including all $d$-regular {\it multigraphs}.
This extension is particularly significant as it plays a critical role in our discussion of a generalization of Conjecture~\ref{conj} that would imply the Faudree-Lehel conjecture (see Section~\ref{Sec:remarks} for details).

Our first result is the following. 
Throughout this paper, when we refer to a subgraph of a multigraph, it is implied that the subgraph is also a multigraph.

\begin{theorem}\label{thm1}
	Every $d$-regular multigraph $G$ on $n$ vertices contains a spanning subgraph $H$ such that  $$\left|m(H,k) - \frac{n}{d+1}\right|\leq 2d^2
	\mbox{ holds for all $0\leq k \leq d$}.$$
	Moreover, there exists an algorithm with time complexity $O_d(n^{d+1})$ to find such $H$.
\end{theorem}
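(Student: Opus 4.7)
The plan is to start from an initial spanning subgraph $H_0$ of $G$ (for concreteness $H_0=G$, so every vertex initially has degree $d$) and iteratively perform \emph{local adjustments}, each consisting of flipping a short alternating path in $G$. Recall that an alternating path $P=v_0v_1\cdots v_\ell$ with respect to $H$ is a path in $G$ whose consecutive edges alternate between $E(H)$ and $E(G)\setminus E(H)$. Replacing $H$ by $H\triangle E(P)$ keeps the degree of every interior vertex fixed (it loses and gains exactly one $H$-edge) while shifting each endpoint's degree by $\pm 1$; the sign pattern is dictated by the parity of $\ell$ and by whether $v_0v_1\in E(H)$. This makes such flips a precise tool for moving two chosen vertices between degree classes while leaving everything else untouched.

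Set $\tau:=n/(d+1)$ and measure progress by the potential $\Phi(H):=\sum_{k=0}^{d}\bigl(m(H,k)-\tau\bigr)_+$. The identity $\sum_{k=0}^{d} m(H,k)=n=(d+1)\tau$ forces the total excess to equal the total deficit, so bounding $\Phi$ controls $|m(H,k)-\tau|$ on both sides. The heart of the argument is the following \emph{local-search lemma}: whenever $m(H,k^*)>\tau+2d^2$ for some class $k^*$, there is an alternating path of length $O(d)$ whose flip strictly decreases $\Phi$. To prove it, I would run an alternating breadth-first search from a vertex $v\in S_{k^*}$; since $G$ has maximum degree $d$, the depth-$i$ alternating neighborhood of $v$ has size at most $d^i$. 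Using this two-level estimate together with the hypothesis $m(H,k^*)>\tau+2d^2$, the BFS must reach some vertex $u$ whose current class has strict slack; by choosing the parity of the $v$-to-$u$ alternating path appropriately, the flip can be arranged so that $v$ leaves $S_{k^*}$ and $u$ enters a deficit class, strictly reducing $\Phi$.

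The hard part will be this local-search lemma: one must match the parity of the chosen path with the desired direction of change at both endpoints, verify that the receiving class for $u$ retains enough slack so that no new excess is created, and handle the boundary classes $k=0$ and $k=d$ separately since only one direction of change is available there. The threshold $2d^2$ enters precisely as the quantity needed to dominate the two-step neighborhood bound $d+d^2$, ensuring that the BFS cannot be trapped inside excess classes and that a useful endpoint $u$ always exists. Once the lemma is established, termination is immediate: the integer potential $\Phi(H_0)\le n$ drops by at least one per adjustment, yielding at most $O(n)$ iterations; a careful implementation that enumerates alternating paths of length at most $d$ requires $O_d(n^d)$ work per iteration, giving the claimed $O_d(n^{d+1})$ total runtime. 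Since nothing in the argument exploits simplicity of $G$, the entire proof carries over to $d$-regular multigraphs.
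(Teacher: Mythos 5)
Your reduction to a ``local-search lemma'' is where the argument breaks, and I do not believe the lemma can be repaired in the form you state it. First, the justification offered is a non sequitur: the bound $d+d^2$ on the size of a depth-two alternating neighborhood says nothing about whether that neighborhood escapes the excess classes, so the hypothesis $m(H,k^*)>\tau+2d^2$ (a global count) cannot force the BFS from a single vertex to reach a class with slack. Second, and more seriously, the claim that a single alternating-path flip strictly decreases $\Phi(H)=\sum_k (m(H,k)-\tau)_+$ whenever some class exceeds $\tau+2d^2$ is false, even with no restriction on the path length. A flip changes only the two endpoint degrees, each by exactly $\pm 1$, so vertices can only migrate between \emph{adjacent} degree classes. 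Take a configuration (realizable, e.g., inside a disjoint union of copies of $K_{d+1}$ with $d+1\mid n$) in which $m(H,j)=\tau$ for all $1\leq j\leq d-1$, while $m(H,d)=\tau+K$ and $m(H,0)=\tau-K$ with $K$ arbitrarily large. Then every possible endpoint move contributes $\geq 0$ to $\Delta\Phi$: a vertex leaving class $d$ gains $-1$ but must enter the saturated class $d-1$ for $+1$; a vertex leaving class $1$ into class $0$ contributes $0$; every other move contributes $+1$ or $0$. Hence no flip of any length decreases $\Phi$, so your potential cannot make progress by single path flips, and the $O(n)$-iteration termination argument collapses with it.

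This obstruction is exactly what the paper's machinery is built to get around, and the comparison is instructive. The paper does not use the raw excess potential: it works with the partial-sum (irregularity) vector $\mathbf{b}(H)$, allows ``neutral'' moves that leave $\sum_j|b_j(H)|$ unchanged but increase the sorted vector $\mathbf{C}(H)$ lexicographically (Definition~\ref{Dfn:improv}, type (B)), and — crucially — its local adjustments are not path flips but additions/deletions of single edges and of \emph{multi-stars}, whose existence is extracted by a double-counting argument (Lemmas~\ref{ext} and~\ref{d+-}); deleting a multi-star changes the center's degree by more than one in a single step, which is precisely what the configuration above requires. The price is that the number of improvements is only bounded by $O_d(n^d)$ (Lemma~\ref{num_op}), not $O(n)$, which is where the $O_d(n^{d+1})$ runtime comes from. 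If you want to salvage your approach you would need either composite moves that shift a vertex's degree by more than one, or a more refined potential (such as the paper's $\mathbf{b}$-vector with a lexicographic tiebreak) under which the ``neutral'' flips still count as progress; as written, the proposal has a genuine gap at its central step.
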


To determine the leading coefficient hidden in the time complexity $O_d(n^{d+1})$, it suffices to choose $O(d^{d+2})$.
We point out that this result implies the first bound for Conjecture~\ref{conj} that is independent of the value of $n$.
Furthermore, when compared with the result of Fox, Luo and Pham \cite{FLP24}, Theorem~\ref{thm1} gives a more precise bound,
particularly when $d = o(n^{1/3})$.

Our second result focuses on cubic multigraphs and confirms the validity of Conjecture~\ref{conj} for the case $d=3$ (in fact, in a slightly stronger form, as stated in Theorem~\ref{thm2:stronger}).

\begin{theorem}\label{thm2}
	Every cubic multigraph $G$ on $n$ vertices contains a spanning subgraph $H$ such that
	$$\left|m(H,k) - \frac{n}{4}\right|\leq 2 \mbox{ holds for all $0\leq k \leq 3$}. $$
	Moreover, there exists a linear time algorithm to find such a spanning subgraph $H$.
\end{theorem}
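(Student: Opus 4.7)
The plan is to push the local adjustment framework used for Theorem~\ref{thm1} from the bound $2d^2 = 18$ (at $d = 3$) down to the sharp bound $2$ by exploiting the rigidity of cubic multigraphs. The procedure begins with an initial spanning subgraph $H_0$ having $|E(H_0)| \in \{\lfloor 3n/4 \rfloor, \lceil 3n/4 \rceil\}$, since this is the only edge count compatible with $(m(H,0), m(H,1), m(H,2), m(H,3)) \approx (n/4, n/4, n/4, n/4)$ (using $\sum_k m(H,k) = n$ and $\sum_k k \cdot m(H,k) = 2|E(H)|$). A concrete such $H_0$ can be built in linear time by combining a perfect matching $M$ of $G$ with roughly half of the edges of the complementary 2-factor, chosen cycle by cycle; Petersen's theorem supplies $M$ in the bridgeless case, and bridges are handled by a short preprocessing.

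The elementary move is an edge swap that deletes one edge of $H$ and inserts one edge of $E(G) \setminus H$. Such a swap preserves $|E(H)|$, alters the $H$-degree of at most four vertices by $\pm 1$, and therefore shifts the vector $(m(H,0), \dots, m(H,3))$ in a fully enumerable way. Introduce the imbalance potential
\begin{equation*}
    \Phi(H) = \sum_{k=0}^{3}\max\bigl\{0,\; |m(H,k) - n/4| - 2\bigr\},
\end{equation*}
so that $\Phi(H) = 0$ is exactly the desired conclusion. Whenever $\Phi(H) > 0$, some degree class $k$ is overfull and (by $\sum_k m(H,k) = n$) some class $\ell$ is underfull, and the goal of each iteration is to exhibit a swap, or a short sequence of swaps, that transfers one unit of mass from $k$ to $\ell$, strictly decreasing $\Phi$. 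Since $\Phi(H_0) = O(n)$ and each iteration can be implemented in amortized $O(1)$ time using buckets of vertices grouped by their current $H$-degree, the overall running time is linear.

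The crux of the argument, and the principal obstacle, is showing that an improving swap always exists when $\Phi(H) > 0$. Because every vertex has exactly three incident edges, a vertex $v$ of $H$-degree $k$ has precisely $k$ incident $H$-edges and $3-k$ incident non-$H$-edges, so the local configurations that can arise at an overfull/underfull pair $(k,\ell)$ form only a small finite list. In the generic case, picking $u$ with $d_H(u) = k$ and $x$ with $d_H(x) = \ell$ together with compatible neighbors produces the desired swap immediately. The hard cases are those in which every candidate swap at $u$ would push some auxiliary vertex out of band; here one must either lengthen the move to an alternating walk of bounded length or perform a pair of simultaneous swaps whose side effects cancel. Bridges, parallel edges, and a short list of small exceptional cubic multigraphs are the most delicate configurations to handle, and this is presumably what motivates the slightly stronger formulation in Theorem~\ref{thm2:stronger}: that statement should be designed as an induction-friendly hypothesis that absorbs these boundary cases without weakening the $\pm 2$ bound.
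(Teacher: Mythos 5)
There is a genuine gap, and it sits exactly where you place it: the claim that whenever $\Phi(H)>0$ an improving swap, or a bounded-length sequence of swaps, always exists is never proved. Everything after ``The crux of the argument'' is a description of the difficulty rather than an argument, and that difficulty is the entire content of the theorem. Nothing in your proposal rules out a local minimum of $\Phi$ with $\Phi>0$, and the enumeration of ``hard cases'' (alternating walks of bounded length, paired swaps with cancelling side effects, bridges, parallel edges, exceptional small multigraphs) is precisely the part that would have to be carried out in full; since the bound $\pm 2$ is tight (two disjoint four-cycles), there is no slack to absorb a case you cannot handle. It is telling that the paper's own single-host local-adjustment machinery (Section~\ref{Sec:thm1}), which \emph{does} prove that an improving move always exists via Lemma~\ref{lem:reduce}, only reaches $\norm{\mathbf{b}(H)}_{\infty}\leq d^2$, i.e.\ deviation $18$ at $d=3$; the authors abandon the fixed-host local search for the cubic case precisely because driving it to the sharp constant is not achievable this way. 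Instead they prove a structural generation lemma (Lemmas~\ref{ctr} and~\ref{cons}: every cubic multigraph arises from a disjoint union of $K_2^3$'s by the two operations of Definition~\ref{Def:cubic}) and run an induction over this sequence of \emph{host} multigraphs, maintaining a spanning subgraph in ``state-$0$'' via a finite state machine of $O(1)$-time reductions (Lemmas~\ref{lem:state-0}, \ref{state1}, \ref{state2}, \ref{op1}), which yields Theorem~\ref{thm2:stronger}. Your guess about the role of Theorem~\ref{thm2:stronger} is in the right spirit (it is indeed an induction-friendly strengthening), but the induction is over the construction of $G$ itself, not over a swap process inside a fixed $G$.

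Two secondary problems: the initial subgraph via Petersen's theorem does not exist in general, since cubic multigraphs with bridges need not have a perfect matching at all, and the promised ``short preprocessing'' is not supplied (in the end the starting point is immaterial only if the improvement step is proved, which it is not); and the amortized $O(1)$ search for an improving move via degree buckets also needs an argument, since an improving move in your hard cases is not a single local swap but a structure you have not shown how to locate in constant time. Neither of these is fatal in principle, but combined with the missing existence proof for improving moves, the proposal as written does not establish the theorem.
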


For an overview of both results, we refer readers to the beginning of Section~\ref{Sec:thm1} for an outline of Theorem~\ref{thm1}, and Subsection \ref{subsec:char-cubic} for an outline of Theorem~\ref{thm2}.
We add a side remark here that while both proofs of Theorems~\ref{thm1} and \ref{thm2} utilize local adjustments, the methodologies differ in detail.
The proof of Theorem~\ref{thm1} involves continuously updating spanning subgraphs of the host multigraph $G$.
On the other hand, the proof of Theorem~\ref{thm2} incorporates an inductive argument 
where a sequence of host multigraphs $G_i$ is constructed, and the spanning subgraphs $H_i$ are updated within each $G_i$.

The remaining sections of the paper are organized as follows.
In Section~\ref{Sec:prim},  we introduce the necessary notation and concepts, including an explanation of the mean of local adjustments.
In Section~\ref{Sec:thm1}, we present the completed proof of Theorem~\ref{thm1}.
In Section~\ref{Sec:thm2}, we provide the proof of Theorem~\ref{thm2}.
Finally, in Section~\ref{Sec:remarks}, we conclude with some closing remarks.
Throughout this paper, let $[k]$ denote the set $\{1,2,...,k\}$ for positive integers $k$.

\section{Irregularity vectors and local adjustments}\label{Sec:prim}
In this section, we provide notation and concepts necessary for the upcoming proofs.
One of the key notions we define is a quantitative measure that captures the irregularity exhibited by spanning subgraphs (see the definition of \eqref{equ:vector} for the {\it irregularity vector}). Additionally, we introduce our {\it local adjustment} techniques\footnote{By a local adjustment, we mean an operation of adding or deleting edges of some specific multigraphs (for further details, we refer to Subsections~\ref{subsec:local-adj} and \ref{subsec:char-cubic}).} and illustrate how these techniques contribute to the irregularity measure.

\subsection{Irregularity vectors}
Throughout this section, let $G$ be a fixed $n$-vertex $d$-regular multigraph and $H$ be a spanning subgraph of $G$. For $0\leq i\leq d$, let $V_i^H = \{v\in V(G): d_H(v) = i\}$ be the set of vertices with degree $i$ in $H$.
So $m(H,i)=|V_i^H|$.
By writing $a_i(H) = |V_i^H| - \frac{n}{d+1}$, we define $$\mathbf{a}(H) = (a_0(H), a_1(H), \ldots, a_d(H))$$ to be the vector whose entries indicate the difference between $|V_i^H|$ and $\frac{n}{d+1}$.
The Alon-Wei conjecture is equivalent to finding a spanning subgraph $H$ such that the $L_\infty$ norm $\norm{\mathbf{a}(H)}_{\infty}$ of $\mathbf{a}(H)$ is at most two.
However, in establishing the general bound of Theorem \ref{thm1}, our focus primarily lies on bounding the following convolution version $\mathbf{b}(H)$ of $\mathbf{a}(H)$:
Define $b_0(H) = 0$
and for $1\leq i\leq d+1$, define $b_i(H) = \sum_{j=0}^ {i-1} a_{j}(H)$.
(Note that we have $b_{d+1}(H)=0$.)
Let
\begin{equation}\label{equ:vector}
	\mathbf{b}(H) = (b_1(H), b_2(H), \ldots, b_d(H))
\end{equation}
be the {\it irregularity vector} of $H$.
It is clear that $\mathbf{a}(H)$ and $\mathbf{b}(H)$ completely determine each other and moreover, since $|a_i(H)|=|b_{i+1}(H)-b_i(H)|\leq |b_{i+1}(H)|+|b_i(H)|$, we have
\begin{equation}\label{equ:a<2b}
	\norm{\mathbf{a}(H)}_{\infty}\leq 2 \norm{\mathbf{b}(H)}_{\infty}.
\end{equation}
An advantage of the approach of bounding $\norm{\mathbf{b}(H)}_{\infty}$ is its simplicity in evaluating the changes of $\mathbf{b}(H)$ that result from specific local adjustments; see Lemma~\ref{change_b}.

The following proposition presents another useful perspective we adopt in the proof. It establishes the symmetric properties of $\mathbf{a}$ and $\mathbf{b}$ between a spanning subgraph $H$ of a $d$-regular multigraph $G$ and its {\it complement} $G\backslash H = (V(G), E(G)\backslash E(H))$ with respect to $G$.

\begin{proposition}\label{sym}
	Let $G$ be a $d$-regular multigraph and $H$ be a spanning subgraph of $G$. Then
	$a_i(G\backslash H) = a_{d-i}(H)$ for every $0\leq i\leq d$, and $b_i(G\backslash H) = - b_{d + 1 - i}(H)$ for every $1\leq i\leq d$.
\end{proposition}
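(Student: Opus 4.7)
The plan is to prove the two identities in order, using the fact that each vertex's degrees in $H$ and $G\setminus H$ sum to $d$, together with the telescoping definition of $\mathbf{b}$.

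First I would verify the degree-complement observation: since $G$ is $d$-regular and $E(H), E(G\setminus H)$ partition $E(G)$, for every $v\in V(G)$ we have $d_H(v) + d_{G\setminus H}(v) = d$. Hence $v\in V_i^{G\setminus H}$ if and only if $v\in V_{d-i}^H$, so $V_i^{G\setminus H} = V_{d-i}^H$ and $m(G\setminus H,i) = m(H,d-i)$. Subtracting $n/(d+1)$ from both sides gives $a_i(G\setminus H) = a_{d-i}(H)$ for all $0\le i\le d$, which is the first identity.

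Next I would derive the identity for $\mathbf{b}$ purely from the first identity and the definition $b_i(F) = \sum_{j=0}^{i-1} a_j(F)$. Substituting,
\[
b_i(G\setminus H) = \sum_{j=0}^{i-1} a_j(G\setminus H) = \sum_{j=0}^{i-1} a_{d-j}(H) = \sum_{k=d-i+1}^{d} a_k(H).
\]
Using that $\sum_{k=0}^{d} a_k(H) = b_{d+1}(H) = 0$ (this holds because $\sum_k m(H,k) = n = (d+1)\cdot\frac{n}{d+1}$), the right-hand side equals $-\sum_{k=0}^{d-i} a_k(H) = -b_{d-i+1}(H)$. This gives $b_i(G\setminus H) = -b_{d+1-i}(H)$ for every $1\le i\le d$.

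There is essentially no obstacle here; the proposition is a bookkeeping statement and both identities reduce immediately to the complementary-degree relation together with the telescoping identity $b_{d+1}(H) = 0$. I would present the proof in just a few lines, in the two-step order above.
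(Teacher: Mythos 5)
Your proposal is correct and follows essentially the same argument as the paper: it establishes $V_i^{G\backslash H}=V_{d-i}^H$ from the degree-complement relation and then obtains the $\mathbf{b}$ identity by reindexing the sum and using $\sum_{j=0}^{d}a_j(H)=0$. No issues to report.
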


\begin{proof}
	For every $0\leq i\leq d$, we have $V_i^{G\backslash H} = V_{d-i}^H$
	and thus $a_i(G\backslash H) = a_{d-i}(H)$.
	Since $\sum_{j=0}^d a_j(H)=0$,
	this implies that for every $1\leq i\leq d$,
	$$b_i(G\backslash H) = \sum\limits_{j=0}^{i-1}a_j(G\backslash H) = \sum\limits_{j=d-i+1}^d a_j(H) = - \sum_{j=0}^{d-i} a_j(H) = - b_{d-i+1}(H),$$ completing the proof.
\end{proof}

We observe from this proposition that finding a satisfactory spanning subgraph $H$ (say with $\norm{\mathbf{a}(H)}_{\infty}\leq C$) is equivalent to finding a satisfactory $G\backslash H$ (with the same bound $\norm{\mathbf{a}(G\backslash H)}_{\infty}\leq C$). 
In this context, we say that $H$ and $G\backslash H$ are {\it symmetric}.

\subsection{Local adjustments and their impacts on irregularity vectors}\label{subsec:local-adj}
Now we introduce a family of specialized multigraphs (which are called {\it multi-stars}, see Figure~\ref{Fig:multi-star}) that serve as the foundation for our local adjustment approach.
Subsequently, we explore their influence on the irregularity vector (see Lemma~\ref{change_b}).

Recall $H$ is a spanning subgraph of a multigraph $G$.
For an edge $xy\in E(G)$, we call $xy$ an \textbf{H(i,j)-edge} if $d_H(x)=i$ and $d_H(y)=j$.
We emphasize that $i$ can be $j$ or not, and $xy$ can be an edge in $H$ or $G\backslash H$.
Let $k, \ell_1, \ldots, \ell_s, \alpha_1,\ldots,\alpha_s\geq 1$ be integers.
Let {\bf $S_{(k; \ell_1, \ldots, \ell_s;\alpha_1,\ldots,\alpha_s)}(H)$} denote a multi-star of $H$ with center $u$ and distinct leaves $v_1,\ldots, v_s$ such that $d_H(u) = k$, $d_H(v_i)=\ell_i$ and there are $\alpha_i$ multi-edges between $u$ and $v_i$ for each $i\in [s]$.
Note that every edge between $u$ and $v_i$ in this multi-star is an $H(k,\ell_i)$-edge.
Similarly, let $\overline{S}_{(k; \ell_1, \ldots, \ell_s;\alpha_1,\ldots,\alpha_s)}(H)$ denote a multi-star of $G\backslash H$ with center $u$ and distinct leaves $v_1,\ldots, v_s$ such that $d_H(u) = k$, $d_H(v_i)=\ell_i$ and there are $\alpha_i$ multi-edges between $u$ and $v_i$ for each $i\in [s]$.

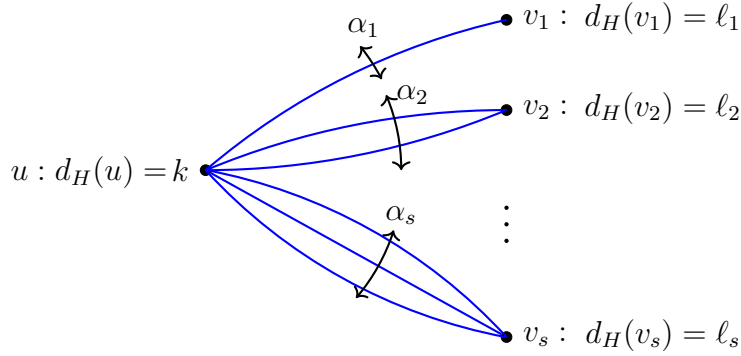
\begin{figure}[H]
	\begin{center}
		\begin{tikzpicture}[thin][node distance=1cm,on grid]
			\node[circle,inner sep=0.5mm,draw=black,fill=black](v)at(0,0) [label=left:
			$k$] {};
			\node[circle,inner sep=0.5mm,draw=black,fill=black](u1)at(4,2) [label=right:$v_1:$] {};
			\node[circle,inner sep=0.5mm,draw=black,fill=black](u2)at(4,0.8) [label=right:$v_2:$] {};
			\node[circle,inner sep=0.5mm,draw=black,fill=black](us)at(4,-2.22) [label=right:$v_s:$] {};
			\node[circle,inner sep=0.15mm,draw=black,fill=black](i1)at(4,-0.5) {};
			\node[circle,inner sep=0.15mm,draw=black,fill=black](i2)at(4,-0.72){};
			\node[circle,inner sep=0.15mm,draw=black,fill=black](i3)at(4,-0.94) {};
			
			\draw[thick,color=blue] (4,2) arc(102.53:130.60:9.220);
			\draw[thick,color=blue] (4,0.8) arc(90:112.62:10.4);
			\draw[thick,color=blue] (0,0) arc(270:292.62:10.4);
			\draw[thick,color=blue] (4,-2.22) arc(42.51:79.38:7.236);
			\draw[thick,color=blue] (0,0) arc(222.51:259.38:7.236);
			\draw[thick,color = blue](v)to(us);
			\draw[thick,<->,black] (2.34,1.21) arc(27.31:38.73:2.637);
			\draw(2.1,1.9)node {$\alpha_1$};
			\draw[thick,<->,black] (2.6,0) arc(0:22.62:2.6);
			\draw(2.75,1)node {$\alpha_2$};
			\draw[thick,<->,black] (2,-1.7) arc(319.64:342.26:2.625);
			\draw(2.6,-0.6)node {$\alpha_s$};
			
			\draw(-1.55,-0.03)node {$u:d_H(u)=$};
			\draw(6.08,2.03)node {$d_H(v_1)=\ell_1$};
			\draw(6.08,0.83)node {$d_H(v_2)=\ell_2$};
			\draw(6.08,-2.19)node {$d_H(v_s)=\ell_s$};
		\end{tikzpicture}
		\caption{The multi-star $S_{(k;\ell_1,\ldots,\ell_s;\alpha_1,\ldots,\alpha_s)}(H)$}
		\label{Fig:multi-star}
	\end{center}
\end{figure}

The next lemma will be frequently used in subsequent proofs.
For $i\in [d]$, let $\mathbf{e}_i \in \{0,1\}^d$ be the vector whose $i$-th entry is $1$ and all other entries are $0$.

\begin{lemma}\label{change_b}
	Let $G$ be an $n$-vertex $d$-regular multigraph and $H$ be a spanning subgraph of $G$.
	Then the following hold:
	\begin{enumerate}[(1)]
		\item If $xy\in E(H)$ is an $H(i,j)$-edge, then $\mathbf{b}(H-xy)= \mathbf{b}(H) + \mathbf{e}_{i} + \mathbf{e}_{j}$.
		
		\item If $uv \in E(G\backslash H)$ is an $H(i,j)$-edge, then $\mathbf{b}(H+uv) = \mathbf{b}(H) - \mathbf{e}_{i+ 1} - \mathbf{e}_{j + 1}$.
		
		\item Let $H'$ be obtained from $H$ by deleting the edges of a copy of $S_{(k; \ell_1, \ldots, \ell_s;\alpha_1,\ldots,\alpha_s)}(H)$. Let $m=\sum_{i=1}^s \alpha_i$.
		Then $\mathbf{b}(H') = \mathbf{b}(H) +\sum_{i=1}^{m}\mathbf{e}_{k + 1 - i} + \sum_{i=1}^s \sum_{j=1}^{\alpha_i} \mathbf{e}_{\ell_i+1-j}$.
		
		\item Let $H''$ be obtained from $H$ by adding the edges of a copy of $\overline{S}_{(k; \ell_1, \ldots, \ell_s;\alpha_1,\ldots,\alpha_s)}(H)$.
		Let $m=\sum_{i=1}^s \alpha_i$.
		Then $\mathbf{b}(H'') = \mathbf{b}(H) - \sum_{i=1}^{m}\mathbf{e}_{k + i} - \sum_{i=1}^s \sum_{j=1}^{\alpha_i} \mathbf{e}_{\ell_i+j} $.
	\end{enumerate}
\end{lemma}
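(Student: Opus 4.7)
The plan is to reduce all four statements to a single transparent combinatorial observation. Writing out the definition, one has
\[ b_l(H) \;=\; |\{v\in V(G): d_H(v) < l\}| \;-\; \frac{l n}{d+1}, \]
so whenever we modify some edges of $H$, the change in the $l$-th coordinate of $\mathbf{b}(H)$ simply records the net number of vertices that cross the threshold ``$d_H(v)<l$'' as a result of the modification. With this in hand, Parts (1) and (2) become direct checks, and Parts (3) and (4) follow by edge-by-edge bookkeeping.

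For Part (1), deleting an $H(i,j)$-edge $xy$ moves $d_H(x)$ from $i$ down to $i-1$ and $d_H(y)$ from $j$ down to $j-1$. The vertex $x$ newly enters $\{v:d_H(v)<l\}$ exactly for $l$ with $i-1<l\le i$, i.e.\ $l=i$, and analogously for $y$ with $l=j$. Hence the $l$-th coordinate of $\mathbf{b}$ gains $[l=i]+[l=j]$, which is precisely $\mathbf{e}_i+\mathbf{e}_j$. Part (2) is proved in the same way in the other direction: adding an $H(i,j)$-edge raises the endpoint degrees to $i+1$ and $j+1$, removing them from the set $\{v:d_H(v)<l\}$ exactly at $l=i+1$ and $l=j+1$, producing $-\mathbf{e}_{i+1}-\mathbf{e}_{j+1}$. (Alternatively, (2) follows from (1) applied to the complement via Proposition~\ref{sym}.)

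For Part (3), I would label the $m=\sum_{i=1}^s \alpha_i$ edges of the multi-star $S_{(k;\ell_1,\ldots,\ell_s;\alpha_1,\ldots,\alpha_s)}(H)$ in an arbitrary order and delete them one at a time, applying Part (1) at each step. Immediately before the $t$-th deletion the center $u$ has degree $k-(t-1)$, so the edge being removed is an $H(k-t+1,\cdot)$-edge in the current subgraph and contributes $\mathbf{e}_{k-t+1}$ to the running sum; telescoping over $t=1,\ldots,m$ gives the center contribution $\sum_{t=1}^m \mathbf{e}_{k+1-t}$. For each leaf $v_i$, the $\alpha_i$ edges incident to $v_i$ are deleted at some ordered subset of the steps; at the $j$-th such deletion, $v_i$ has current degree $\ell_i-j+1$ and contributes $\mathbf{e}_{\ell_i+1-j}$. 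Summing over $i$ and $j$ yields the stated leaf contribution. Part (4) is entirely analogous, using Part (2) in place of Part (1) and tracking the upward telescope $k\to k+1\to\cdots\to k+m$ for the center and $\ell_i\to\ell_i+\alpha_i$ for each leaf (or, once more, invoking the complement symmetry of Proposition~\ref{sym}).

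There is no real obstacle; this is a careful bookkeeping lemma. The only point worth flagging is that the per-edge increments depend only on the current endpoint degrees through the telescoping pattern, so the aggregate change is independent of the order in which the edges of the multi-star are processed, and no interaction terms arise even when several parallel edges share a common endpoint.
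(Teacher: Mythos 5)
Your proof is correct and follows essentially the same route as the paper: parts (1) and (2) by tracking how the single degree change at each endpoint shifts the partial sums $b_l=\sum_{j<l}a_j$ (your rewriting $b_l(H)=|\{v:d_H(v)<l\}|-\tfrac{ln}{d+1}$ is just an unwinding of that definition), and parts (3) and (4) by removing or adding the multi-star edges one at a time and telescoping via (1) and (2). The only cosmetic difference is that you allow an arbitrary deletion order and observe order-independence, whereas the paper fixes a particular order; the bookkeeping is identical.
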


\begin{proof}
	To prove (1), we first consider the change of $x$'s degree after the deletion of the edge $xy$. We see $a_i=a_{d_H(x)}$ decreases by $1$, and $a_{i-1}=a_{d_H(x) - 1}$ increases by $1$. Hence $b_{i}$ increases by $1$ and all other $b_k$'s (for $k\neq i$) remain the same. Similarly, considering the change of $y$'s degree, we have $b_{j}$ increases by $1$ and the others remain the same. Hence we have prove $(1)$.
	
	The proof of (2) is similar. After adding $uv$ to $H$, both $d(u)$ and $d(v)$ increase by $1$. Hence $b_{d(u)+1}(H)$ and $b_{d(v)+1}(H)$ decrease by $1$ respectively, while other $b_k$'s remain the same.
	
	For the proof of (3),
	let $u$ be the center and $v_1,\ldots, v_s$ be distinct leaves of the deleted multi-star, where $d_H(u) = k$, $d_H(v_i)=\ell_i$ and there are $\alpha_i$ multi-edges between $u$ and $v_i$ for each $i\in [s]$.
	Then $H'$ can be generated from $H$ by recursively deleting the $m$ edges $(uv_1)^1, \ldots, (uv_1)^{\alpha_1}, \ldots, (uv_s)^1, \ldots, (uv_s)^{\alpha_s}$ in order.
	Let $H^{(i)}$ be obtained from $H$ by deleting the first $i$ edges. In this process of evolution (from $H=H^{(0)}$ to $H'=H^{(m)}$),
	the degree of $u$ is updated from $k$ to $k-m$, while the degree of $v_i$ is updated from $\ell_i$ to $\ell_i-\alpha_i$ for $i\in [s]$. Using (1), it is easy to obtain the desired equation as follows
	\begin{align*}
		\mathbf{b}(H') =
		\mathbf{b}(H^{(0)}) + \sum_{i=1}^m(\mathbf{b}(H^{(i)})-\mathbf{b}(H^{(i-1)}))=\mathbf{b}(H) + \sum_{i=1}^{m}\mathbf{e}_{k + 1 - i} + \sum_{i=1}^s \sum_{j=1}^{\alpha_i} \mathbf{e}_{\ell_i+1-j}.
	\end{align*}
	
	By utilizing (2), we can derive the proof of (4) in a similar manner.
\end{proof}

\section{General regular multigraphs: Theorem~\ref{thm1}}\label{Sec:thm1}
This section is devoted to the proof of Theorem~\ref{thm1}.
Let $G$ be an $n$-vertex $d$-regular multigraph throughout this section.
We aim to show that there exists a spanning subgraph $H$ of $G$ such that
$\norm{\mathbf{a}(H)}_{\infty}\leq 2d^2$ and in view of \eqref{equ:a<2b}, it suffices to show that  $\norm{\mathbf{b}(H)}_{\infty}\leq d^2$.
Along the way, we explain that this proof can be turned into a deterministic algorithm with time complexity $O_d(n^{d+1})$.

First, we introduce a concept that help describing the process of updating spanning subgraphs of $G$.
We define $\mathbf{C}(H)$ as the increasing ordering of the sequence $\{|b_i(H)|\}_{i=1}^d$.

\begin{definition}\label{Dfn:improv}
Let $H, H'$ be two spanning subgraphs of $G$.
We say $H'$ is an {\bf improvement} of $H$ in $G$, if they satisfy that either
\begin{itemize}
	\item [(A).]  $\sum_{j=1}^d |b_j(H')| < \sum_{j=1}^d |b_j(H)|$, or
	\item [(B).] $\sum_{j=1}^d |b_j(H')|=\sum_{j=1}^d |b_j(H)|$ but $\mathbf{C}(H) < \mathbf{C}(H')$ holds in the lexicographic order.
\end{itemize}
\end{definition}

The following lemma is key for Theorem~\ref{thm1}, whose proof will be postponed in Subsections~\ref{subsec:cons-multi-stars} and \ref{subsec:gene-impr}.

\begin{lemma}\label{lem:reduce}
Let $d\geq 2$. Let $G$ be an $n$-vertex $d$-regular multigraph and $H$ be its spanning subgraph. Let $M\geq d$ be a constant.
If there exists some $i\in [d]$ with $|b_i(H)|>M$, then
\begin{itemize}
	\item either there exists some $j\in [d]$ with $|b_j(H)|\in (M-d, M]$,
	\item or one can construct an improvement of $H$ in $G$, using linear time $O_d(n)$.
\end{itemize}
\end{lemma}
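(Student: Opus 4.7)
The plan is to construct the required improvement whenever the gap condition fails. By Proposition~\ref{sym}, the multiset $\{|b_k(H)|\}_{k=1}^d$ and hence the sorted profile $\mathbf{C}(H)$ is preserved under replacing $H$ by $G\setminus H$, and every improvement of $G\setminus H$ translates (by interchanging edges with non-edges) to an improvement of $H$. Hence I may assume $b_{i_0}(H)>M$ for some $i_0\in [d]$. Let $i^*=\min\{k\in [d]: b_k(H)>M\}$. The failed gap condition combined with the minimality of $i^*$ forces $b_{i^*-1}(H)\leq M-d$, so $a_{i^*-1}(H)=b_{i^*}(H)-b_{i^*-1}(H)>d$ and
\[
|V_{i^*-1}^H|>\frac{n}{d+1}+d.
\]
Because $i^*\leq d$, every $u\in V_{i^*-1}^H$ has at least one neighbor in $G\setminus H$, supplying a large pool of candidate ``donor'' edges.

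Next I would try a single-edge adjustment. By Lemma~\ref{change_b}(2), for any $G\setminus H$-edge $uv$ with $u\in V_{i^*-1}^H$ and $d_H(v)=j$, the update $\mathbf{b}(H+uv)=\mathbf{b}(H)-\mathbf{e}_{i^*}-\mathbf{e}_{j+1}$ drops $|b_{i^*}|$ by exactly $1$ (since $b_{i^*}>M\geq d\geq 2$). Two sub-cases then give an immediate improvement: (i) if some candidate $v$ has $b_{j+1}(H)>0$, then $|b_{j+1}|$ also drops, yielding a type-(A) improvement; (ii) if some candidate has $b_{j+1}(H)\leq 0$ with $|b_{j+1}|\leq M-d$, then $\sum_k|b_k|$ is unchanged but the sorted profile shifts a low entry up by one and a high entry down by one, and a direct comparison of $\mathbf{C}(H)$ with $\mathbf{C}(H+uv)$ (splitting on whether consecutive low entries coincide) yields a type-(B) improvement. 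Scanning the donor class and each donor's $G\setminus H$-neighbors takes $O_d(n)$ time.

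The remaining obstruction, which I expect to be the main technical difficulty, arises when \emph{every} non-$H$-neighbor of \emph{every} $u\in V_{i^*-1}^H$ lies in a class $V_j^H$ with $b_{j+1}(H)<-M$; in that case both coordinates perturbed by a single add sit in the high region and a single-edge move need not produce any improvement. I would then couple the add of $uv$ with the deletion of an $H$-edge $vy$ incident to $v$. The composite operation leaves $d_H(v)$ unchanged and, by combining parts (1) and (2) of Lemma~\ref{change_b}, alters $\mathbf{b}$ only by $-\mathbf{e}_{i^*}+\mathbf{e}_{d_H(y)}$; if some such $y$ satisfies $b_{d_H(y)}(H)\leq 0$, an analogue of the two sub-cases above yields the required improvement. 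The hard part is to always locate such a $y$ within $O_d(n)$ time, and I expect the argument to combine the abundance of non-$H$-edges out of $V_{i^*-1}^H$ with the symmetric ``negative-side'' donor class $V_{j^*}^H$, where $j^*=\max\{k:b_k(H)<-M\}$ (which by the mirror argument also satisfies $|V_{j^*}^H|>\frac{n}{d+1}+d$), or with the richer multi-star moves provided by Lemma~\ref{change_b}(3)--(4), to guarantee that such a $y$ exists and can be found in linear time.
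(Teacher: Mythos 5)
Your single-edge analysis is sound and, in fact, mirrors the first half of the paper's argument: after reducing by symmetry to $b_{i^*}(H)>M$, your observation that $b_{i^*-1}(H)\leq M-d$ forces $a_{i^*-1}(H)>d$, so that $V_{i^*-1}^H\neq\emptyset$ and some $H(i^*-1,j)$-edge exists in $G\setminus H$, is a clean quantitative version of the paper's count $\sum_{i\in A^+}|V_{i-1}^H|>0$; and your cases (i)--(ii) are essentially Lemma~\ref{lem:H-edge}(2) together with the failed gap condition (minor quibble: in case (i) with $0<b_{j+1}(H)<\tfrac12$ the entry $|b_{j+1}|$ can increase, but by less than $1$, so the type-(A) conclusion survives).

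The genuine gap is exactly the case you flag and then leave open: when every $G\setminus H$-neighbour $v$ of every $u\in V_{i^*-1}^H$ lies in a class with $b_{d_H(v)+1}(H)<-M$. Your proposed composite move (add $uv$, delete an $H$-edge $vy$ with $b_{d_H(y)}(H)\leq 0$) comes with no argument that such a $y$ exists --- all $H$-neighbours of all such $v$ could a priori sit in classes with $b>0$ or with $b<-M$, and a purely local search around $V_{i^*-1}^H$ (or around the mirror class $V_{j^*}^H$) does not obviously produce a usable configuration. This is precisely where the paper invests its main technical effort: Lemma~\ref{d+-} is a global double-counting over the interval structure of $A^+=\{i:b_i>M\}$ and $A^-=\{i:b_i<-M\}$, with carefully chosen weights $n_i,m_i$, which guarantees that one of four configurations exists --- an $H(i,j)$-edge with $i\in A^-$, $j\notin A^+$, an $H(i-1,j-1)$-edge in $G\setminus H$ with $i\in A^+$, $j\notin A^-$, or a multi-star of one of two types with $m_k+1$ edges and multiplicities bounded by the $n_{\ell_q}$'s. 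Converting the multi-star configurations into improvements is itself nontrivial (Lemma~\ref{lem:multistar}), and it uses the failed gap condition in the form $b_{k-m_k}(H)\leq M-d$ to control the one coordinate ($\mathbf{e}_{k-m_k}$, respectively $\mathbf{e}_{k+m_k}$) that moves the wrong way, together with the bound $s\leq m_k+1\leq d$ to keep the sorted profile increasing. None of this is present in your sketch beyond a pointer to Lemma~\ref{change_b}(3)--(4), and the $O_d(n)$ running time for the hard case is likewise unaddressed; so the proposal, while on the right track for the easy cases, does not constitute a proof of the lemma.
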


In the next lemma, we show that the number of successive improvements one can possibly perform is finitely bounded.

\begin{lemma}\label{num_op}
Let $G$ be an $n$-vertex $d$-regular multigraph. Then any sequence $(H_0, H_1, H_2, \ldots)$ of spanning subgraphs of $G$ such that $H_{i+1}$ is an improvement of $H_i$ for each $i\geq 0$ has length at most $O_d(n^d)$.
\end{lemma}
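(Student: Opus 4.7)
The plan is to assign to each spanning subgraph $H$ a rank $\Phi(H)$ in a totally ordered finite set of size $O_d(n^d)$, and then observe that each improvement step strictly raises this rank; the lemma follows immediately.

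First, I would set $\Phi(H) := (\sigma(H), \mathbf{C}(H))$, where $\sigma(H) = \sum_{j=1}^d |b_j(H)|$, and equip its range with the total order $\Phi(H') \succ \Phi(H)$ iff either $\sigma(H') < \sigma(H)$, or $\sigma(H') = \sigma(H)$ and $\mathbf{C}(H') >_{\mathrm{lex}} \mathbf{C}(H)$. A direct reading of cases (A) and (B) in Definition~\ref{Dfn:improv} shows that $\Phi(H_{i+1}) \succ \Phi(H_i)$ at every step of the sequence, so the values $\Phi(H_0), \Phi(H_1), \dots$ are pairwise distinct.

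Next, I would count the possible values of $\Phi$. Writing $b_j(H) = \sum_{i=0}^{j-1}|V_i^H| - \tfrac{jn}{d+1}$ and using that each partial sum $\sum_{i=0}^{j-1}|V_i^H|$ is an integer in $[0,n]$, the number $b_j(H)$ lies in a shift of $\{0,1,\dots,n\}$ and so takes at most $n+1$ distinct values. Consequently $|b_j(H)|$ also takes at most $n+1$ values, and $\mathbf{C}(H)$---a non-decreasing sequence of length $d$ drawn from a set of cardinality at most $n+1$---has at most $\binom{n+d}{d} = O_d(n^d)$ possibilities. Since $\sigma$ is a function of $\mathbf{C}$, this bounds the number of distinct values of $\Phi$ by $O_d(n^d)$, and therefore bounds the length of the sequence as well.

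I do not expect real obstacles: the choice of $\Phi$ is essentially forced by Definition~\ref{Dfn:improv}, and the counting is an elementary stars-and-bars estimate. The only point that needs a brief verification is that every transition $H_i \to H_{i+1}$ really does satisfy $\Phi(H_{i+1}) \succ \Phi(H_i)$, which is immediate after unpacking cases (A) and (B).
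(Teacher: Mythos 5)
Your argument is correct in substance, and it packages the proof differently from the paper. The paper does not use a single rank function: it classifies each improvement by a ``type'' (type $0$ when $\sum_j |b_j|$ drops, type $k$ when the sum is unchanged and $c^{(k)}$, the $k$-th smallest of the $|b_j|$'s, is the first order statistic to increase) and then multiplies nested counts --- at most $(d+1)dn+1$ type-$0$ steps, and between consecutive steps of type $\le i$ at most $(d+1)n+1$ steps of type $i+1$ --- to get $O_d(n^d)$. You instead observe that Definition~\ref{Dfn:improv} literally defines a strict total order on the pairs $(\sigma(H),\mathbf{C}(H))$, so the values along the sequence are pairwise distinct and it suffices to count the range of this invariant; since $\sigma$ is the sum of the entries of $\mathbf{C}$, counting $\mathbf{C}$ suffices. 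This monovariant formulation is cleaner and avoids the type bookkeeping; the paper's nested count is more hands-on but gives an explicit constant $\bigl((d+1)dn+1\bigr)\bigl((d+1)n+1\bigr)^{d-1}$. One small inaccuracy in your counting step: the entries of $\mathbf{C}(H)$ are not drawn from a single set of size $n+1$, because $b_j(H)=\sum_{i=0}^{j-1}|V_i^H|-\tfrac{jn}{d+1}$ lives in a coset that depends on $j$, so the stars-and-bars bound $\binom{n+d}{d}$ over a common alphabet is not justified as stated. The fix is immediate: either note that the combined alphabet has size at most $(d+1)n+1$ (all values lie in $[0,n]\cap\tfrac{1}{d+1}\mathbb{Z}$), or more simply bound the number of unsorted vectors $\bigl(|b_1(H)|,\dots,|b_d(H)|\bigr)$ by $(n+1)^d$, since each coordinate is determined by an integer partial sum in $[0,n]$; either way the range has size $O_d(n^d)$ and your conclusion stands.
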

\begin{proof}
Let $H$ be any spanning subgraph of $G$.
It is evident that $\sum_{j=1}^d |b_j(H)| \leq dn$.
For $j\in [d]$, let $c^{(j)}(H)$ be the $j$-th smallest number in the sequence $\{|b_i(H)|\}_{i=1}^d$.
Then we have
\begin{equation}\label{equ:c^j}
	0\leq c^{(j)}(H) \leq n.
\end{equation}
Let $H_{i+1}$ be the improvement of $H_i$ in the sequence.
We say that $H_{i+1}$ has {\it type $0$} if $\sum_{j=1}^d |b_j(H_{i+1})| < \sum_{j=1}^d |b_j(H_i)|$,
and {\it type $k$} if $\sum_{j=1}^d |b_j(H_{i+1})|=\sum_{j=1}^d |b_j(H_i)|$, $c^{(j)}(H_{i})=c^{(j)}(H_{i+1})$ for every $j\leq k-1$ and $c^{(k)}(H_i)<c^{(k)}(H_{i+1})$.
Here, $k$ ranges between $1$ and $d-1$.

We point out that $b^{(j)}(H)\in \frac{1}{d+1}\mathbb{Z}$ for all $j$.
So $\sum_{j=1}^d |b_j(H)|\in \frac{1}{d+1}\mathbb{Z}$,
and this summation is strictly decreasing in all spanning subgraphs of type $0$.
This implies that in any considered sequence $(H_0, H_1, H_2, \ldots)$ of spanning subraphs of $G$,
there are at most $(d+1)dn+1$ spanning subgraphs of type $0$.
We can also derive from \eqref{equ:c^j} that between two subgraphs of type $0$, where no other subgraphs of type $0$ are in between,
there are at most $(d+1)n+1$ subgraphs of type $1$.
Similarly, between two subgraphs of type $i$, where no other subgraphs of type $j$ for some $j\leq i$ are in between, there are at most $(d+1)n+1$ subgraphs of type $i+1$.
Putting the above all together, we see that the length of any sequence $(H_0, H_1, H_2, \ldots)$ is at most
\begin{equation*}
	\big((d+1)dn + 1\big)\cdot \big((d+1)n+ 1\big)^{d-1} = O_d(n^d).
\end{equation*}
We have completed the proof of this lemma.
\end{proof}

With the two aforementioned lemmas in mind, we are now ready to provide the proof of Theorem~\ref{thm1}.

\begin{proof}[\bf Proof of Theorem~\ref{thm1} (assuming Lemma~\ref{lem:reduce}).]
Let $n>d\geq 2$ and $G$ be any $n$-vertex $d$-regular multigraph.
Our goal is to find a spanning subgraph $H$ of $G$ in time complexity $O_d(n^{d+1})$ with
$\norm{\mathbf{a}(H)}_{\infty}\leq 2d^2$.

By the discussion at the beginning of this section,
it suffices to find such $H$ with $\norm{\mathbf{b}(H)}_{\infty}\leq d^2$.
We make the following claim.

\medskip
{\noindent \bf Claim.} Given any spanning subgraph $H$ of $G$, either $\norm{\mathbf{b}(H)}_{\infty}\leq d^2$, or one can find an improvement of $H$ in $G$ using linear time $O_d(n)$.

\medskip

\noindent To prove this claim, we repeatedly use Lemma~\ref{lem:reduce}.
Let $M_t=td$ for integers $t\geq 0$.
Suppose that $\norm{\mathbf{b}(H)}_{\infty}> d^2= M_{d}$ (so there exists some $j_d\in [d]$ with $|b_{j_d}(H)|>M_d$).
By Lemma~\ref{lem:reduce}, either there exists some $j_{d-1}\in [d]$ with $|b_{j_{d-1}}(H)|\in (M_{d-1}, M_d]$,
or one can find an improvement of $H$ in $G$ using linear time $O_d(n)$.
In the latter case, this claim becomes valid.
So we may assume the former case occurs.
For every $d-1\geq i\geq 1$,
applying Lemma~\ref{lem:reduce} for $M_i$ in a similar matter, we can conclude that there exists some $j_{i-1}\in [d]$ with $|b_{j_{i-1}}(H)|\in (M_{i-1}, M_{i}]$ (as otherwise, one can find an improvement of $H$ in $G$ using linear time $O_d(n)$).
Putting these all together, we obtain $d+1$ indices $j_d, j_{d-1},\ldots,j_0\in [d]$ which are obviously all distinct.
This is a contradiction and thus proves the claim.

\medskip

Now using the above claim, we can achieve our goal via the following algorithm:
Initially choose any spanning subgraph $H_0$ of $G$.
If $\norm{\mathbf{b}(H_i)}_{\infty}\leq d^2$ for some $i\geq 0$, then we terminate;
otherwise, by the above claim one can find an improvement $H_{i+1}$ of $H_i$ in $G$ using linear time $O_d(n)$, and then we iterate this process by considering $H_{i+1}$.
In view of Lemma~\ref{num_op}, this algorithm must terminate in $O_d(n^d)$ iterations and thus terminate in time complexity $O_d(n^{d+1})$.
When it terminates at some spanning subgraph say $H^*$, it is clear that we meet the desired requirement $\norm{\mathbf{b}(H^*)}_{\infty}\leq d^2$,
finishing the proof of Theorem~\ref{thm1}.
\end{proof}

\medskip

In the remainder of this section, we complete the proof of Lemma~\ref{lem:reduce} through two steps. 
\begin{itemize}
\item[1.] Firstly, we demonstrate that certain specific multi-stars exist when the running spanning subgraph $H$ is far from being irregular (see Lemma~\ref{d+-}). 
\item[2.] Secondly, we establish that any multi-star obtained in the previous step can be utilized in a local adjustment to construct an improvement of the current running spanning subgraph (see Lemmas~\ref{lem:H-edge} and \ref{lem:multistar}). 
\end{itemize}
These two steps will be covered in Subsections~\ref{subsec:cons-multi-stars} and \ref{subsec:gene-impr}, respectively. 
The proof of Lemma~\ref{lem:reduce} will be presented at the end of this section.

\subsection{Constructing multi-stars}\label{subsec:cons-multi-stars}
In this subsection, we prove the existence of suitable multi-stars (with additional restrictions) whenever the spanning subgraph $H$ deviates from being irregular.

The coming lemma is mainly derived using a double-counting argument.

\begin{lemma}\label{ext}
Let $G$ be a $d$-regular multigraph and $H$ be its spanning subgraph.
Suppose $A, B$ are two disjoint subsets of $\{0,1,\ldots,d\}$.
Then the following hold:
\begin{itemize}
	\item [(a)] Suppose for any $i\in A$ and $j\notin B$, there is no $H(i,j)$-edge in $H$.
	Let $n_i, m_j\geq 1$ be integers for $i\in A$ and $j\in B$ such that $n_i\leq i$ and $\sum_{i\in A} n_i |V_i^H| > \sum_{j\in B} m_j |V_j^H|$.
	Then there exists a multi-star $S_{(k; \ell_1, \ldots, \ell_s;\alpha_1,\ldots,\alpha_s)}(H)$ with $m_k+1$ edges satisfying that $k\in B$, $\ell_q\in A$ and $1\leq \alpha_q\leq n_{\ell_q}$ for each $q\in [s]$.
	
	\item [(b)] Suppose for any $i\in A$ and $j\notin B$, there is no $H(i-1,j-1)$-edge in $G\backslash H$.
	Let $n_i, m_j\geq 1$ be integers for $i\in A$ and $j\in B$ such that $n_i \leq d-i+1$ and
	$\sum_{i\in A} n_i |V_{i-1}^H| > \sum_{j\in B} m_j |V_{j-1}^H|$. Then there exists a multi-star $\overline{S}_{(k-1; \ell_1-1, \ldots, \ell_s-1;\alpha_1,\ldots,\alpha_s)}(H)$ with $m_k+1$ edges satisfying that $k\in B$, $\ell_q\in A$ and $1\leq \alpha_q\leq n_{\ell_q}$ for each $q\in [s]$.
\end{itemize}
\end{lemma}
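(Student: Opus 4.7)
The plan is a weighted double-counting argument; I would carry it out in detail for (a) and then obtain (b) by the same argument applied inside $G\setminus H$ after shifting every degree index by one (which is why (b) asks for $n_i\le d-i+1$ in place of $n_i\le i$).

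For (a), set $V_A=\bigcup_{i\in A}V_i^H$ and $V_B=\bigcup_{j\in B}V_j^H$. Because $A\cap B=\emptyset$, the hypothesis forces every $H$-edge incident to $V_A$ to go between $V_A$ and $V_B$. For each multi-edge of $H$ between $v\in V_i^H$ and $u\in V_B$ of multiplicity $\beta_{uv}$, I would assign the weight $\min(\beta_{uv},n_i)$. The one nontrivial algebraic input is the elementary inequality $\min(\beta,n_i)\ge \beta\cdot n_i/i$, valid whenever $\beta\le i$ and $n_i\le i$; summing it over the distinct $H$-neighbours $u$ of a fixed $v\in V_i^H$ and using $\sum_u\beta_{uv}=d_H(v)=i$ gives total weight at least $n_i$ around $v$.

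Summing this estimate over $v\in V_i^H$ and then over $i\in A$, and swapping the order of summation, yields
\[
\sum_{u\in V_B}\ \sum_{\substack{v\in V_A\\ v\sim_H u}}\min(\beta_{uv},n_{d_H(v)})\ \ge\ \sum_{i\in A}n_i|V_i^H|\ >\ \sum_{j\in B}m_j|V_j^H|\ =\ \sum_{u\in V_B}m_{d_H(u)}.
\]
A pigeonhole step then produces some $u\in V_k^H$ with $k\in B$ around which the total weight is at least $m_k+1$. Taking its $H$-neighbours $v_q\in V_{\ell_q}^H$ with $\ell_q\in A$, together with $\alpha_q:=\min(\beta_{uv_q},n_{\ell_q})\in[1,n_{\ell_q}]$, gives a copy of $S_{(k;\ell_1,\ldots,\ell_s;\alpha_1,\ldots,\alpha_s)}(H)$ with at least $m_k+1$ edges; trimming parallel edges one at a time (and dropping a leaf only once all its copies are gone) cuts this down to exactly $m_k+1$ edges without violating any of the stated constraints.

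The main obstacle, and essentially the only delicate point, is choosing the right weight: the cap $n_i$ is what enforces $\alpha_q\le n_{\ell_q}$ in the final multi-star, while $\min(\beta,n_i)\ge \beta n_i/i$ is precisely what converts the total $H$-degree $i$ at $v$ into the contribution $n_i$ that the hypothesis measures against. For (b), the same argument runs inside $G\setminus H$, where the role of $V_i^H$ is played by $V_{i-1}^H=\{v:d_{G\setminus H}(v)=d-i+1\}$ and $i$ is replaced by $d-i+1$ throughout; this is exactly why the hypothesis concerns $H(i-1,j-1)$-edges of $G\setminus H$ and the constraint reads $n_i\le d-i+1$.
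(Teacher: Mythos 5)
Your proposal is correct and follows essentially the same route as the paper: a double-counting/pigeonhole comparison at the $B$-vertices, where the paper selects exactly $n_i$ incident $H$-edges at each vertex of $V_i^H$ ($i\in A$) and counts in-degrees of the resulting directed multigraph against the caps $m_j$, which plays the same role as your weights $\min(\beta_{uv},n_i)$ (and in both versions the cap by $n_i$ is what secures $\alpha_q\le n_{\ell_q}$). Part (b) is likewise handled in the paper by running the identical argument on the edges of $G\backslash H$, exactly as you indicate.
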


\begin{proof}
Consider (a) first. Let $i\in A$ and $x\in V_i^H$. Note that any edge of $H$ incident to $x$ must be incident to some vertex in $\cup_{j\in B} V_j^H$.
Since $1\leq n_i\leq i$, we can select $n_i$ edges of $H$ incident to $x$ and orient them to be directed edges with head $x$. Let $E_x$ denote the set of these directed edges. Let $D$ be the directed multigraph induced by the edge set $\cup_{i\in A}\cup_{x\in V_i^H} E_x$.
Then all directed edges of $D$ are oriented from $\cup_{i\in A} V_i^H$ to $\cup_{j\in B} V_j^H$, where each $x\in V_i^H$ for $i\in A$ has out-degree exactly $n_i$.
It is straightforward to see that if there exists some vertex $y\in V_k^H$ for some $k\in B$ with in-degree at least $m_k+1$ in $D$, then we can find a desired multi-star $S_{(k; \ell_1, \ldots, \ell_s;\alpha_1,\ldots,\alpha_s)}(H)$, where $\ell_q\in A$ and $1\leq \alpha_q\leq n_{\ell_q}$ for each $q\in [s]$.
So we may assume that any vertex $y\in V_j^H$ for $j\in B$ has in-degree at most $m_j$ in $D$. Then this leads to that $\sum_{i\in A} n_i |V_i^H| =e(D)\leq \sum_{j\in B} m_j |V_j^H|,$ a contradiction to the given condition.

The item (b) can be proved similarly, by considering the edges of $G\backslash H$ instead of $H$. Let $i\in A$ and $x\in V_{i-1}^H$. So there are $d-i+1$ edges of  $G\backslash H$ incident to $x$. Since $n_i\leq d-i+1$, among them we can select $n_i$ edges and orient these edges with head $x$. Again, let $E_x$ denote the set of these directed edges. Let $D'$ be the directed multigraph induced by the edge set $\cup_{i\in A}\cup_{x\in V_{i-1}^H} E_x$.
Using the same arguments for double-counting $e(D')$ as in the item $(a)$, we can finish the proof for the item $(b)$.
\end{proof}

We point out that the extra requirements $\alpha_q\leq n_{\ell_q}$ will be crucial in controlling the variations of irregularity vectors, while the practical meanings of the constants $n_i, m_j$ are revealed in Lemma~\ref{d+-}.
Next, we derive a special case for $d=3$ from Lemma \ref{ext}.

\begin{lemma}\label{find_edg}
Let $G$ be a cubic multigraph and $H$ be its spanning subgraph.
If $3a_3(H) > a_1(H) + 2a_2(H)$, then there is an $H(3,3)$-edge in $H$;
if $3a_0(H) > 2a_1(H) + a_2(H)$, then there is an $H(0,0)$-edge in $G\backslash H$.
\end{lemma}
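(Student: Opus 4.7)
My plan is to deduce both assertions from Lemma~\ref{ext} by specializing its parameters to $d = 3$, exploiting the identity $|V_k^H| = a_k(H) + n/4$ so that the constant $3n/4$ cancels symmetrically from both sides of the counting inequality.

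For the first assertion I will argue by contradiction: assume $H$ contains no $H(3,3)$-edge and apply Lemma~\ref{ext}(a) with $A = \{3\}$, $B = \{1, 2\}$, $n_3 = 3$, $m_1 = 1$, $m_2 = 2$. The prerequisite ``no $H(i,j)$-edge for $i \in A$, $j \notin B$'' holds because the only such $(i, j)$-pairs are $(3, 0)$ (impossible, since $V_0^H$ has no incident $H$-edges) and $(3, 3)$ (ruled out by assumption). The counting inequality $\sum_{i \in A} n_i |V_i^H| > \sum_{j \in B} m_j |V_j^H|$ becomes $3 |V_3^H| > |V_1^H| + 2 |V_2^H|$, which after cancelling $3n/4$ on both sides is exactly the hypothesis $3 a_3(H) > a_1(H) + 2 a_2(H)$. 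Lemma~\ref{ext}(a) then guarantees a multi-star $S_{(k; \ldots)}(H)$ with $m_k + 1$ edges whose center has $H$-degree $k \in \{1, 2\}$. Since $m_k + 1 = k + 1 > k$, such a multi-star cannot actually exist at a vertex of $H$-degree $k$, yielding the desired contradiction.

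For the second assertion I will apply Lemma~\ref{ext}(b) symmetrically, with $A = \{1\}$, $B = \{2, 3\}$, $n_1 = 3$, $m_2 = 2$, $m_3 = 1$. The required non-existence of $H(i-1, j-1)$-edges in $G \setminus H$ for $i \in A$, $j \notin B$ amounts to forbidding $H(0, 0)$-edges (our assumption) and $H(0, 3)$-edges (automatic, since $V_3^H$ has no incident $(G \setminus H)$-edges). The translated counting inequality $3 |V_0^H| > 2 |V_1^H| + |V_2^H|$ is equivalent, after the same $3n/4$ cancellation, to $3 a_0(H) > 2 a_1(H) + a_2(H)$. The multi-star $\overline{S}_{(k-1; \ldots)}(H)$ then claimed by the lemma has $m_k + 1$ edges at a center whose $(G \setminus H)$-degree equals $d - (k - 1) = 4 - k$; since $m_2 + 1 = 3 > 2$ and $m_3 + 1 = 2 > 1$, this exceeds the available degree at the center in both cases, a contradiction.

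I foresee no genuine obstacle: the only routine checks are that $n_3 \leq 3$ and $n_1 \leq 3 = d - i + 1$ conform to the size constraints in Lemma~\ref{ext}, that the counting inequalities translate cleanly to the stated hypotheses, and that the gaps $m_k + 1 > k$ (respectively $m_k + 1 > d - k + 1$) hold at every $k \in B$. A direct double-counting argument imitating the proof of Lemma~\ref{ext} would give an equally brief alternative, should one prefer to bypass the multi-star formalism.
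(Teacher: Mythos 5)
Your proposal is correct and follows essentially the same route as the paper: both specialize Lemma~\ref{ext} to $d=3$ with $A=\{3\}$ (resp.\ $A=\{1\}$) and rule out the guaranteed multi-stars because their $m_k+1$ edges exceed the center's available degree. The only cosmetic differences are that the paper takes $B=\{0,1,2\}$ with $m_j=j$ and deduces the second assertion from the first via the symmetry of Proposition~\ref{sym}, whereas you take $B=\{1,2\}$ (which in fact respects the $m_j\geq 1$ requirement more scrupulously) and invoke Lemma~\ref{ext}(b) directly; both variants are equally valid.
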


\begin{proof}
Suppose that $3a_3(H) > a_1(H) + 2a_2(H)$ and $H$ has no $H(3,3)$-edges.
Let $A = \{3\}$ and $B =\{0,1,2\}$ with $n_3=3$ and $m_j=j$ for each $j\in B$.
Then $3|V_3^H| = 3a_3(H) + 3n/4 > (a_1(H) + n/4) + (2a_2(H) + 2n/4) = \sum_{j\in B}m_j|V_j^H|$. By Lemma \ref{ext}, there must be an $H(0,3)$-edge in $H$, a multi-star consisting of two edges $uv_1, uv_2\in E(H)$ with $d_H(u)=1$, or a multi-star consisting of three edges $uv_1, uv_2, uv_3\in E(H)$ with $d_H(u)=2$.
But clearly these three types of subgraphs do not exist in $H$, a contradiction.
This proves the first conclusion.
To show the second conclusion, we only need to consider $G\backslash H$ and use Proposition \ref{sym}.
\end{proof}

The goal of this subsection is accomplished by the following lemma.
It asserts that under some mild assumptions on the irregularity vector, one can always find appropriate multi-stars for the use of local adjustments.

\begin{lemma}\label{d+-}
Let $G$ be a $d$-regular multigraph and $H$ be its spanning subgraph.
For any fixed constant $\alpha>0$, let $A^+ = \{i\in [d]: b_i(H) > \alpha\}$ and $A^- = \{i\in[d]: b_i(H) < -\alpha\}$.
\begin{itemize}
	\item For $i\in A^+$, let $n_i, m_i>0$ be the smallest integers such that
	$i+n_i\notin A^+$ and $i-m_i\notin A^+$;
	\item For $i\in A^-$, let $n_i, m_i>0$ be the smallest integers such that
	$i-n_i \notin A^-$ and $i + m_i \notin A^-$.
\end{itemize}
If both $A^+$ and $A^-$ are not empty, then there exists at least one of the following subgraphs:
\begin{itemize}
	\item [(\rom{1}).] an $(i,j)$-edge in $H$ with $i\in A^-$ and $j\notin A^+$;
	\item [(\rom{2}).] an $(i-1,j-1)$-edge in $G\backslash H$ with $i \in A^+$ and $j\notin A^-$;
	\item [(\rom{3}).] a multi-star $S_{(k; \ell_1, \ldots, \ell_s;\alpha_1,\ldots,\alpha_s)}(H)$ with $m_k+1$ edges with $k\in A^+$, $\ell_q\in A^-$ and $1\leq \alpha_q\leq n_{\ell_q}$ for each $q\in [s]$;
	\item [(\rom{4}).] a multi-star $\overline{S}_{(k-1; \ell_1-1, \ldots, \ell_s-1;\alpha_1,\ldots,\alpha_s)}(H)$ with $m_k+1$ edges with $k\in A^-$, $\ell_q\in A^+$ and $1\leq \alpha_q\leq n_{\ell_q}$ for each $q\in [s]$.
\end{itemize}
\end{lemma}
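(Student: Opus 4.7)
The plan is to argue by contradiction: assume that none of (I)--(IV) occurs, and combine Lemma~\ref{ext}(a) with Lemma~\ref{ext}(b) to derive impossible counting inequalities. The setup is clean because $b_0(H) = b_{d+1}(H) = 0$ lie outside $A^\pm$, which immediately gives the side conditions $n_i \leq i$ for $i \in A^-$ and $n_i \leq d-i+1$ for $i \in A^+$ required by Lemma~\ref{ext}. Moreover, $A^+$ (and similarly $A^-$) decomposes into maximal runs of consecutive integers $[p,q]$, and for any $i$ in such a run inside $A^+$ the definitions read $m_i = i-p+1$ and $n_i = q-i+1$; the roles of $n_i$ and $m_i$ are swapped for runs in $A^-$.

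I would first apply Lemma~\ref{ext}(a) with $A = A^-$ and $B = A^+$. The absence of outcome (I) is precisely the edge hypothesis, while the absence of (III) forces the contrapositive
\[
    \sum_{i\in A^-} n_i |V_i^H| \;\leq\; \sum_{j\in A^+} m_j |V_j^H|. \qquad (\star)
\]
A symmetric application of Lemma~\ref{ext}(b) with $A = A^+$ and $B = A^-$, using the absence of (II) and (IV), yields
\[
    \sum_{i\in A^+} n_i |V_{i-1}^H| \;\leq\; \sum_{j\in A^-} m_j |V_{j-1}^H|. \qquad (\star\star)
\]

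The crux is to show that $(\star)+(\star\star)$ cannot hold. Writing $|V_i^H| = a_i(H) + \tfrac{n}{d+1}$ and $a_i(H) = b_{i+1}(H) - b_i(H)$, within each run $[p,q]$ the identity $\sum_{i=p}^q[(i-p+1)-(q-i+1)] = 0$ makes the bulky $\tfrac{n}{d+1}$-contributions cancel. A short Abel summation then evaluates the net contribution of each negative run $[p,q]$ to the combined inequality as
\[
    (q-p+1)\bigl(b_{p-1}(H) + b_{q+1}(H)\bigr) - 2\sum_{i=p}^q b_i(H),
\]
while each positive run contributes the negative of the analogous expression. Since $b_i(H) < -\alpha$ throughout a negative run but $b_{p-1}(H), b_{q+1}(H) \geq -\alpha$ at its boundary (using $b_0 = b_{d+1} = 0$), each negative run's contribution is strictly positive; by symmetry, each positive run's contribution is strictly positive as well. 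As both $A^+$ and $A^-$ are nonempty, summing over all runs produces a strictly positive total, contradicting the nonpositive bound obtained by adding $(\star)$ and $(\star\star)$.

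The main obstacle I anticipate is the Abel-summation bookkeeping: one must check that the linear-in-$n$ terms really cancel exactly within each run, and that the boundary values $b_{p-1}, b_{q+1}$ are controlled with the correct signs when $p=1$ or $q=d$ (where $b_0 = b_{d+1} = 0$ still satisfies $|b_{p-1}|, |b_{q+1}| \leq \alpha$). Once these routine verifications are in place, the strict positivity per run, and hence the contradiction, follow immediately.
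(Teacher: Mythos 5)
Your proposal is correct and follows essentially the same route as the paper: assume none of (I)--(IV) holds, apply Lemma~\ref{ext}(a) with $A=A^-$, $B=A^+$ and Lemma~\ref{ext}(b) with $A=A^+$, $B=A^-$, add the resulting inequalities, and show the combined quantity is strictly positive by evaluating it run by run (using $\sum(n_i-m_i)=0$ within each run to cancel the $\tfrac{n}{d+1}$ terms and the fact that $|b|\le\alpha$ at the run boundaries, including $b_0=b_{d+1}=0$). The per-run expression you obtain matches the paper's computation of $T$, so the verification you flag as the remaining obstacle is exactly the routine calculation carried out there.
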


\begin{proof}
Consider any interval $\{j,j+1,\ldots,k\}$ in $A^+$ with $j-1,k+1\notin A^+$.
Then by definition for any $j\leq i\leq k$, $n_i=k-i+1$ and $m_i=i-j+1$. Thus we have
\begin{align*}
	\sum_{i=j}^k (n_i-m_i)=\sum_{i=j}^k ((k-i+1) - (i-j+1))=\sum_{i=j}^k(k-2i+j)=0
\end{align*}
This can be easily generalized to the following	
\begin{equation}\label{n_i-m_i}
	\sum_{i\in A^+} (n_i-m_i) =0 \mbox{~~ and ~ similarly ~~} \sum_{i\in A^-} (n_i-m_i) =0
\end{equation}
Suppose for a contradiction that all items (\rom{1})-(\rom{4}) do not occur.
Then by Lemma~\ref{ext}, we have
\begin{equation}\label{ineq1}
	\sum_{i\in A^-} n_i|V_i^H|\leq \sum_{j\in A^+}m_j|V_j^H|
	\mbox{ ~~ and ~~ }
	\sum_{j\in A^+} n_j|V_{j-1}^H|\leq \sum_{i\in A^-} m_i |V_{i-1}^H|.
\end{equation}
Adding these two inequalities up, we can derive that
\begin{equation}\label{M}
	M:=\sum_{i \in A^-}(n_i|V_i^H| - m_i |V_{i-1}^H|) + \sum_{i\in A^+} (n_i|V_{i-1}^H| - m_i|V_i^H|)\leq 0
\end{equation}
In what follows, we will show that both above terms in $M$ are positive.
Therefore, $M$ is positive, which is a contradiction to \eqref{M} and thus completes the proof.

Let us first show that the second term is positive.
To show this, it suffices to show that for any interval $\{j,j+1,\ldots,k\}$ in $A^+$ with $j-1,k+1\notin A^+$, we have
$$T:=\sum_{i=j}^k \left(n_i|V_{i-1}^H| - m_i|V_i^H|\right)>0.$$
Indeed, by performing a detail calculation, we can verify this as follows.
Write $b_i:=b_i(H)$. Then $b_{i+1}-b_i=a_i(H)=|V_i^H|-\frac{n}{d+1}$.
Therefore,
\begin{align*}
	T &= \sum_{i=j}^k \left(n_i\big(b_i-b_{i-1}+\frac{n}{d+1}\big)-m_i\big(b_{i+1}-b_i+\frac{n}{d+1}\big)\right) \\
	&= \sum_{i=j}^k \bigg(n_i(b_i-b_{i-1})-m_i(b_{i+1}-b_i)\bigg)+\frac{n}{d+1}\cdot \sum_{i=j}^k (n_i-m_i)\\
	&= \sum_{i=j}^k \bigg( (k-j+2)b_i-(i-j+1)b_{i+1}-(k-i+1)b_{i-1}\bigg)\\
	&= 2(b_j+b_{j+1}+\ldots+b_{k}) -(k-j+1) (b_{k+1}+b_{j-1})> 0,
\end{align*}
where the third equation holds because we have \eqref{n_i-m_i}, $n_i=k-i+1$ and $m_i=i-j+1$ for any $j\leq i\leq k$, and the last inequality follows by the fact that $j-1, k+1\notin A^+$.

The proof of $\sum_{i \in A^-}(n_i|V_i^H| - m_i |V_{i-1}^H|)>0$ can be derived analogously. The only difference is to observe that for an interval $\{j,j+1,\ldots,k\}$ in $A^-$ with $j-1,k+1\notin A^-$, we have $n_i=i-j+1$ and $m_i=k-i+1$ for any $j\leq i\leq k$.
\end{proof}

To conclude this section, we would like to remark that there exists an algorithm (i.e., the greedy algorithm) with linear time complexity $O_d(|V(G)|)$ to compute $A^+, A^-$ as well as to find one of subgraphs listed in the four items of Lemma~\ref{d+-} (if it exists).

\subsection{Generating improvements from multi-stars}\label{subsec:gene-impr}
In the subsequent two lemmas, we show how the four subgraphs indicated by  Lemma~\ref{d+-} (under some mild conditions) help generating improvements of spanning subgraphs.

\begin{lemma}\label{lem:H-edge}
Let $G$ be a $d$-regular multigraph and $H$ be its spanning subgraph.
Then the following statements hold:
\begin{enumerate}[(1)]
	\item If there exists an $H(i,j)$-edge $e$ in $H$ with $b_i(H)<-1$ and $b_i(H)+b_j(H)<-1$,
	then $H-e$ is an improvement of $H$ in $G$;
	
	\item If there exists an $H(i-1,j-1)$-edge $e'$ in $G\backslash H$ with $b_i(H)>1$ and $b_i(H)+b_j(H)>1$,
	then $H+e'$ is an improvement of $H$ in $G$.
\end{enumerate}
\end{lemma}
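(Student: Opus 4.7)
The plan is to verify each statement by applying Lemma~\ref{change_b} to compute the updated irregularity vector explicitly, and then checking which of the two criteria in Definition~\ref{Dfn:improv} is satisfied. For (1), Lemma~\ref{change_b}(1) gives $\mathbf{b}(H - e) = \mathbf{b}(H) + \mathbf{e}_i + \mathbf{e}_j$, so only the $i$-th and $j$-th coordinates of $\mathbf{b}$ are affected (and only $b_i$ is affected, by $+2$, when $i = j$). For (2), Lemma~\ref{change_b}(2) supplies the analogous identity $\mathbf{b}(H + e') = \mathbf{b}(H) - \mathbf{e}_i - \mathbf{e}_j$.

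I would handle (1) by splitting on whether $i = j$ and on the sign of $b_j(H)$. If $i = j$, the hypothesis $b_i(H) < -1$ alone forces $|b_i(H - e)| = |b_i(H) + 2| < |b_i(H)|$ by a short case check (separating $b_i(H) \leq -2$ from $-2 < b_i(H) < -1$). If $i \neq j$ and $b_j(H) < 0$, a direct case check using $b_i(H) < -1$ yields $|b_i(H-e)| + |b_j(H-e)| < |b_i(H)| + |b_j(H)|$. Hence in all these subcases $\sum_k |b_k|$ strictly decreases, and criterion~(A) of Definition~\ref{Dfn:improv} is satisfied.

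The delicate subcase is $i \neq j$ with $b_j(H) \geq 0$, in which $|b_i|$ drops by $1$ while $|b_j|$ rises by $1$, so $\sum_k |b_k|$ is preserved and one must instead verify criterion~(B). This is the main obstacle of the argument. From $b_j(H) \geq 0$ together with $b_i(H) + b_j(H) < -1$ one obtains $|b_i(H)| > |b_j(H)| + 1$ strictly, so the two replacement values $|b_i(H)| - 1$ and $|b_j(H)| + 1$ are both strictly greater than $|b_j(H)|$. Thus the multiset $\{|b_k(H - e)|\}_{k=1}^d$ arises from $\{|b_k(H)|\}_{k=1}^d$ by deleting exactly one copy of $|b_j(H)|$ (with another entry that equalled $|b_i(H)|$ being changed to a value strictly larger than $|b_j(H)|$). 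Letting $t^*$ denote the number of entries of the old multiset that are $\leq |b_j(H)|$, I would verify that the sorted sequences $\mathbf{C}(H)$ and $\mathbf{C}(H - e)$ coincide in positions $1, \ldots, t^* - 1$, while the $t^*$-th entry of $\mathbf{C}(H)$ equals $|b_j(H)|$ and the $t^*$-th entry of $\mathbf{C}(H - e)$ is strictly greater. This gives $\mathbf{C}(H) < \mathbf{C}(H - e)$ in the lexicographic order, confirming~(B).

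For (2), I would appeal to Proposition~\ref{sym}: the identity $b_k(G \backslash H) = -b_{d+1-k}(H)$ implies that the multisets $\{|b_k(G \backslash H)|\}$ and $\{|b_k(H)|\}$ are equal, so the sum $\sum_k |b_k|$ and the vector $\mathbf{C}$ are both invariant under the complementation $H \mapsto G \backslash H$. Consequently, ``$H + e'$ is an improvement of $H$ in $G$'' is equivalent to ``$(G \backslash H) - e'$ is an improvement of $G \backslash H$ in $G$'', since $H + e'$ and $(G \backslash H) - e'$ are complements in $G$. Because $e' \in G \backslash H$ is an $H(i-1, j-1)$-edge, it is a $(G \backslash H)(d - i + 1, d - j + 1)$-edge in $G \backslash H$, and the hypotheses $b_i(H) > 1$ and $b_i(H) + b_j(H) > 1$ translate via Proposition~\ref{sym} precisely into $b_{d-i+1}(G \backslash H) < -1$ and $b_{d-i+1}(G \backslash H) + b_{d-j+1}(G \backslash H) < -1$. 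Applying part~(1) to $G \backslash H$ with the edge $e'$ then yields the conclusion of~(2).
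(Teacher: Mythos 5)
Your proposal is correct and follows essentially the same route as the paper: apply Lemma~\ref{change_b} to get $\mathbf{b}(H-e)=\mathbf{b}(H)+\mathbf{e}_i+\mathbf{e}_j$, split on the sign of $b_j(H)$ (criterion (A) when $b_j(H)<0$, criterion (B) via the lexicographic comparison when $b_j(H)\geq 0$), and deduce (2) from (1) by the complementation symmetry of Proposition~\ref{sym}. Your treatment of the sorted-multiset comparison in the delicate subcase is just a more detailed write-up of the step the paper dispatches with ``we can easily derive that $\mathbf{C}(H)<\mathbf{C}(H')$.''
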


\begin{proof}
Consider (1). Let $e\in E(H)$ be an $H(i,j)$-edge with $b_i(H)<-1$ and $b_i(H)+b_j(H)<-1$. Let $H'=H-e$.
By Lemma~\ref{change_b}, we have $\mathbf{b}(H') = \mathbf{b}(H) + \mathbf{e}_i + \mathbf{e}_j$.
Let us first assume that $b_j(H) < 0$.
If $i\neq j$, then it is clear that $\sum_{k=1}^d|b_k(H')| = \sum_{k=1}^d|b_k(H)| - 1 + (|b_j(H)+1| - |b_j(H)|) < \sum_{k=1}^d|b_k(H)|$;
otherwise $i=j$, then $b_i(H)=b_j(H) < -1$
and we also have $\sum_{k=1}^d|b_k(H')| = \sum_{k=1}^d|b_k(H)| + (|b_i(H) + 2| - |b_i(H)|) < \sum_{k=1}^d|b_k(H)|$.
This shows that $H'$ is an improvement of $H$ (with the property (A)).
It remains to consider when $b_j(H)\geq 0$.
In this case, we have $b_i(H')=b_i(H) + 1 <-b_j(H)\leq 0$ and $b_j(H') = b_j(H) + 1$.
Hence $\sum_{k=1}^d|b_k(H')| = \sum_{k=1}^d|b_k(H)|$, but both $|b_i(H')|$ and $|b_j(H')|$ are strictly bigger than $|b_j(H)|$.
Since all entries of $\mathbf{b}(H')$, except the $i$'th and $j$'th entries, remain the same as $\mathbf{b}(H)$,
we can easily derive that $\mathbf{C}(H) < \mathbf{C}(H')$ in the lexicographic order.
So $H'$ is an improvement of $H$ (with the property (B)), finishing the proof for (1).

To see (2), let $F=G\backslash H$. We observe that any $H(i-1,j-1)$-edge $e'$ in $F$ is also an $F(d-i+1,d-j+1)$-edge.
By Proposition~\ref{sym}, we also have $b_{d-i+1}(F)=-b_i(H)<-1$ and $b_{d-i+1}(F)+b_{d-j+1}(F)=-b_i(H)-b_j(H)<-1$.
Applying (1) with respect to $e'$ and $F$,
we conclude that $F-e'$ is an improvement of $F$.
By Proposition~\ref{sym} again, we observe that $\mathbf{C}(G\backslash G')=\mathbf{C}(G')$ holds for any subgraph $G'$ of $G$.
This fact implies that
$H+e'=G\backslash (F-e')$ is an improvement of $H=G\backslash F$. We have completed the proof of this lemma.
\end{proof}

\begin{lemma}\label{lem:multistar}
Let $d\geq 2$. Let $G$ be a $d$-regular multigraph and $H$ be its spanning subgraph. For a fixed constant $\alpha\geq d$, let $A^+, A^-$ and $n_i,m_i$ for $i\in A^+\cup A^-$ be defined the same as in Lemma~\ref{d+-}.
Then the following statements hold:
\begin{enumerate}[(1)]
	\item Assume that there exists a multi-star $S:=S_{(k; \ell_1, \ldots, \ell_s;\alpha_1,\ldots,\alpha_s)}(H)$ with $m_k+1$ edges with $k\in A^+$, $\ell_q\in A^-$ and $1\leq \alpha_q\leq n_{\ell_q}$ for each $q\in [s]$.
	If $b_{k-m_k}(H)\leq \alpha-d$,
	then $H-S$ is an improvement of $H$ in $G$;
	
	\item Assume that there exists a multi-star $\overline{S}:=\overline{S}_{(k-1; \ell_1-1, \ldots, \ell_s-1;\alpha_1,\ldots,\alpha_s)}(H)$ with $m_k+1$ edges with $k\in A^-$, $\ell_q\in A^+$ and $1\leq \alpha_q\leq n_{\ell_q}$ for each $q\in [s]$.
	If $b_{k+m_k}(H)\geq -(\alpha -d)$,
	then $H+\overline{S}$ is an improvement of $H$ in $G$.
\end{enumerate}
\end{lemma}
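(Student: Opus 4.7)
The plan is to apply Lemma~\ref{change_b}(3) to $H' := H - E(S)$, identify how $\mathbf{b}(H')$ differs from $\mathbf{b}(H)$, and track the resulting change in $\sum_j|b_j|$ and in the sorted vector $\mathbf{C}$. By Lemma~\ref{change_b}(3),
\[
\mathbf{b}(H')-\mathbf{b}(H) \;=\; \sum_{i=1}^{m_k+1}\mathbf{e}_{k+1-i} \;+\; \sum_{q=1}^{s}\sum_{j=1}^{\alpha_q}\mathbf{e}_{\ell_q+1-j}.
\]
The center adds $+1$ to each of the coordinates $k, k-1, \ldots, k-m_k$; by the definition of $m_k$, the first $m_k$ of these lie in $A^+$, while the last, $k-m_k$, lies outside $A^+$. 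The leaves add $+1$ to $m_k+1$ coordinates that all lie in $A^-$ thanks to $\alpha_q\le n_{\ell_q}$. Since $b_j(H)>\alpha>0$ on $A^+$ and $b_j(H)<-\alpha<0$ on $A^-$ (where $b_j(H)+1$ remains negative because $\alpha\ge d\ge 2$), each $+1$ on an $A^+$ coordinate raises $|b_j|$ by $1$ and each $+1$ on an $A^-$ coordinate lowers $|b_j|$ by $1$. Hence
\[
\sum_j|b_j(H')| - \sum_j|b_j(H)| \;=\; m_k - (m_k+1) + \Delta \;=\; -1 + \Delta,
\]
where $\Delta := |b_{k-m_k}(H)+1|-|b_{k-m_k}(H)|$ captures the change at the boundary coordinate $k-m_k$.

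I will then split on the sign of $b_{k-m_k}(H)$. If $b_{k-m_k}(H)<0$, then $\Delta<1$, so $\sum_j|b_j|$ strictly decreases and $H'$ is a type-(A) improvement. (The sub-case $k-m_k\in A^-$, where the boundary coordinate is hit by both center and a leaf, is absorbed into the $A^-$ tally and yields an even larger strict decrease.) If instead $0\le b_{k-m_k}(H)\le \alpha-d$, then $\Delta=1$, the sum is preserved, and I must verify the lex condition (B). The key observation is that $b_{k-m_k}(H)\le \alpha-d$ together with $d\ge 2$ forces $|b_{k-m_k}(H')| = |b_{k-m_k}(H)|+1 \le \alpha-d+1 \le \alpha-1$, whereas every other affected coordinate satisfies $|b_j(H')|>\alpha-1$ (the $A^+$ ones exceed $\alpha+1$, the $A^-$ ones exceed $\alpha-1$). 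Hence for every threshold $v<|b_{k-m_k}(H)|$ the cumulative counts $|\{i : |b_i(\cdot)|\le v\}|$ agree for $H$ and $H'$ (being determined entirely by the unchanged coordinates), whereas at $v=|b_{k-m_k}(H)|$ the $H$-count strictly exceeds the $H'$-count. This is exactly $\mathbf{C}(H)<\mathbf{C}(H')$ in lex order, so $H'$ is a type-(B) improvement.

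For part (2), I will invoke Proposition~\ref{sym}: since $H+E(\overline{S}) = G\setminus\bigl((G\setminus H)-E(\overline{S})\bigr)$, the claimed operation is exactly the deletion of a multi-star in the complement $G\setminus H$, and under the identification $b_i(G\setminus H)=-b_{d+1-i}(H)$ all hypotheses of part (2) (with $k\in A^-$, $\ell_q\in A^+$, and the boundary bound $b_{k+m_k}(H)\ge -(\alpha-d)$) translate, coordinate-by-coordinate, to the hypotheses of part (1) applied to $G\setminus H$; since improvements are preserved under complementation (the multiset $\{|b_i|\}$ is invariant), the conclusion transfers back to $H$. The main obstacle is Case (B) above: because $\sum_j|b_j|$ is merely preserved, the improvement must be witnessed in lex order, and this requires the precise gap estimate $|b_{k-m_k}(H)|+1\le \alpha-1$---exactly where the hypotheses $b_{k-m_k}(H)\le \alpha-d$ and $d\ge 2$ are both consumed---to ensure that the boundary coordinate $k-m_k$ stays uniquely at the bottom of the affected coordinates in $\mathbf{C}(H')$.
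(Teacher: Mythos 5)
Your approach is the same as the paper's: apply Lemma~\ref{change_b}(3), split the increments into the $m_k$ center increments landing on $A^+$, the $m_k+1$ leaf increments landing on $A^-$, and the single boundary increment at $k-m_k$; then case on the sign of $b_{k-m_k}(H)$, and when $\sum_j|b_j|$ is preserved verify $\mathbf{C}(H)<\mathbf{C}(H')$ by the cumulative-count comparison at the threshold $|b_{k-m_k}(H)|$. Handling part (2) by complementation via Proposition~\ref{sym} is likewise in the spirit of the paper, which uses exactly this trick for Lemma~\ref{lem:H-edge}(2) and omits (2) here as ``similar''.

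One step is stated incorrectly, though the fix is local. A coordinate $j\in A^-$ can be hit by several leaf increments simultaneously (distinct leaves may have equal or overlapping degrees $\ell_q$), up to $s\le m_k+1\le |A^+|+1\le d$ of them. Hence your claim that every affected $A^-$ coordinate satisfies $|b_j(H')|>\alpha-1$ is false in general; the correct bound is $|b_j(H')|\ge |b_j(H)|-s>\alpha-d$. (Similarly, ``each $+1$ on an $A^-$ coordinate lowers $|b_j|$ by $1$'' requires noting that the total number of hits on a single coordinate is at most $d\le\alpha<|b_j(H)|$, so the running value stays negative; your parenthetical only justifies one increment.) This also misplaces where the hypothesis $b_{k-m_k}(H)\le\alpha-d$ is consumed: it is needed precisely so that $|b_{k-m_k}(H)|\le\alpha-d$ is dominated by the worst-case drop of the $A^-$ coordinates, not to push the boundary coordinate below $\alpha-1$. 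With the corrected bound your cumulative-count argument goes through verbatim: all affected coordinates of $H'$ exceed $\alpha-d\ge |b_{k-m_k}(H)|$ and all affected coordinates of $H$ other than $k-m_k$ exceed $\alpha$, so counts agree below the threshold and drop by one at it. In particular the boundary coordinate need not end up ``uniquely at the bottom'' of the affected values, as your closing remark suggests; it only has to rise strictly above its old value while no affected coordinate falls to or below that old value. This corrected estimate is exactly the one the paper uses.
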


\begin{proof}
Suppose there exists a multi-star $S$ with $m_k+1$ edges and satisfying other conditions of (1).
Let $H'=H-S$.
By Lemma~\ref{change_b}, we have
$\mathbf{b}(H') = \mathbf{b}(H) +\mathbf{c}_1+\mathbf{c}_2+\mathbf{c}_3$,
where
$$\mathbf{c}_1=\sum_{i=1}^{m_k}\mathbf{e}_{k + 1 - i}, ~~ \mathbf{c}_2= \sum_{q=1}^s \sum_{j=1}^{\alpha_q} \mathbf{e}_{\ell_i+1-j}, ~~ \mbox{ and } ~~\mathbf{c}_3=\mathbf{e}_{k-m_k}.$$
By definition, we see that since $k\in A^+$,
we have $k, k-1,\ldots,k+1-m_k\in A^+$ but $k-m_k\notin A^+$;
since $\ell_q\in A^-$ and $1\leq \alpha_q\leq n_{\ell_q}$ for each $q\in [s]$,
we have $\ell_q, \ell_q-1,\ldots,\ell_q+1-\alpha_q \in A^-$.
These facts imply that after adding $\mathbf{c}_1$, the value of $\sum_{i=1}^d |b_i(\cdot)|$ increases by $m_k$, while after adding $\mathbf{c}_2$, the value of $\sum_{i=1}^d |b_i(\cdot)|$ decreases by $\sum_{q=1}^s \alpha_q=m_k+1$. 
Let $\mathbf{b}' = \mathbf{b}(H) + \mathbf{c}_1 + \mathbf{c}_2$, then 
$\sum_{i=1}^d|b'_i|=\sum_{i=1}^d|b_i(H)|+m_k-(m_k+1)=\sum_{i=1}^d|b_i(H)|-1$.
Note that $\mathbf{b}(H') = \mathbf{b}' + \mathbf{c}_3=\mathbf{b}'+\mathbf{e}_{k-m_k}$.

First we consider $k-m_k\in A^-$. 
Since 
$s \leq m_k+1 \leq |A^+|+1 \leq d$, we have
$b'_{k-m_k}\leq b_{k-m_k}(H)+s<-\alpha+d\leq 0$. 
Then
$$\sum_{i=1}^d|b_i(H')|=\sum_{i=1}^d|b'_i|+(|b'_{k-m_k}+1|-|b'_{k-m_k}|)<(\sum_{i=1}^d|b_i(H)|-1)+1=\sum_{i=1}^d|b_i(H)|,$$
implying that $H'$ is an improvement of $H$ (with property (A)).

Now we may assume that $k-m_k \notin A^-$, in this case $b'_{k-m_k} = b_{k-m_k}(H)$. If $b_{k-m_k}(H)<0$, then we can derive from the previous analysis that 
$$\sum_{i=1}^d |b_i(H')| <\sum_{i=1}^d |b_i(H)| ~~~~ \mbox{i.e., $H'$ is an improvement of $H$ (with property (A))}.$$
Otherwise $0\leq b_{k-m_k}(H)\leq \alpha-d$. 
In this case, we have $\sum_{i=1}^d |b_i(H')|=\sum_{i=1}^d |b_i(H)|$. 
Next we show that for all $j\in [d]$ for which $b_j(H')$ has been updated differently, it satisfies that
$$|b_j(H')|> |b_{k-m_k}(H)|.$$
This is clear for $j=k-m_k$ and for those $j\in A^+$; 
for those $j\in A^-$, since $s\leq m_k+1\leq d$,
we have $|b_j(H')|\geq |b_j(H)|-s>\alpha -d\geq |b_{k-m_k}(H)|$ as desired.
Hence, $\mathbf{C}(H) < \mathbf{C}(H')$ holds in the lexicographic order, i.e., $H'$ is an improvement of $H$ (with property (B)).
This proves the item (1).
The proof for the item (2) can be derived similarly so we omit here.
\end{proof}

Combining the above two lemmas with Lemma~\ref{d+-}, 
we are able to prove the key lemma for Theorem~\ref{thm1} as follows.

\begin{proof}[\bf Proof of Lemma~\ref{lem:reduce}.]
We assume that there is no $j\in [d]$ with $|b_j(H)|\in (M-d, M]$, and aim to show the existence of an improvement of $H$ in $G$.
The running time $O_d(n)$ will be discussed at the end of the proof.

By symmetry between $H$ and $G\backslash H$, we may assume that there exists some $i\in [d]$ with $b_i(H)> M$.
Set $\alpha:=M$.
Let $A^+=\{i\in[d]: b_i(H)>\alpha\}$ and
$A^-=\{i\in [d]: b_i(H)<-\alpha\}$.
So $A^+$ is nonempty.
Let $A^+ = \bigcup_{s=1}^\ell\{j_s,j_s+1,\ldots, k_s\}$, where each $k_s\leq j_{s+1}-2$. Then
\begin{equation*}
	\sum_{i \in A^+} |V_{i-1}(H)| = \sum_{i\in A^+} \big(b_{i}(H) - b_{i-1}(H)\big) + |A^+|\cdot n/(d + 1) >\sum_{s=1}^\ell (b_{k_s}(H)-b_{j_s-1}(H)) > 0.
\end{equation*}
This says, there exists a vertex of degree $i-1$ in $H$ for some $i\in A^+$.
Then there must be some $H(i-1,j-1)$-edge $e$ in $G\backslash H$ for some $i\in A^+$ and $j\in [d]$.
By Lemma~\ref{lem:H-edge}, since $b_i(H)>M>1$, if $b_i(H)+b_j(H)>1$, then $H+e$ is an improvement of $H$ in $G$.
So we may assume that $b_j(H)\leq -b_i(H)+1<-(M-1)$. 

If $-M\leq b_j(H) < -(M-1)$, then $|b_j(H)|\in (M-d,M]$, a contradiction of our assumption.
So we may assume that $b_j(H) < -M$, which implies that $A^-$ is also nonempty. By Lemma~\ref{d+-} (with $\alpha=M$),
there exists one of the four subgraphs listed from (\rom{1}) to (\rom{4}), denoted by $F$.
Suppose this subgraph $F$ is from (\rom{1}), i.e., an $H(i,j)$-edge $e^*$ in $H$ with $i\in A^-$ and $j\notin A^+$.
By our assumption that there is no $j\in [d]$ with $|b_j(H)|\in (M-d, M]$,
the fact that $j\notin A^+$ implies that $b_j(H)\leq M-d$.
Hence, we have $b_i(H)<-M<-1$ and $b_i(H)+b_j(H)<-M+(M-d)=-d\leq -1$.
By Lemma~\ref{lem:H-edge} (1), we see $H-e^*$ is an improvement of $H$ in $G$.
Similarly, if $F$ is from (\rom{2}), then one can use Lemma~\ref{lem:H-edge} (2) to find an improvement of $H$ in $G$.

Now suppose the subgraph $F$ is from (\rom{3}), i.e., a multi-star $S_{(k; \ell_1, \ldots, \ell_s;\alpha_1,\ldots,\alpha_s)}(H)$ with $m_k+1$ edges with $k\in A^+$, $\ell_q\in A^-$ and $1\leq \alpha_q\leq n_{\ell_q}$ for each $q\in [s]$.
Note that by definition, $k-m_k\notin A^+$,
implying that $b_{k-m_k}(H)\leq M-d=\alpha-d$, where $\alpha=M\geq d$. By Lemma~\ref{lem:multistar} (1),
we see $H-F$ is an improvement of $H$ in $G$.
It remains to consider the case when $F$ is from (\rom{4}) of Lemma~\ref{d+-}.
By similar arguments, we can use Lemma~\ref{lem:multistar} (2) to conclude that $H+F$ is an improvement of $H$ in $G$.
This proves the existence of an improvement of $H$.

By the remark after Lemma~\ref{d+-},
it takes linear time $O_d(n)$ to compute $A^+, A^-$ as well as to find one of subgraphs listed in the four items of Lemma~\ref{d+-} (if it exists).
Since all improvements of $H$ found in this proof are defined using $H$ and these subgraphs solely, it is evident that one can find such an improvement of $H$ using linear time $O_d(n)$.
\end{proof}

\section{Cubic multigraphs: Theorem~\ref{thm2}}\label{Sec:thm2}
In this section, we prove Theorem~\ref{thm2}, by showing that any cubic multigraph $G$ on $n$ vertices contains a spanning subgraph $H$ satisfying that $\norm{\mathbf{a}(H^*)}_{\infty}\leq 2$.
In fact we shall prove a slightly stronger statement (see Theorem~\ref{thm2:stronger}).

Throughout this section,
let $\mathcal{G}$ be the family of all cubic multigraphs.
Let $K_2^k$ denote the multigraph consisting of $k$ parallel edges between two fixed vertices.
In particular, $K_2^3\in \mathcal{G}$.

\subsection{Characterization and proof outline of cubic multigraphs}\label{subsec:char-cubic}
In this subsection, our objective is to establish a comprehensive structural description of all cubic multigraphs (refer to Lemma~\ref{cons}). 
Subsequently, we provide a concise proof outline of Theorem~\ref{thm2} based on this characterization.

The structural lemma states that any cubic multigraph can be generated from $mK_2^3$ (i.e., the disjoint union of $m$ copies of $K_2^3$'s) for some $m\geq 1$,
through a series of elementary operations which we define as follows
(also illustrated by Figure~\ref{Fig:operations}).

\begin{definition}\label{Def:cubic}
Let $G,G'\in \mathcal{G}$ satisfy that $|V(G')|=|V(G)|+2$.
We say that $G'$ is {\it generated} from $G$, denoted by $G \hookrightarrow G'$, if one of the following holds:
\begin{itemize}
	\item[(I).] There exists an edge $xy\in E(G)$ such that $G'$ is obtained from $G$ by removing the edge $xy$, introducing two new vertices $u$ and $v$, and adding the edges $xu$ and $vy$, along with two parallel edges between $u$ and $v$.
	\item[(II).] There exist two edges $xy, zw\in E(G)$ such that $G'$ is obtained from $G$ by removing the edges $xy$ and $zw$, introducing two new vertices $u$ and $v$, and adding the edges $xu, uy, zv, vw$ and $uv$.
\end{itemize}
\end{definition}

\begin{figure}[H]
\begin{center}
	\begin{tikzpicture}[thin][node distance=1cm,on grid]
		\node[circle,inner sep=0.5mm,draw=black,fill=black](x5)at(-3.0,0) [label=above:$x$] {};
		\node[circle,inner sep=0.5mm,draw=black,fill=black](x6)at(-1.4,0) [label=above:$y$] {};
		\node[circle,inner sep=0.5mm,draw=black,fill=black](x1)at(-0.2,0) [label=above:$x$] {};
		\node[circle,inner sep=0.5mm,draw=black,fill=black](x2)at(2.6,0) [label=above:$y$] {};
		\node[circle,inner sep=0.5mm,draw=black,fill=black](x3)at(1.2,-0.6) [label=below:$v$] {};
		\node[circle,inner sep=0.5mm,draw=black,fill=black](x4)at(1.2,0.6) [label=above:$u$] {};
		\draw[draw,thick,color=blue] (1.2,0.6) arc (150:210:1.2);
		\draw[draw,thick,color=blue] (1.2,0.6) arc (30:-30:1.2);
		\draw[thick,color = blue](x1)to(x4);
		\draw[thick,color = blue](x2) to (x3);
		\draw[draw,thick,color=orange] (-1.4,0) arc (60:120:1.6);
		\draw[->, thick] (-1.0,0) to (-0.6,0);
		\draw(-0.8,-1.4) node {Type (I)};
	\end{tikzpicture}
	\hspace{1.5cm}
	\begin{tikzpicture}[thin][node distance=1cm,on grid]
		\node[circle,inner sep=0.5mm,draw=black,fill=black](x1)at(-1.2,1/2) [label=above:$x$] {};
		\node[circle,inner sep=0.5mm,draw=black,fill=black](x2)at(1.2,1/2) [label=above:$y$] {};
		\node[circle,inner sep=0.5mm,draw=black,fill=black](x3)at(-1.2,-1/2) [label=below:$z$] {};
		\node[circle,inner sep=0.5mm,draw=black,fill=black](x4)at(1.2,-1/2) [label=below:$w$] {};
		\node[circle,inner sep=0.5mm,draw=black,fill=black](x5)at(0,1/2) [label=above:$u$] {};
		\node[circle,inner sep=0.5mm,draw=black,fill=black](x6)at(0,-1/2) [label=below:$v$] {};
		\node[circle,inner sep=0.5mm,draw=black,fill=black](x7)at(-4.2,1/2) [label=above:$x$] {};
		\node[circle,inner sep=0.5mm,draw=black,fill=black](x8)at(-2.6,1/2) [label=above:$y$] {};
		\node[circle,inner sep=0.5mm,draw=black,fill=black](x9)at(-4.2,-1/2) [label=below:$z$] {};
		\node[circle,inner sep=0.5mm,draw=black,fill=black](x10)at(-2.6,-1/2) [label=below:$w$] {};
		\draw[thick,color = blue](x1)to(x5);
		\draw[thick,color = blue](x2)to(x5);
		\draw[thick,color = blue](x3)to(x6);
		\draw[thick,color = blue](x4)to(x6);
		\draw[thick,color = blue](x5)to(x6);
		\draw[draw,thick,color=orange] (-2.6,1/2) arc (60:120:1.6);
		\draw[draw,thick,color=orange] (-2.6,-1/2) arc (-60:-120:1.6);
		\draw[->, thick] (-2.1,0) to (-1.7,0);
		\draw(-1.9,-1.4) node {Type (II)};
	\end{tikzpicture}
	\caption{The operation $G \hookrightarrow G'$}
	\label{Fig:operations}
\end{center}
\end{figure}

We shall point out that the two edges $xy, zw$ in Type (II) may share some common vertex or even be parallel.
The next lemma indicates that for any graph $G'\in \mathcal{G}$ which is not a disjoint union of copies of $K_2^3$'s, there exists some $G\in \mathcal{G}$ such that $G\hookrightarrow G'$.

\begin{lemma}\label{ctr}
Let $G'$ be any connected cubic multigraph with at least four vertices.
Then there exists some $G\in \mathcal{G}$ such that $G\hookrightarrow G'$.
Moreover, if the multiplicities of all edges are provided, 
such $G$ can be constructed with a time complexity of $O(1)$.
\end{lemma}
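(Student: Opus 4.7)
I would proceed by a case analysis on the local multi-edge structure of $G'$. First, since $G'$ is connected with $|V(G')|\ge 4$, no two vertices can be joined by three parallel edges (such a pair would itself form a $K_2^3$ component). Hence every pair of vertices of $G'$ is joined by at most two edges.

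If $G'$ is simple, pick any edge $uv$: since $G'$ is cubic and simple, $u$'s other two neighbors $a,b$ are distinct, and so are $v$'s other two neighbors $c,d$. Then $G:=(G'-\{u,v\})+\{ab,cd\}$ is a valid cubic multigraph and $G\hookrightarrow G'$ via Type~(II). Otherwise let $\{u,v\}$ be a pair joined by two parallel edges, with third neighbors $p$ and $q$. If $p\ne q$, set $G:=(G'-\{u,v\})+pq$, a reverse of Type~(I). The only delicate situation is $p=q=:x$, so that $\{u,v,x\}$ forms a ``pendant theta'' attached to the rest of $G'$ solely through $x$'s third neighbor $y\notin\{u,v\}$. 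If $y$'s other two neighbors $y_1,y_2$ are distinct, reverse of Type~(II) at the edge $xy$ gives $G:=(G'-\{x,y\})+\{uv,y_1y_2\}$; note that $\{u,v\}$ then becomes an isolated $K_2^3$ component, which is permissible since $G$ need not be connected. If instead $y$ has a double edge to a single vertex $y'$, let $y''$ denote $y'$'s third neighbor, and apply reverse of Type~(I) at the pair $\{y,y'\}$ to obtain $G:=(G'-\{y,y'\})+xy''$.

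The main obstacle is justifying this last subcase: the naive reverse of Type~(II) at $xy$ would create the loop $y_1y_1$, forcing us to move one step further to the adjacent parallel pair $\{y,y'\}$. The key structural observation enabling this detour is that $y''\ne x$, which holds because $x$'s three neighbors in $G'$ are already $u,v,y$, so $x$ cannot be adjacent to $y'$. This makes the added edge $xy''$ loop-free and the reduction legitimate. Finally, since each case inspects the neighborhoods of at most five vertices and performs a constant number of edge modifications, the construction runs in $O(1)$ time once the edge multiplicities are available.
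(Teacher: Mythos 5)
Your proposal is correct and follows essentially the same route as the paper's proof: reduce via Type (II) at a suitable multiplicity-one edge, via Type (I) at a double edge whose endpoints have distinct third neighbors, and in the remaining "pendant theta'' configuration walk one step further, using exactly the paper's observation that the far vertex's third neighbor cannot coincide with the attachment vertex. The only cosmetic difference is that you phrase it as a direct case analysis rather than first isolating the paper's two conditions on a ``good'' edge, and you note explicitly (and correctly) that a $K_2^3$ component may be created, which is permissible since $G$ need not be connected.
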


\begin{proof}
We first claim that one can find an edge $e=uv\in E(G')$ such that
\begin{itemize}
	\item[(1)] if $e$ has multiplicity one, then $|N_{G'-e}(u)| = |N_{G'-e}(v)| = 2$;
	\item[(2)] otherwise, $e=uv$ has multiplicity two and $N_{G'}(u)\backslash\{v\} \neq N_{G'}(v)\backslash \{u\}$.
\end{itemize}
If $G'$ is a simple cubic graph, then every edge $uv \in E(G')$ satisfies item (1), so the conclusion holds.
Hence we may assume that $G$ has some edges with multiplicity at least two.
Choose any such edge $e_1=uv$.
The multiplicity of $e_1$ is two (as otherwise, it must be three and then it contradicts the fact that $G'$ is connected and has at least four vertices).
Then either $e_1$ satisfies item (2), or there exists some vertex $w\in N_{G'}(u)\cap N_{G'}(v)$.
We assume the latter case holds.
Then there exists a unique edge $wx$ incident to $w$ with $x\notin \{u,v\}$ (as otherwise, the degree of $u$ or $v$ would be four in $G'[\{u,v,w\}]$).
Also $wx$ has multiplicity one.
The only possibility that $wx$ does not satisfy item (1) is when there are two parallel edges between $x$ and some new vertex $y\notin \{u,v,w,x\}$.
But then one of the parallel edges $xy$ satisfies item (2) as the other neighbour of $y$ cannot be $w$ (this is simply because we have seen $\{u,v,x\}\subseteq N_{G'}(w)$). 
This proves the claim.
We point out that the edge $e_1$ can be chosen arbitrarily, and all the mentioned edges and vertices are within a constant distance from $e_1$.
Therefore, if the multiplicities of all edges are provided,
then the proof of the above claim can be executed in $O(1)$ time.

Consider the case when $e=uv$ has multiplicity two.
Let $x\neq y$ be the two distinct vertices satisfying that $\{x\}=N_{G'}(u)\backslash\{v\}$ and $\{y\}=N_{G'}(v)\backslash\{u\}$.
Let $G = G'-\{u,v\} + xy$. Then clearly $G$ is a cubic multigraph, and $G'$ can be generated from $G$ in type (I) in $O(1)$ time.

Now suppose that $e=uv$ has multiplicity one.
Let $\{x,y\} = N_{G'-e}(u)\backslash \{v\}$ and $\{z,w\} = N_{G'-e}(v)\backslash\{u\}$.
Note that $\{x,y\}$ and $\{z,w\}$ may overlap.
Let $G$ be obtained from $G'$ by deleting the vertices $u,v$ and adding two new edges $xy$ and $zw$.
Since $x\neq y$ and $z\neq w$,
$G$ is also a cubic multigraph, and $G'$ can be generated from $G$ in type (II),
finishing the proof.
\end{proof}

Next, we prove the main result of this subsection.
Note that any graph in $\mathcal{G}$ has an even number of vertices.

\begin{lemma}\label{cons}
Let $G\in \mathcal{G}$ be any multigraph on $2n$ vertices.
Then there exists a sequence $G_0, G_1, \ldots, G_k$ of cubic multigraphs such that $G_0=m K_2^3$, $G_k=G$, and $G_i\hookrightarrow G_{i+1}$ for every $0\leq i\leq k-1$.
Here, we have $n=m+k$.
\end{lemma}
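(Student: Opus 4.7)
The plan is to proceed by induction on $n$, the half-order of $G$, leveraging Lemma~\ref{ctr} as the engine of the reduction step. The base case $n=1$ is immediate: every cubic multigraph on two vertices consists of three parallel edges, so $G = K_2^3$ and we take $m=1$, $k=0$. More generally, whenever every connected component of $G$ has exactly two vertices, the only option for each component is $K_2^3$, so $G = n K_2^3$, and the trivial sequence (of length zero) with $m=n$ works.

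For the inductive step, I would assume $n \geq 2$ and suppose $G$ has at least one connected component $C$ with $|V(C)| \geq 4$. Applying Lemma~\ref{ctr} to $C$ yields a cubic multigraph $C^*$ on $|V(C)|-2$ vertices together with an operation witnessing $C^* \hookrightarrow C$ (of type (I) or (II)). Define
\[
G^* \;=\; \big(G - V(C)\big) \,\sqcup\, C^*,
\]
which is again a cubic multigraph, now on $2n-2$ vertices. The key observation is that the type (I) or (II) operation producing $C$ from $C^*$ is purely local, touching only the two new vertices $u,v$ and one or two edges of $C^*$; leaving the other components of $G^*$ untouched, the very same operation witnesses $G^* \hookrightarrow G$ in the sense of Definition~\ref{Def:cubic}.

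By the induction hypothesis applied to $G^*$, there exists a sequence $G_0, G_1, \ldots, G_{k-1}$ of cubic multigraphs with $G_0 = mK_2^3$, $G_{k-1} = G^*$, and $G_i \hookrightarrow G_{i+1}$ for every $0 \leq i \leq k-2$, where $n-1 = m + (k-1)$. Setting $G_k := G$, the observation above shows that $G_{k-1} \hookrightarrow G_k$, so $G_0, \ldots, G_k$ is the desired sequence, and the identity $n = m + k$ is preserved since one additional operation was appended while $n$ increased by one.

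I do not anticipate a real obstacle here; the only mild subtlety is ensuring that the local operation produced by Lemma~\ref{ctr} on the component $C$ extends verbatim to an operation $G^* \hookrightarrow G$ on the ambient multigraph, which is clear from the fact that Definition~\ref{Def:cubic} only prescribes the modification in a bounded neighborhood of the chosen edge(s). The bookkeeping $n = m + k$ is automatic because each application of type (I) or (II) increases the vertex count by exactly two, matching the increase of $n$ by one per step.
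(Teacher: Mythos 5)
Your proposal is correct and follows essentially the same route as the paper: induction on $n$, replacing a component $F$ on at least four vertices by the $F^*$ supplied by Lemma~\ref{ctr}, noting that the local type (I)/(II) operation extends to $G^*\hookrightarrow G$ where $G^*=(G-V(F))\sqcup F^*$, and handling the case of all components equal to $K_2^3$ trivially. Your explicit bookkeeping of $n=m+k$ is a minor elaboration of what the paper leaves implicit.
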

\begin{proof}
We prove this by induction on $n$.
This statement is trivial when $n=1$.
Now suppose this holds for any graph in $\mathcal{G}$ with less than $2n$ vertices.
Let $G\in \mathcal{G}$ have $2n$ vertices.
If $G$ itself is a disjoint union of copies of $K_2^3$'s, then the conclusion holds trivially.
Otherwise, $G$ has a connected component $F\in \mathcal{G}$ with at least four vertices.
By Lemma~\ref{ctr}, there exists $F^*\in \mathcal{G}$ such that $F^*\hookrightarrow F$.
Then we also have $G^*\hookrightarrow G$, where $G^*\in \mathcal{G}$ denotes the disjoint union of $G-V(F)$ and $F^*$.
Note that $|V(G^*)|=|V(G)|-2=2n-2$.
By applying induction to $G^*$, it becomes evident that, along with $G^*\hookrightarrow G$, the induction provides a desired sequence of cubic multigraphs for $G$. 
\end{proof}

\medskip

Now we provide an outline of the proof for Theorem~\ref{thm2}. 
Consider an arbitrary cubic multigraph $G$. 
The overall approach (see Figure~\ref{overall_proof_cubic}) involves recursively growing subgraphs based on the sequence $G_0 = mK_2^3, G_1, \ldots, G_k = G$ as provided by Lemma~\ref{cons}.
The crux of the proof lies in an inductive argument: given a ``suitable'' subgraph $H_i$ of $G_i$ and the operation $G_i \hookrightarrow G_{i+1}$, our aim is to find a ``suitable'' subgraph $H_{i+1}$ of $G_{i+1}$. 
This inductive argument, as illustrated in Figure~\ref{fig:induction}, involves several reductions among various states of spanning subgraphs. 
The precise definitions of these states, which are technical in nature (see Definition~\ref{def:states}), will be provided in the subsequent subsection.

\renewcommand{\arraystretch}{1.5}
\setlength{\arraycolsep}{2.5pt}
\begin{figure}[H]
\begin{minipage}{.47\textwidth}
	\centering
	\[
	\begin{matrix}
		G_0 =mK_2^3 & \hookrightarrow & G_1 &\hookrightarrow & \ldots & \hookrightarrow &G_k=G \\  
		\phantom{G=m}\scalebox{1.2}{\rotatebox{90}{\makebox(0,0){ \  \ensuremath{\subseteq }}}} & & \scalebox{1.2}{\rotatebox{90}{\makebox(0,0){ \  \ensuremath{\subseteq }}}}& & & & \scalebox{1.2}{\rotatebox{90}{\makebox(0,0){ \  \ensuremath{\subseteq }}}}  \phantom{\;=G} \\ 
		\phantom{G=m}H_0 & \mapsto & H_1 & \mapsto & \ldots & \mapsto & H_k \phantom{\;=G}
	\end{matrix}
	\]
	\caption{The overall proof of Theorem \ref{thm2}}
	\label{overall_proof_cubic}
\end{minipage}
\hfill
\begin{minipage}{.47\textwidth}
	\centering
	\begin{tikzcd}[column sep=2.7cm, row sep=1.23cm]
		\text{State-1} \arrow[r,"\text{\footnotesize Lemma \ref{state1}}"] & \text{Proper} \arrow[d,"\text{\footnotesize Lemma \ref{lem:state-0}}"]\\
		\text{State-2} \arrow[u,swap, "\text{\footnotesize Lemma \ref{state2}}"] & \text{State-0}\arrow[l,dashed,""{name=U, label=above:\text{\footnotesize Lemma \ref{op1}}, label=below:\text{\footnotesize (Inductive Step)}}]
	\end{tikzcd}
	\caption{Proof of $H_i \mapsto H_{i+1}$ as in Figure~\ref{overall_proof_cubic}}
	\label{fig:induction}
\end{minipage}
\end{figure}
\setlength{\arraycolsep}{3pt}
\renewcommand{\arraystretch}{1.0}

\subsection{Reduction among states of spanning subgraphs}
In the upcoming lemmas, we present a series of reductions involving spanning subgraphs with distinct properties, which we refer to as ``states''. 
These reductions are depicted in Figure~\ref{fig:induction}. 

Before proceeding, let us provide the formal definitions of these states.

\begin{definition}\label{def:states}
Let $H$ be a spanning subgraph of a multigraph $G\in \mathcal{G}$. Then we say
\begin{itemize}
	\item $H$ is in \textbf{state-$2$} if $a_0(H) = -\frac52$, $a_1(H)\in [\frac{1}{2},\frac52]$, $a_2(H)\in [-\frac12, \frac32]$ and $a_3(H)\in [-\frac32, \frac12]$,
	\item $H$ is in \textbf{state-$1$} if $a_0(H)= -\frac52$ and $a_1(H), a_2(H), a_3(H)\in [-\frac12, \frac32]$,
	\item $H$ is \textbf{proper} if $a_i(H)\in [-2,2]$ holds for each $0\leq i \leq 3$ (i.e., $\norm{\mathbf{a}(H)}_{\infty}\leq 2$), and
	\item $H$ is in \textbf{state-$0$} if $a_0(H), a_3(H)\in [-2,\frac12]$ and $a_1(H), a_2(H)\in [-\frac12,2]$.
\end{itemize}
\end{definition}

We collect some basic facts on cubic multigraphs in the following.

\begin{proposition}\label{prop:states}
Let $H$ be a spanning subgraph of an $n$-vertex multigraph $G\in \mathcal{G}$. Then
\begin{itemize}
	\item[(1)] For all $0\leq i\leq 3$, $a_i(H)\in \frac{1}{2}\mathbb{Z}$.
	In particular, if $n=4k$, then all $a_i(H)$ are integers;
	and if $n=4k+2$, then all $a_i(H) \in \mathbb{Z} + \frac{1}{2}$.
	\item[(2)] $H$ is in state-0 (or proper) if and only if $G\backslash H$ is in state-0 (or proper).
\end{itemize}
\end{proposition}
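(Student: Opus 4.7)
The plan is to reduce both items to routine consequences of the definitions together with Proposition~\ref{sym}, which has already been established. I expect no substantive obstacle here, since the statement is essentially a bookkeeping remark formalizing the symmetry that will be exploited throughout the induction of Section~\ref{Sec:thm2}.

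For item (1), I would first recall that every cubic multigraph must have an even number of vertices (the handshake identity forces $3n$ to be even). Writing $n = 2m$, the defining formula reads $a_i(H) = |V_i^H| - \frac{n}{4} = |V_i^H| - \frac{m}{2}$, which immediately lies in $\frac{1}{2}\mathbb{Z}$. The parity refinement then follows from the parity of $m$: when $n = 4k$ we have $\frac{m}{2} = k \in \mathbb{Z}$, so $a_i(H) \in \mathbb{Z}$; and when $n = 4k+2$ we have $\frac{m}{2} \in \mathbb{Z} + \frac{1}{2}$, so $a_i(H) \in \mathbb{Z} + \frac{1}{2}$.

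For item (2), I would invoke Proposition~\ref{sym} with $d = 3$, which yields $a_i(G\backslash H) = a_{3-i}(H)$ for all $0 \leq i \leq 3$. Thus the involution $H \mapsto G\backslash H$ acts on the tuple $(a_0, a_1, a_2, a_3)$ by swapping the outer entries $a_0 \leftrightarrow a_3$ and the inner entries $a_1 \leftrightarrow a_2$. The next step is to observe that both properties under consideration are visibly invariant under this swap: being \emph{proper} is the completely symmetric condition $|a_i| \leq 2$ for all $i$; and being in \emph{state-$0$} is the condition that the outer entries lie in $[-2, \frac{1}{2}]$ while the inner entries lie in $[-\frac{1}{2}, 2]$, which is preserved by exactly the same swap. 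The biconditional in each case then follows at once, and the entire write-up should amount to only a few lines.
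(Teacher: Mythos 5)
Your proposal is correct and follows the same route as the paper: item (1) is the same direct computation from the definition of $a_i(H)$ (using that $n$ is even), and item (2) is exactly the paper's appeal to Proposition~\ref{sym} with $d=3$ plus the observation that the state-$0$ and proper conditions are invariant under the swap $a_i \leftrightarrow a_{3-i}$. Your write-up is just a more detailed version of the paper's one-line proof.
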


\begin{proof}
The first item follows easily by the definition of $a_i(H)$, and the last item holds because of that $a_i(H)=a_{3-i}(G\backslash H)$ given by Proposition \ref{sym}.
\end{proof}

To proceed, we need to introduce additional notation.
Let $H$ be a subgraph of a cubic multigraph $G$.
For integers $0\leq i,j\leq 3$,
let $\mathcal{P}_{ij}(H)$ denote the family of all $H(i,j)$-edges in $H$,
and let $\mathcal{Q}_{ij}(H)$ denote the family of all $H(i,j)$-edges in $G\backslash H$.
Also define $p_{ij}(H)=|\mathcal{P}_{ij}(H)|$ and $q_{ij}(H)=|\mathcal{Q}_{ij}(H)|$.
Note that we always have $p_{0j}(H)=0$ and $q_{3j}(H)=0$ for any $j$.

\begin{lemma}\label{lem:state-0}
If a cubic multigraph $G$ has a proper subgraph $H$, 
then it has a subgraph in state-$0$.
In addition, if $\mathcal{P}_{ij}(H)$ and $\mathcal{Q}_{ij}(H)$ for all $0\leq i,j\leq 3$ are provided,
then one can construct a subgraph of $G$ in state-$0$ using $O(1)$ time.
\end{lemma}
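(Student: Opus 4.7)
My plan is to transform a given proper $H$ into a state-$0$ subgraph via a bounded sequence of local adjustments provided by Lemma~\ref{change_b}. Define the \emph{excess potential}
\[
\Phi(H) \;=\; \max\!\bigl\{a_0(H)-\tfrac12,\,0\bigr\} \;+\; \max\!\bigl\{a_3(H)-\tfrac12,\,0\bigr\} \;+\; \max\!\bigl\{-a_1(H)-\tfrac12,\,0\bigr\} \;+\; \max\!\bigl\{-a_2(H)-\tfrac12,\,0\bigr\},
\]
so that $H$ is in state-$0$ if and only if $\Phi(H)=0$. For any proper $H$ we have $\Phi(H)\in\tfrac12\mathbb{Z}_{\ge 0}$ and $\Phi(H)\le 6$. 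The proof then reduces to the following key claim: whenever $H$ is proper with $\Phi(H)>0$, one can obtain a proper $H'$ with $\Phi(H')<\Phi(H)$ via at most two edge additions/removals, chosen in $O(1)$ time from $\{\mathcal{P}_{ij}(H),\mathcal{Q}_{ij}(H)\}$. Iterating the claim at most $12$ times (since $\Phi$ drops by at least $\tfrac12$ per step) terminates at a state-$0$ subgraph, all within $O(1)$ total time.

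To prove the key claim I would first invoke Proposition~\ref{prop:states}(2), which states that $H$ is in state-$0$ iff $G\setminus H$ is; so by possibly replacing $H$ with $G\setminus H$ we may assume the violation takes one of the two representative forms $a_0(H)>\tfrac12$ or $a_1(H)<-\tfrac12$ (the cases $a_3(H)>\tfrac12$ and $a_2(H)<-\tfrac12$ become these under the complementation). In the representative case $a_0(H)\ge 1$, a degree-$0$ vertex exists and the identity $2q_{00}(H)+q_{01}(H)+q_{02}(H)=3|V_0^H|>0$ forces some $\mathcal{Q}_{0j}(H)$ to be nonempty; adding one such edge shifts $\mathbf{a}(H)$ by $(-2,+2,0,0)$, $(-1,0,+1,0)$, or $(-1,+1,-1,+1)$ according to whether $j=0,1,2$. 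A short enumeration over which of $a_1(H),a_2(H),a_3(H)$ are in their bad ranges then identifies an index $j$ for which the resulting $H'$ is proper and satisfies $\Phi(H')<\Phi(H)$. The only configurations in which no single addition works (for instance, $q_{01}=q_{02}=0$ together with $a_1(H)\ge 1$, so the forced $H(0,0)$-addition would push $a_1$ above $2$) require a paired move: under these conditions $V_0^H$ must be a union of cubic components of $G$, and the identity $q_{01}(H)+2q_{11}(H)+q_{12}(H)=2|V_1^H|$ together with $q_{12}\le|V_2^H|$ forces $\mathcal{Q}_{11}(H)$ to be nonempty, so one simultaneously adds an $H(0,0)$-edge and an $H(1,1)$-edge to effect the net change $(-2,0,+2,0)$ that strictly reduces $\Phi$. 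The symmetric case $a_1(H)<-\tfrac12$ is handled analogously via an $\mathcal{P}_{22}(H)$-deletion (whose nonemptiness follows from $p_{12}(H)+2p_{22}(H)+p_{23}(H)=2|V_2^H|$ together with $p_{12}\le|V_1^H|$), possibly paired with a further addition or deletion to absorb a newly created bad entry.

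The hard part will be carrying out the case analysis cleanly, in particular ruling out the ``stuck'' scenarios in which every single-edge adjustment either violates properness or fails to strictly reduce $\Phi$. In each such scenario one must combine the global constraint $\sum_i a_i(H)=0$, the half-integrality from Proposition~\ref{prop:states}(1), and the two linear identities counting $H$- and $(G\setminus H)$-incidences at each degree class, in order to force the existence of a compensating edge so that a paired two-edge move succeeds. Once the required edges are located, constructing $H'$ and certifying $\Phi(H')<\Phi(H)$ are both immediate from the formulas in Lemma~\ref{change_b}.
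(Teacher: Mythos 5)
Your overall strategy (a half-integral potential measuring the state-$0$ violations, decreased by $O(1)$-time local adjustments drawn from the families $\mathcal{P}_{ij},\mathcal{Q}_{ij}$, with at most a bounded number of iterations) is in the same spirit as the paper, but the entire content of the lemma lies in the claim you defer: that \emph{every} proper $H$ with $\Phi(H)>0$ admits a properness-preserving move of at most two edges that strictly decreases $\Phi$. You explicitly leave this case analysis as ``the hard part,'' and it is not routine: because several coordinates can violate the state-$0$ ranges simultaneously, the natural single-edge moves for one violation can create or worsen another (for instance, when only $\mathcal{Q}_{02}(H)$ is available, the shift $(-1,+1,-1,+1)$ can push $a_3$ past $\tfrac12$ or $a_2$ below $-\tfrac12$, and in principle threaten properness of $a_1$ or $a_3$), so one must rule out each such ``stuck'' configuration by counting arguments of the type you only gesture at. The paper avoids exactly this interaction problem by splitting the argument into two ordered extremal stages: first take a proper $H$ minimizing $a_0(H)+a_3(H)$ and show this forces $a_0,a_3\le\tfrac12$, and then, subject to that, maximize $a_2$ (with $a_2\le a_1$ by complementation) and show $a_2\ge-\tfrac12$; several of its moves work not by decreasing a potential but by contradicting the extremal choice (e.g.\ the $\mathcal{Q}_{01}$-addition contradicting the maximality of $a_2$), which is precisely the flexibility your monotone-$\Phi$ scheme lacks and would have to compensate for by a complete enumeration you have not carried out.

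In addition, at least one of the supporting claims you do state is incorrect as written: from $p_{12}(H)+2p_{22}(H)+p_{23}(H)=2|V_2^H|$ and $p_{12}(H)\le|V_1^H|$ you can only conclude $2p_{22}(H)+p_{23}(H)\ge 2|V_2^H|-|V_1^H|$, which may be realized entirely by $H(2,3)$-edges, so it does not force $\mathcal{P}_{22}(H)\neq\emptyset$; the analogous step in the paper is careful to conclude only ``a $(2,2)$-edge \emph{or} a $(2,3)$-edge exists'' and treats the two outcomes separately. So as it stands the proposal has a genuine gap: the key claim is unproven, one of its stated ingredients is false, and the lemma would only follow after the full case analysis (or an extremal reformulation like the paper's) is actually supplied.
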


\begin{proof}
Fix $G\in \mathcal{G}$. This proof will be divided into two steps:
First, we show that there exists a proper subgraph $H$ of $G$ satisfying that $a_0(H), a_3(H)\in [-2, \frac12]$.
Second, building on such a proper subgraph, we find a subgraph $H$ of $G$ in state-$0$.

To show the first step,
we choose $H$ to be a proper subgraph of $G$ with the minimum $f(H) := a_0(H) + a_3(H)$.
By Proposition~\ref{sym}, we have $f(G\backslash H) = f(H)$, so by symmetry, we may assume that $-2\leq a_0(H)\leq a_3(H)$.
If $a_3(H)\leq \frac12$, then we achieve the first step.
So we may assume $a_3(H) > \frac12$, and further by Proposition~\ref{prop:states}, $a_3(H)\geq 1$.
Suppose that $a_2(H) \leq 0$.
Then $3a_3(H) \geq 3 > 2a_2(H) + a_1(H)$.
By Lemma \ref{find_edg}, we can find an $H(3,3)$-edge $e\in E(H)$, i.e., $e\in \mathcal{P}_{33}(H)$.
Now consider $H' = H-e$, and we see that $a_3(H')=a_3(H)-2\in [-1,0]$, $a_2(H')=a_2(H)+2\in [0,2]$ and all other $a_j(H')=a_j(H)\in [-2,2]$ remain the same.
This implies that $H'$ is a proper subgraph of $G$ with $f(H')=f(H) - 2$,
which is a contradiction to the minimum of $f(H)$.		
Therefore, we may assume that $a_2(H) > 0$.
Since $a_2(H) + a_3(H)\in \mathbb{Z}$ and $a_2(H) + a_3(H) \geq 1 + \frac12 = \frac32$,
we derive that $a_2(H) + a_3(H) \geq 2$, which further implies that both $a_0(H), a_1(H) \leq 0$ (as otherwise $0=a_0(H) + a_1(H) + a_2(H) + a_3(H) > 0 + (-2) + 2 = 0$, a contradiction). To complete the proof of this step, we consider the following two cases:
\begin{itemize}
	\item There exists an $H(2,3)$-edge $e'\in E(H)$. Then $H' = H - e'$ satisfies $\mathbf{a}(H') = \mathbf{a}(H) + (0,1,0,-1)$,
	which implies that $H'$ is proper with $f(H') = f(H) - 1$, a contradiction.
	
	\item There does not exist $H(2,3)$-edges in $H$.
	Using $|V^H_i|=a_i(H)+\frac{n}{4}$, it is easy to see that $2|V^H_2|>|V^H_1|\geq p_{12}(H)$, and $3|V^H_3|>|V_1^H|\geq p_{13}(H)$.
	Also note that $p_{02}(H)=p_{03}(H)=p_{23}(H)=0$ and $i|V^H_i|=\sum_{j\in [3]\backslash \{i\}} p_{ij}(H)+2p_{ii}(H)$.
	These clearly imply the existence of an $H(2,2)$-edge $e_1\in E(H)$ and an $H(3,3)$-edge $e_2\in E(H)$.
	Then $H' = H - e_1 - e_2$ satisfies that $\mathbf{a}(H') = \mathbf{a}(H) + (0,2,0,-2)$.
	So $H'$ is proper with $f(H') = f(H) - 2$, again a contradiction to the minimum of $f(H)$.
\end{itemize}
Hence the proper subgraph $H$ we have chosen must satisfy that $a_0(H), a_3(H)\in [-2, \frac12]$.
Now we explain that provided $\mathcal{P}_{ij}(H)$ and $\mathcal{Q}_{ij}(H)$ for all $0\leq i,j\leq 3$, such a proper subgraph can be constructed in $O(1)$ time. 
The above proof can be transformed into an algorithm that generates a sequence of proper subgraphs $H$ iteratively, where each subsequent subgraph has a strictly decreasing $f(H)$ value.\footnote{Upon generating proper subgraphs $H$, the algorithm also updates $\mathcal{P}_{ij}(H)$ and $\mathcal{Q}_{ij}(H)$ for all $0\leq i,j\leq 3$.}
Additionally, each subsequent proper subgraph in this sequence is obtained from the current $H$ by deleting or adding one or two edges from specific families $\mathcal{P}_{ij}(H)$ or $\mathcal{Q}_{ij}(H)$. It is worth noting that the choice of edges from these families can be made arbitrarily. 
So each iteration only takes $O(1)$ time.\footnote{This $O(1)$ time complexity also includes the time required for updating all $\mathcal{P}_{ij}(H)$ and $\mathcal{Q}_{ij}(H)$.}
Since $-4\leq f(H)\leq 4$ holds for any proper $H$ and the value $f(H)$ in each iteration decrease at least $1$,
we conclude that this algorithm must terminate in at most 8 iterations and thus using $O(1)$ time.

For the second step, we choose a proper subgraph $H$ satisfying that $a_0(H), a_3(H)\in [-2, \frac12]$, $a_2(H) \leq a_1(H)$,\footnote{Here, we leverage the symmetry between $H$ and $G\backslash H$ as stated in Propositions~\ref{sym} and \ref{prop:states}.} and subject to the above conditions, $a_2(H)$ is maximum.
If $a_2(H)\geq -\frac12$, then such $H$ is in state-$0$ and we are done.
Hence we may assume that $a_2(H) < -\frac12$ (and so $a_2(H)\in [-2,-1]$).
In this case, $a_1(H)=-a_2(H) - a_3(H) - a_0(H)\geq 1 - \frac12 - \frac12 = 0$.

Suppose that $a_3(H) > 0$.
Then we have $a_1(H)+2a_2(H)\leq 2-2=0<3a_3(H)$,
so by Lemma~\ref{find_edg}, there exists an $H(3,3)$-edge $e\in E(H)$.
Then $H' = H-e$ satisfies that $\mathbf{a}(H') = \mathbf{a}(H) + (0,0,2,-2)$.
It is easy to verify that $H'$ is a subgraph in state-$0$.
Hence, we may assume that $a_3(H)\in [-2,0]$.
Recall that $a_1(H)\in [0,2]$ and $a_2(H)\in [-2,-1]$,
which imply that $a_0(H)\in [-1, \frac12]$.
It remains to consider two cases as follows.
\begin{itemize}
	\item There exists an $H(0,1)$-edge $e$ in $G\backslash H$.
	Then $H' = H+e$ satisfies
	$\mathbf{a}(H') = \mathbf{a}(H) + (-1,0,1,0)$.
	So $H'$ satisfies the same conditions as the chosen $H$, but $a_2(H')=a_2(H)+1>a_2(H)$.
	This is a contradiction to the choice of $H$.
	
	\item There does not exist $H(0,1)$-edges in $G\backslash H$.
	Recall the definition of $q_{ij}(H)$.
	Similar to the above discussion, we see that $3|V^H_0|>|V_2^H|\geq q_{02}(H)$, and $2|V^H_1|>|V^H_2|\geq q_{12}(H)$, from which we can find an $H(0,0)$-edge $e_1\in E(G\backslash H)$ and an $H(1,1)$-edge $e_2\in E(G\backslash H)$.
	If $a_0(H)\geq 0$, then $H'=H + e_1 + e_2$ satisfies
	$\mathbf{a}(H') = \mathbf{a}(H) + (-2,0,2,0)$ and thus is a subgraph in state-$0$.
	Otherwise, we have either $a_0(H)=-1$ or $-\frac12$.
	In the former case, it is clear that $\mathbf{a}(H) = (-1,2,-1,0)$;
	in the latter case, using Proposition~\ref{prop:states} (1),
	we can derive that $\mathbf{a}(H)=(-\frac12,\frac32,-\frac32,\frac12)$.
	In each of the two cases, $H+e_2$ satisfies $\mathbf{a}(H+e_2) = \mathbf{a}(H)+(0,-2,2,0)$ and thus is a subgraph in state-$0$.
\end{itemize}	
Now we have found a subgraph in state-$0$ as desired.
Based on similar explanations as given after the first step (considering that $-2\leq a_2(H)\leq 2$ is finite and bounded for any proper $H$), we can conclude that the second step can also be executed in $O(1)$ time, 
provided $\mathcal{P}_{ij}(H)$ and $\mathcal{Q}_{ij}(H)$ for all $0\leq i,j\leq 3$.
This finishes the proof of Lemma~\ref{lem:state-0}.
\end{proof}

The following two lemmas will employ similar arguments to those presented in Lemma~\ref{lem:state-0}.
For $i\in \{2,3\}$ and a subgraph $H$ of a cubic multigraph $G$,
let $S_{i;1,1}(H)$ denote a path $xyz$ contained in $H$, where $d_H(x)=d_H(z)=1$ and $d_H(y)=i$.
Furthermore, we define $\mathcal{S}_i(H)$ to be the family consisting of all $S_{i;1,1}(H)$. 

\begin{lemma}\label{state1}
If a cubic multigraph $G$ has a subgraph $H$ in state-$1$, then it has a proper subgraph.
In addition, if $\mathcal{P}_{ij}(H)$, $\mathcal{Q}_{ij}(H)$, $\mathcal{S}_i(H)$ and $\mathcal{S}_i(G\backslash H)$ for all $0\leq i,j\leq 3$ are provided,
then one can construct a proper subgraph of $G$ using $O(1)$ time.
\end{lemma}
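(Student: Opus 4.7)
The plan is a finite case analysis on the value of $\mathbf{a}(H)$. Since $H$ is in state-1 we have $a_0(H)=-\tfrac{5}{2}$ and, by Proposition~\ref{prop:states}, $a_i(H)\in\{-\tfrac{1}{2},\tfrac{1}{2},\tfrac{3}{2}\}$ for $i\in\{1,2,3\}$ with $a_1+a_2+a_3=\tfrac{5}{2}$; this yields exactly six tuples $(a_1,a_2,a_3)$, namely the three permutations of $(\tfrac{3}{2},\tfrac{3}{2},-\tfrac{1}{2})$ and the three of $(\tfrac{3}{2},\tfrac{1}{2},\tfrac{1}{2})$. For each tuple I would use Lemma~\ref{change_b} to tabulate the change in $\mathbf{a}$ caused by every elementary local adjustment---removing an edge from $\mathcal{P}_{ij}(H)$, adding one from $\mathcal{Q}_{ij}(H)$, or removing or adding a path from $\mathcal{S}_i(H)$ or $\mathcal{S}_i(G\setminus H)$---and then show that in each tuple some such adjustment, or a short sequence of at most three of them, produces a proper subgraph.

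Five of the six tuples are easy: at least one single-step removal---of an edge from $\mathcal{P}_{11}(H)$, $\mathcal{P}_{12}(H)$, $\mathcal{P}_{13}(H)$, or of a path from $\mathcal{S}_3(H)$---has a change vector landing in the proper region. Because $|V_0^H|=n/4-\tfrac{5}{2}\ge 0$ forces $n\ge 10$ and hence $|V_1^H|\ge 2$, the identity $2p_{11}+p_{12}+p_{13}=|V_1^H|$ guarantees $\mathcal{P}_{11}(H)\cup\mathcal{P}_{12}(H)\cup\mathcal{P}_{13}(H)\ne\emptyset$. The only complication arises in the three tuples with $a_2=\tfrac{3}{2}$, where removing an $H(1,3)$-edge would push $a_2$ past $2$; the two easier of these are $(\tfrac{3}{2},\tfrac{3}{2},-\tfrac{1}{2})$ and $(\tfrac{1}{2},\tfrac{3}{2},\tfrac{1}{2})$. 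When $p_{11}(H)\ge 1$ or $p_{12}(H)\ge 1$ in these tuples, an $H(1,1)$- or $H(1,2)$-edge removal succeeds; otherwise $p_{13}(H)=|V_1^H|$. For $(\tfrac{3}{2},\tfrac{3}{2},-\tfrac{1}{2})$ we have $|V_1^H|>|V_3^H|$, so by pigeonhole $\mathcal{S}_3(H)\ne\emptyset$ and removing an $S_{3;1,1}(H)$-path produces a proper subgraph. For $(\tfrac{1}{2},\tfrac{3}{2},\tfrac{1}{2})$ the degree identities force $p_{22}(H)=p_{33}(H)+1\ge 1$, so removing an $H(1,3)$-edge followed by an $H(2,2)$-edge (change vector $(+1,+1,-1,-1)$) yields a proper subgraph.

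The main obstacle is the tuple $(a_1,a_2,a_3)=(-\tfrac{1}{2},\tfrac{3}{2},\tfrac{3}{2})$, in which the only single-step successes are removing an $H(1,2)$-edge or an $S_{3;1,1}(H)$-path. Assuming both $\mathcal{P}_{12}(H)=\emptyset$ and $\mathcal{S}_3(H)=\emptyset$, I would split on $p_{11}(H)$. If $p_{11}(H)\ge 1$, remove an $H(1,1)$-edge and then an $H(2,2)$- or $H(2,3)$-edge; one such edge exists because $|V_2^H|>0$ and $p_{12}(H)=0$ force $2p_{22}(H)+p_{23}(H)>0$. If $p_{11}(H)=0$ then every degree-one vertex is matched to a distinct degree-three vertex, so $p_{13}(H)=|V_1^H|$, and the degree identities give $p_{33}(H)=p_{22}(H)+1$; if $p_{22}(H)\ge 1$ the same two-step removal (an $H(1,3)$-edge then an $H(2,2)$-edge) works. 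The only remaining subcase has $p_{22}(H)=0$ and $p_{33}(H)=1$, leaving a unique $H(3,3)$-edge $w_aw_b$. Here I would delete all three $H$-edges incident to $w_a$: the constraints $\mathcal{S}_3(H)=\emptyset$ and $p_{33}(H)=1$ force the other two $H$-neighbours of $w_a$ to lie in $V_1^H\cup V_2^H$ with at most one in $V_1^H$, and a direct computation shows the resulting change vector---either $(+2,0,0,-2)$ or $(+1,+2,-1,-2)$---produces a proper subgraph. Since finitely many cases arise and each adjustment involves at most three edges located from the provided families and the adjacency information at a single vertex, the procedure runs in $O(1)$ time.
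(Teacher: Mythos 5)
Your proposal is correct, and it uses essentially the same toolkit as the paper's proof (single edge removals, the $(+1,+1,-1,-1)$ double removal of an $H(1,3)$- together with an $H(2,2)$-edge, and deletion of an $S_{(3;1,1)}(H)$, all tracked through their effect on $\mathbf{a}$), but it is organized differently and finishes the hardest case by a different maneuver. The paper argues sequentially: it first eliminates $H(1,2)$- and $H(1,1)$-edges, then removes an $H(1,3)$-edge and splits on whether $a_2(H)=\tfrac32$, handling the residual case either by an $S_{(3;1,1)}(H)$ found via Lemma~\ref{ext} or, when $p_{22}(H)=0$, by removing an $H(2,3)$-edge and \emph{iterating} the whole argument on the new state-$1$ subgraph until the case $(a_1,a_3)=(\tfrac32,-\tfrac12)$ is reached. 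You instead enumerate the six feasible tuples $(a_1,a_2,a_3)$ outright and, in the critical tuple $(-\tfrac12,\tfrac32,\tfrac32)$ with $p_{11}=p_{12}=p_{22}=0$, exploit the degree-sum identity $p_{33}=p_{22}+1=1$ and delete the whole $H$-star at an endpoint $w_a$ of the unique $H(3,3)$-edge; together with $\mathcal{S}_3(H)=\emptyset$ this pins the change vector to $(+2,0,0,-2)$ or $(+1,+2,-1,-2)$, both landing in the proper region, so no iteration is needed. I checked the counting identities ($p_{22}=p_{33}+1$ in the tuple $(\tfrac12,\tfrac32,\tfrac12)$ and $p_{33}=p_{22}+1$ in the critical tuple) and all change vectors; they are correct. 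Two minor points: in the multigraph setting the ``other two $H$-neighbours of $w_a$'' may be a single degree-$2$ vertex joined by parallel edges, but that subcase again gives $(+2,0,0,-2)$, so nothing breaks; and the star deletion at $w_a$ needs adjacency information at that one vertex in addition to the listed families, which is consistent with the paper's informal $O(1)$ accounting but worth stating explicitly. What your route buys is a non-iterative, fully explicit case table; what the paper's route buys is a shorter exposition that avoids enumerating tuples by reusing the same reduction twice.
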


\begin{proof}
Let $H$ be a subgraph of $G$ in state-$1$. Then $a_0(H)=-\frac{5}{2}$.
By Proposition~\ref{prop:states}, we see $a_i(H)\in \mathbb{Z}+\frac12$ for all $i$.
So $a_1(H),a_2(H),a_3(H)\in \{-\frac12,\frac12, \frac32\}$.

To turn such $H$ into a proper subgraph, our first attempt is to increase the value of $a_0(\cdot)$ by finding some $H(1,j)$-edge in $H$ for some $j$ and then removing it.
Indeed, if there exists an $H(1,2)$-edge $e\in E(H)$, then $H' = H - e$ satisfies that $\mathbf{a}(H') = \mathbf{a}(H) + (1,0,-1,0)$ and thus is a proper subgraph.
So we may assume $p_{12}(H)=0$.

Next, suppose that there exists an $H(1,1)$-edge $f\in E(H)$.
Let $H' = H - f$. Then $\mathbf{a}(H') = \mathbf{a}(H) + (2,-2,0,0)$.
So $H'$ is not proper if and only if $a_1(H) = -\frac12$.
This case forces that $\mathbf{a}(H) = (-\frac52,-\frac12,\frac32,\frac32)$ and $\mathbf{a}(H') = (-\frac12,-\frac52,\frac32,\frac32)$.
Since $p_{12}(H')+p_{23}(H')+2p_{22}(H')=2|V_2^{H'}| > |V_1^{H'}|\geq p_{12}(H')$,
there exists either an $H'(2,2)$-edge $e_1$ or an $H'(2,3)$-edge $e_2$ in $H'$.
Let $H_1 = H' - e_1$ and $H_2 = H' - e_2$.
As $\mathbf{a}(H_1) = (-\frac12,-\frac12,-\frac12,\frac32)$ and $\mathbf{a}(H_2) = (-\frac12,-\frac32,\frac32,\frac12)$,
at least one of $H_1, H_2$ exists which is a proper subgraph of $G$.
So we may also assume $p_{11}(H)=0$.

Since $p_{11}(H)=p_{12}(H)=0$, there must be an $H(1,3)$-edge $e$ in $H$.
Let $H' = H - e$. Then $\mathbf{a}(H') = \mathbf{a}(H) + (1,-1,1,-1)$.
It is clear that $H'$ is not proper if and only $a_2(H) = \frac32$.
Assume $a_2(H) = \frac32$ occurs.
Then $a_1(H)+a_3(H)=1$, where $a_1(H), a_3(H)\in \{-\frac12,\frac12, \frac32\}$.
It follows that either (I) $a_1(H) = \frac32$ and $a_3(H) = -\frac12$, or (II) $a_1(H)\leq \frac12 \leq a_3(H)$.

Consider the case (I) $a_1(H) = \frac32$ and $a_3(H) = -\frac12$. 
In this case since $|V_1^H| > |V_3^H|$, by (a) of Lemma~\ref{ext} (by choosing $A=\{1\}, B=\{3\}$ and $n_1=m_3=1$), there exists some $S_{(3;1,1)}(H)$. 
Let $H_1$ be obtained from $H$ by deleting the two edges of $S_{(3;1,1)}(H)$.
Then we have $\mathbf{a}(H_1) = \mathbf{a}(H) + (2,-1,0,-1) = (-\frac12,\frac12,\frac32,-\frac32)$, 
which shows that $H_1$ is proper.

Now consider the case (II) $a_1(H)\leq \frac12 \leq a_3(H)$. 
Suppose that there exists an $H(2,2)$-edge $f$ in $H$.
Let $H_1 = H - f - e$, where $e$ is the $H(1,3)$-edge in $H$ mentioned above.
Then $\mathbf{a}(H_1) = \mathbf{a}(H) +(1,1,-1,-1)$, implying that $H_1$ is a proper subgraph of $G$.
Hence, we may assume that $p_{22}(H)=0$. 
Recall that we also have $p_{12}(H)=0$.
So there must exist some $H(2,3)$-edge $f'\in E(H)$. 
Let $H' = H - f'$. Then $\mathbf{a}(H') = \mathbf{a}(H) + (0,1,0,-1)$.
So $H'$ remains in state-$1$, with $a_1(H')=a_1(H)+1$ and $a_3(H')=a_3(H)-1$.
Repeating exactly the same proof as above on $H'$,
we can find either a proper subgraph of $G$, or find an $H'(2,3)$-edge $f''$ in $H'$
such that $H''=H'-f''$ is also in state-$1$ and satisfying $a_1(H'')=\frac32$ and $a_3(H'')=-\frac12$, i.e., the case (I).
As shown above, the latter case can lead to a proper subgraph of $G$.
So in any situation, $G$ has a proper subgraph. 

Finally, we point out that if $\mathcal{P}_{ij}(H)$, $\mathcal{Q}_{ij}(H)$, $\mathcal{S}_i(H)$ and $\mathcal{S}_i(G\backslash H)$ for all $0\leq i,j\leq 3$ are provided,
then similar to Lemma~\ref{lem:state-0}, the above arguments also can be implemented in $O(1)$ time because all local adjustments are obtained by adding or deleting finite members from $\mathcal{P}_{ij}(H)$, $\mathcal{Q}_{ij}(H)$, $\mathcal{S}_i(H)$ and $\mathcal{S}_i(G\backslash H)$.
This finishes the proof. 
\end{proof}

\begin{lemma}\label{state2}
If a cubic multigraph $G$ has a subgraph $H$ in state-$2$, then it has a proper subgraph.
In addition, if $\mathcal{P}_{ij}(H)$, $\mathcal{Q}_{ij}(H)$, $\mathcal{S}_i(H)$ and $\mathcal{S}_i(G\backslash H)$ for all $0\leq i,j\leq 3$ are provided,
then one can construct a proper subgraph of $G$ using $O(1)$ time.
\end{lemma}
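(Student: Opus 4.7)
The plan is to parallel the proof of Lemma~\ref{state1}: enumerate the finite list of possible irregularity vectors $\mathbf{a}(H)$ and, for each, exhibit a short sequence of local adjustments that either yields a proper subgraph directly or lands in state-$1$ (at which point Lemma~\ref{state1} applies). Since $a_0(H)=-5/2$ forces $n\equiv 2\pmod 4$, Proposition~\ref{prop:states}(1) gives $a_i(H)\in\mathbb{Z}+\tfrac12$ for every $i$; combined with the state-$2$ ranges and $a_1(H)+a_2(H)+a_3(H)=5/2$, any state-$2$ vector with $a_1(H)\le 3/2$ and $a_3(H)\ge -1/2$ already lies in state-$1$, leaving only
\[
\textrm{(A)}~(-\tfrac52,\tfrac52,-\tfrac12,\tfrac12),\quad \textrm{(B)}~(-\tfrac52,\tfrac52,\tfrac12,-\tfrac12),\quad \textrm{(C)}~(-\tfrac52,\tfrac52,\tfrac32,-\tfrac32).
\]

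For each of (A), (B), (C) I would test in turn the following local adjustments, computing the resulting $\mathbf{a}$-change via Lemma~\ref{change_b}: (i) delete an $H(1,1)$-edge of $H$ (change $(2,-2,0,0)$, always yielding a proper subgraph); (ii) add an $H(1,2)$-edge of $G\backslash H$ to $H$ (change $(0,-1,0,1)$, landing in state-$1$, then invoke Lemma~\ref{state1}); (iii) delete the two edges of some $S_{(2;1,1)}(H)$ (change $(3,-2,-1,0)$, always proper); (iv) only in (A) or (B), delete an $H(1,3)$-edge (change $(1,-1,1,-1)$, proper); and (v) only in (C), add an $H(2,2)$-edge of $G\backslash H$ and then delete an $H(1,3)$-edge of the updated subgraph (net change $(1,-1,-1,1)$, proper). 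If any of (i)--(v) applies, we are done. Otherwise $p_{11}(H)=q_{12}(H)=0$ and no $S_{(2;1,1)}(H)$ exists; in (A), (B) we further have $p_{13}(H)=0$, and in (C) also $q_{22}(H)=0$. For (A) and (B), these constraints force every $H$-edge incident to $V_1^H$ to land in $V_2^H$ with each $V_2^H$-vertex having at most one such neighbour, giving $|V_1^H|\le|V_2^H|$ and contradicting $|V_1^H|>|V_2^H|$.

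The main obstacle is case (C). Here $|V_1^H|-|V_2^H|=1$ still yields $p_{13}(H)\ge 1$ by the same count, but a single $H(1,3)$-deletion lands at $(-\tfrac32,\tfrac32,\tfrac52,-\tfrac52)$, which is not proper. To resolve it I would combine the residual structural information with a pigeonhole argument: since $q_{12}(H)=q_{22}(H)=0$ and $V_3^H$ is incident to no non-$H$-edge, every non-$H$-edge touching $V_2^H$ must go to $V_0^H$, so $q_{02}(H)=|V_2^H|=n/4+3/2$. The bound $q_{02}(H)\le 3|V_0^H|$ then forces $n\ge 18$ (for smaller $n$ the four stated assumptions are already self-contradictory), and $q_{02}(H)>|V_0^H|$ further guarantees a vertex $u\in V_0^H$ with two distinct non-$H$-neighbours $w_1,w_2\in V_2^H$. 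Picking any $H(1,3)$-edge $xy$ and performing the three-edge local adjustment $H\mapsto H-xy+uw_1+uw_2$, a vertex-by-vertex degree count shows $\mathbf{a}=(-\tfrac52,\tfrac32,\tfrac32,-\tfrac12)$, a state-$1$ vector; a final application of Lemma~\ref{state1} then delivers the desired proper subgraph. Every step manipulates only a bounded number of edges drawn from the supplied families $\mathcal{P}_{ij}(H),\mathcal{Q}_{ij}(H),\mathcal{S}_i(H),\mathcal{S}_i(G\backslash H)$, yielding the required $O(1)$-time algorithm.
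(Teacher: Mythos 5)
Your proposal is correct, and for the main case it takes a genuinely different route from the paper. The reduction to the three vectors (A), (B), (C), the adjustments (i), (iii), (iv), and the counting contradiction in (A), (B) coincide with the paper's treatment (the paper phrases the count via Lemma~\ref{ext} to produce an $S_{(2;1,1)}(H)$). The divergence is in case (C), $\mathbf{a}(H)=(-\tfrac52,\tfrac52,\tfrac32,-\tfrac32)$: the paper stays inside $H$, deleting either an $H(1,2)$-edge together with an $H(1,3)$-edge or the two edges of an $S_{(3;1,1)}(H)$ (whose existence again comes from Lemma~\ref{ext} once $p_{11}=p_{12}=0$), arriving at $(-\tfrac12,\tfrac32,\tfrac32,-\tfrac52)$ and then passing to the complement $G\backslash H_4$, which is in state-$1$; you instead work with non-edges, testing the additions (ii) and (v), and in the residual case ($q_{12}=q_{22}=0$) counting non-edges into $V_0^H$ to find a vertex $u\in V_0^H$ with two non-$H$-neighbours in $V_2^H$, i.e.\ in the paper's notation an $\overline{S}_{(0;2,2;1,1)}(H)$, which combined with an $H(1,3)$-deletion lands directly in state-$1$ without invoking the $H\leftrightarrow G\backslash H$ symmetry. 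Your degree counts all check out (note the two non-$H$-edges at $u$ automatically go to distinct $w_1\neq w_2$ since each vertex of $V_2^H$ has exactly one non-$H$-edge, a point worth stating). The paper's version buys a shorter argument by reusing Lemma~\ref{ext} and Proposition~\ref{sym}; yours is more self-contained and avoids complementation, at the cost of an extra counting step. One small repair for the algorithmic claim: as written, your pigeonhole scans $\mathcal{Q}_{02}(H)$, which is not an $O(1)$ operation; but the configuration you need is exactly a member of $\mathcal{S}_3(G\backslash H)$ (a path $w_1uw_2$ in $G\backslash H$ with $d_{G\backslash H}(u)=3$ and $d_{G\backslash H}(w_1)=d_{G\backslash H}(w_2)=1$), so your counting argument should be read as proving $\mathcal{S}_3(G\backslash H)\neq\emptyset$, after which an arbitrary member can be taken from the supplied family in $O(1)$ time, exactly as in the paper's other steps.
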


\begin{proof}
Let $H$ be a subgraph in state-$2$.
That is, $a_0(H) = -\frac52$, $a_1(H)\in [\frac{1}{2},\frac52]$, $a_2(H)\in [-\frac12, \frac32]$ and $a_3(H)\in [-\frac32, \frac12]$.
By Proposition~\ref{prop:states}, we see $a_i(H)\in \mathbb{Z}+\frac12$ for all $i$.
If $a_1(H)\leq \frac32$, then $a_0(H)+a_1(H)+a_2(H)\leq \frac12$ and thus $a_3(H)\geq -\frac12$.
This shows that $H$ is also in state-$1$, so by Lemma \ref{state1} we are done. 
Hence from now on, we may assume that $a_1(H)=\frac52$. 
Note that we have $a_2(H)+a_3(H)=0$ and $x:=a_2(H)\in \{-\frac12, \frac12, \frac32\}$.

If there exists an $H(1,1)$-edge $e$ in $H$, then $H_1= H - e$ satisfies $\mathbf{a}(H_1) = \mathbf{a}(H) + (2,-2,0,0)=(-\frac12,\frac12,x,-x)$. 
If there exists some $S_{(2;1,1)}(H)$,
then the subgraph $H_2$ obtained from $H$ by deleting all edges of this $S_{(2;1,1)}(H)$ satisfies that $\mathbf{a}(H_2) = \mathbf{a}(H) + (3,-2,-1,0) = (\frac12,\frac12,x-1,1-x)$.
Note that each of $H_1$ and $H_2$ is proper whenever $x\in \{-\frac12, \frac12, \frac32\}$.
So we may claim that $p_{11}(H)=0$ and there is no $S_{(2;1,1)}(H)$.

Suppose $x\in \{-\frac12, \frac12\}$. 
If there is an $H(1,3)$-edge $f$ in $H$, then $H_3= H - f$ satisfies $\mathbf{a}(H_3) = \mathbf{a}(H) + (1,-1,1,-1)=(-\frac32,\frac32,x+1,-x-1)$ which is proper.
So we may assume $p_{13}(H) = 0$.
Since $p_{11}(H)=0$ and $|V_1^H| > |V_2^H|$,  by Lemma~\ref{ext} there exists some $S_{(2;1,1)}(H)$, 
which is a contradiction.
It remains to consider the case $x=\frac32$.
That is, $\mathbf{a}(H)=(-\frac52,\frac52,\frac32,-\frac32)$.

Suppose that there exist an $H(1,2)$-edge $e$ and an $H(1,3)$-edge $f$ in $H$.
Clearly, $e$ and $f$ are vertex-disjoint. 
Let $H_4 = H - e - f\subseteq G$. 
Then we have 
$\mathbf{a}(H_4) = \mathbf{a}(H) + (2,-1,0,-1)= (-\frac12,\frac32,\frac32,-\frac52)$, implying that $G\backslash H_4$ is in state-$1$.
By Lemma~\ref{state1}, $G$ has a proper subgraph.  

Hence, we may assume that either $p_{12}(H)=0$ or $p_{13}(H)=0$.
In the former case, we have $p_{11}(H)=p_{12}(H)=0$, and since $|V_1^H| > |V_3^H|$, 
by Lemma~\ref{ext} there exists some $S_{(3;1,1)}(H)$.
Let $H_5$ be obtained from $H$ by deleting all edges of this $S_{(3;1,1)}(H)$. 
Then $\mathbf{a}(H_5) = \mathbf{a}(H) + (2,-1,0,-1) = (-\frac12,\frac32,\frac32,-\frac52)$, 
implying that $G\backslash H_5$ is in state-$1$.
Using Lemma \ref{state1} again, there exists a proper subgraph of $G$. 
So we may assume that the latter case $p_{13}(H)=0$ holds.
However, since $p_{11}(H)=0$ and $|V_1^H| > |V_2^H|$, 
by Lemma~\ref{ext} there exists some $S_{(2;1,1)}(H)$,
a contradiction to the above claim.

Similar to the previous lemma, this proof also can be implemented in $O(1)$ time,
if $\mathcal{P}_{ij}(H)$, $\mathcal{Q}_{ij}(H)$, $\mathcal{S}_i(H)$ and $\mathcal{S}_i(G\backslash H)$ for all $0\leq i,j\leq 3$ are provided.
\end{proof}

\subsection{Completing the proof of Theorem~\ref{thm2}}

Before we can conclude the proof of Theorem~\ref{thm2},
we need an additional lemma that addresses the inductive step depicted in Figure~\ref{fig:induction}.

\begin{lemma}\label{op1}
Let $G, G'\in \mathcal{G}$ satisfy that $G\hookrightarrow G'$.
If $G$ has a subgraph $H$ in state-$0$, then $G'$ has a proper subgraph.
In addition, if $\mathcal{P}_{ij}(H)$, $\mathcal{Q}_{ij}(H)$, $\mathcal{S}_i(H)$ and $\mathcal{S}_i(G\backslash H)$ for all $0\leq i,j\leq 3$ are provided,
then one can construct a proper subgraph of $G'$ using $O(1)$ time.
\end{lemma}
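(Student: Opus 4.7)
The plan is to construct a spanning subgraph $H'$ of $G'$ by canonically extending $H$, and then verify that $H'$ is either proper, or that $H'$ (or $G' \setminus H'$) lies in state-1 or state-2, whereupon Lemma \ref{state1} or Lemma \ref{state2} produces a proper subgraph of $G'$. The canonical extension preserves $d_{H'}(w) = d_H(w)$ for every $w \in V(G)$: in Type (I) I include $xu, vy$ in $E(H')$ iff $xy \in E(H)$ and independently choose $m \in \{0,1,2\}$ of the parallel $uv$-edges for $E(H')$; in Type (II) I include $\{xu, uy\}$ in $E(H')$ iff $xy \in E(H)$, include $\{zv, vw\}$ in $E(H')$ iff $zw \in E(H)$, and independently decide whether $uv \in E(H')$. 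Each such $H'$ is indexed by the pair $(i_u, i_v) := (d_{H'}(u), d_{H'}(v))$, which ranges over a small admissible set depending on the subcase.

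Since $\tfrac{n+2}{4} = \tfrac{n}{4} + \tfrac{1}{2}$, a direct count yields, for every $k \in \{0,1,2,3\}$,
\[
a_k(H') \;=\; a_k(H) - \tfrac{1}{2} + \bigl|\{w \in \{u,v\} : d_{H'}(w) = k\}\bigr|.
\]
I would then split into the five subcases (I-a) $xy \in E(H)$ with $(i_u,i_v) \in \{(1,1),(2,2),(3,3)\}$; (I-b) $xy \notin E(H)$ with $(i_u,i_v) \in \{(0,0),(1,1),(2,2)\}$; (II-a) $xy, zw \in E(H)$ with $(i_u,i_v) \in \{(2,2),(3,3)\}$; (II-b) $xy, zw \notin E(H)$ with $(i_u,i_v) \in \{(0,0),(1,1)\}$; and (II-c) exactly one of $xy, zw$ lies in $E(H)$, with $(i_u,i_v) \in \{(2,0),(3,1)\}$ up to relabelling. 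In each subcase, I would check that some admissible pair makes $H'$ proper, or places $H'$ (or $G' \setminus H'$, via Proposition \ref{sym} and Proposition \ref{prop:states}(2)) in state-1 or state-2.

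The crux is the boundary case $a_0(H) = -2$ (and symmetrically $a_3(H) = -2$), which can occur only when $n \equiv 0 \pmod 4$. In subcases (I-a) and (II-a) every admissible pair satisfies $i_u, i_v \geq 1$, and so the canonical extension forces $a_0(H') = -\tfrac{5}{2}$. The key observation is that the choice $(i_u, i_v) = (3, 3)$, admissible in both subcases, produces $(a_1(H'), a_2(H'), a_3(H')) = (a_1(H) - \tfrac12, a_2(H) - \tfrac12, a_3(H) + \tfrac32)$; when $a_0(H) = -2$ is an integer, state-0 and the identity $a_1(H) + a_2(H) + a_3(H) = 2$ force $a_1(H), a_2(H) \in \{0,1,2\}$ and $a_3(H) \in \{-2,-1,0\}$, so all three resulting coordinates lie in $[-\tfrac12, \tfrac32]$ and $H'$ is in state-1. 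The case $a_3(H) = -2$ is handled by the same choice $(3,3)$ in (I-a), (II-a), and in (I-b), (II-b), (II-c) by the mirrored choices $(0,0)$, $(0,0)$, and $(2,0)$ respectively, each of which places $G' \setminus H'$ in state-1 or state-2 by the same finite check. Outside these boundary scenarios the formula above yields a proper $H'$ directly for an appropriate admissible pair.

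The $O(1)$ running time is straightforward: the type of $G \hookrightarrow G'$ and the memberships $xy, zw \in E(H)$ are recoverable in $O(1)$ from the supplied $\mathcal{P}_{ij}(H), \mathcal{Q}_{ij}(H)$; the vector $\mathbf{a}(H)$ is obtainable in $O(1)$ from the cardinalities $|V_i^H|$; the correct $(i_u, i_v)$ is then a finite table lookup; and, since only $O(1)$ edges change in passing from $H$ to $H'$, the families $\mathcal{P}_{ij}(H'), \mathcal{Q}_{ij}(H'), \mathcal{S}_i(H'), \mathcal{S}_i(G' \setminus H')$ can be rebuilt in $O(1)$ before invoking Lemma \ref{state1} or Lemma \ref{state2}.
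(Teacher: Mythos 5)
Your proposal is correct and takes essentially the same route as the paper: extend $H$ across the operation so that the degrees of old vertices are preserved, track the resulting half-integer shift of $\mathbf{a}$, and fall back on Lemmas~\ref{state1} and \ref{state2} in the boundary cases --- the paper's constructions are exactly your admissible pairs $(3,3)$, $(3,1)$ and $(2,0)$. One small point: the combinations $a_0(H)=-2$ with (I-b), (II-b), (II-c) are not spelled out in your case list, but they are precisely the $H\leftrightarrow G\backslash H$ mirrors of cases you do treat (e.g.\ $a_0(H)=-2$ in (II-c) with $(3,1)$ yields a subgraph in state-$2$, the mirror of your $a_3(H)=-2$, $(2,0)$ case), so you should make that symmetry reduction explicit, as the paper does by assuming $xy\in E(H)$ at the outset.
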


\begin{proof}
There are two types of the operation $G\hookrightarrow G'$,
and we distinguish between them. 

Suppose $G\hookrightarrow G'$ is of Type (I). 
Let $xy\in E(G)$ and $u,v\in V(G')\backslash V(G)$ be
as indicated on the left-hand side of Figure~\ref{Fig:operations}. 
Let $H$ be a spanning subgraph of $G$ in state-$0$.
So 
\begin{equation}\label{equ:state-0}
	-2\leq a_0(H), a_3(H)\leq \frac12 \mbox{ and } -\frac12\leq a_1(H), a_2(H)\leq 2.
\end{equation}
By Proposition~\ref{prop:states} (2), $G\backslash H$ is also in state-$0$. 
By symmetry between $H$ and  $G\backslash H$, we may assume that $xy\in E(H)$. 
Let $H'$ be obtained from $H$ by removing the edge $xy$, adding vertices $u$ and $v$, and adding the edges $xu, vy$ and two parallel edges between $u$ and $v$.
It is evident to see that $H'\subseteq G'$ with $V_i^{H'} = V_i^H$ for $i=0,1,2$ and $V_3^{H'} = V_3^{H}\cup\{u,v\}$.
Since $|V(G')|=|V(G)|+2$,
we have $a_i(H') = a_i(H) - \frac12$ for $i=0,1,2$ and $a_3(H') = a_3(H) + \frac32$.
Then $H'$ is a proper subgraph of $G'$, unless that $a_0(H) = -2$. 
In the latter case, we see $a_0(H') = -\frac52$, $a_1(H'),a_2(H')\in [-1,\frac32]$ and $a_3(H')\in [-\frac12,2]$.
By Proposition~\ref{prop:states} (1), all $a_i(H') \in \mathbb{Z} + \frac{1}{2}$.
This shows that $a_0(H') = -\frac52$ and $a_1(H'),a_2(H'), a_3(H')\in [-\frac12,\frac32]$, i.e., $H'$ is in state-$1$. 
By Lemma \ref{state1}, $G'$ has a proper subgraph. 
For the algorithmic aspect, 
suppose that $\mathcal{P}_{ij}(H)$, $\mathcal{Q}_{ij}(H)$, $\mathcal{S}_i(H)$ and $\mathcal{S}_i(G\backslash H)$ for all $0\leq i,j\leq 3$ are provided.
Then it takes $O(1)$ time to update and derive $\mathcal{P}_{ij}(H')$, $\mathcal{Q}_{ij}(H')$, $\mathcal{S}_i(H')$ and $\mathcal{S}_i(G'\backslash H')$ for all $0\leq i,j\leq 3$. 
Using Lemma \ref{state1} again, one can construct a proper subgraph of $G'$ using $O(1)$ time.

Now we may assume that $G\hookrightarrow G'$ is of Type (II). 
Let $xy,zw\in E(G)$ and $u,v\in V(G')\backslash V(G)$ be as indicated on the right-hand side of Figure~\ref{Fig:operations}. 
Let $H$ be a spanning subgraph of $G$ in state-$0$. 
So $\mathbf{a}(H)$ satisfies \eqref{equ:state-0}.
By symmetry between $H$ and  $G\backslash H$, we may assume that $xy\in E(H)$.  
Suppose that $zw\in E(H)$.
Let $H'$ be obtained from $H$ by removing the edges $xy$ and $zw$, adding vertices $u$ and $v$, and adding the edges $xu,uy,zv,vw$ and $uv$.
Then $H'$ is a subgraph of $G'$ with $V_i^{H'}= V_i^H$ for $i=0,1,2$ and $V_3^{H'} = V_3^H\cup\{u,v\}$.
Following the exactly same proof as in the last paragraph, we can derive a proper subgraph in $G'$.

Therefore, we may assume that $xy\in E(H)$ and $zw\notin E(H)$.
Let $H^*$ be obtained from $H$ by removing the edge $xy$, adding vertices $u$ and $v$, and adding the edges $xu,uy$ and $uv$. 
Then we see $V_i^{H^*}=V_i^H$ for $i\in \{0,2\}$, $V_1^{H^*}=V_1^H\cup \{v\}$ and $V_3^{H^*}=V_3^{H}\cup \{u\}$,
implying that $$\mathbf{a}(H^*) = \mathbf{a}(H) + \left(-\frac12,\frac12,-\frac12,\frac12\right).$$
Since $\mathbf{a}(H)$ satisfies \eqref{equ:state-0},
we see that $H^*$ is a proper subgraph of $G'$, unless one of the two possibilities $a_0(H^*)=-\frac52$ and $a_1(H^*)=\frac52$ occurs. 
Now suppose that $a_0(H^*)=-\frac52$.
Using Proposition~\ref{prop:states} (1), we can derive that
$a_1(H^*)\in [\frac12, \frac52]$, $a_2(H^*)\in [-\frac12, \frac32]$ and $a_3(H^*)\in [-\frac32, \frac12]$. 
That is, $H^*$ is in state-$2$. 
By Lemma~\ref{state2}, one can find a proper subgraph in $G'$.
Therefore, we may assume that $a_0(H^*)\geq -2$ and $a_1(H^*)=\frac52$. 
Using Proposition~\ref{prop:states} (1) again,
we have $a_0(H^*)\in [-\frac32, -\frac12]$, $a_2(H^*)\in [-\frac12, \frac32]$ and $a_3(H^*)\in [-\frac32, \frac12]$. 
This shows that $a_0(H^*)+a_1(H^*)+a_3(H^*)\geq -\frac12$ and thus $a_2(H^*)\in [-\frac12,\frac12]$.
Note that $uv$ is an $H^*(1,3)$-edge in $H^*$.
Then $H^{\star}=H^*-uv$ is a subgraph of $G'$ satisfying $$\mathbf{a}(H^{\star}) = \mathbf{a}(H^*) +(1,-1,1,-1).$$ 
This shows that
$a_0(H^{\star})\in [-\frac12,\frac12]$, $a_1(H^{\star}) = \frac32$, $a_2(H^{\star})\in [\frac12,\frac32]$ and $a_3(H^{\star})\in [-\frac52, -\frac12]$. 
It is key to observe that $G'\backslash H^\star$ is proper or it is in state-$1$.
Now, applying Lemma~\ref{state1} once again, the proof of this lemma is completed.
In the case of Type (II), we omit the ``moreover" part as it can be derived analogously to the proof for Type (I).
\end{proof}

Finally, we can establish Theorem~\ref{thm2} by proving the following slightly stronger statement.

\begin{theorem}\label{thm2:stronger}
Let $G$ be any cubic multigraph.
Then there exists a spanning subgraph $H$ of $G$ in state-$0$, that is, $a_0(H), a_3(H)\in [-2, \frac12]$ and $a_1(H), a_2(H)\in [-\frac12,2].$
Moreover, there exists a linear time algorithm to find such a spanning subgraph $H$.
\end{theorem}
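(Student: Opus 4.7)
The plan is to prove the theorem by induction on $|V(G)|/2$, following the schema depicted in Figure~\ref{overall_proof_cubic}. First I would invoke Lemma~\ref{cons} to write $G$ as the last term of a chain $G_0 = mK_2^3 \hookrightarrow G_1 \hookrightarrow \cdots \hookrightarrow G_k = G$; by Lemma~\ref{ctr}, this entire chain can be produced in linear time. I would then construct inductively spanning subgraphs $H_i$ of $G_i$, each in state-$0$, together with the auxiliary families $\mathcal{P}_{ij}(H_i)$, $\mathcal{Q}_{ij}(H_i)$, $\mathcal{S}_r(H_i)$ and $\mathcal{S}_r(G_i\setminus H_i)$ that the algorithmic parts of Lemmas~\ref{lem:state-0}, \ref{state1}, \ref{state2} and \ref{op1} take as input.

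For the base case $G_0 = mK_2^3$, I would select, for each copy of $K_2^3$, either $0$, $1$, $2$, or $3$ of its three parallel edges to put in $H_0$; if $s_i$ denotes the number of copies contributing degree~$i$ to both of its vertices, then $\sum s_i = m$ and $|V_i^{H_0}| = 2s_i$. A short case analysis on $m \bmod 4$ shows that one can choose the $s_i$'s almost balanced, routing any unavoidable surplus of $\pm 2$ onto the middle coordinates $|V_1|, |V_2|$ on the high side of $n/4$ and onto the extreme coordinates $|V_0|, |V_3|$ on the low side, so that $\mathbf{a}(H_0)$ lies in the state-$0$ ranges $a_0, a_3 \in [-2, 1/2]$ and $a_1, a_2 \in [-1/2, 2]$. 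Initialising the auxiliary families for this explicit $H_0$ takes $O(n)$ time.

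The inductive step is precisely the cycle in Figure~\ref{fig:induction}. Given $H_i$ in state-$0$ for $G_i$ and the reduction $G_i \hookrightarrow G_{i+1}$, Lemma~\ref{op1} produces a proper spanning subgraph of $G_{i+1}$ in $O(1)$ time (possibly routing through state-$1$ or state-$2$ and invoking Lemmas~\ref{state1} and \ref{state2}). Lemma~\ref{lem:state-0} then converts that proper subgraph into one in state-$0$, again in $O(1)$ time, yielding $H_{i+1}$. Throughout, the auxiliary families are updated locally, since each elementary step (an edge addition or deletion, a multi-star removal, or the operation $G_i \hookrightarrow G_{i+1}$ itself) affects only a constant number of edges and vertex neighbourhoods. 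Summing $O(1)$ over $k = O(n)$ inductive steps, together with the $O(n)$ cost of producing the chain and initialising the data structures, yields the claimed linear running time.

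The main obstacle I expect is not any conceptual step but the fine-grained bookkeeping of $\mathcal{P}_{ij}(H_i)$, $\mathcal{Q}_{ij}(H_i)$, $\mathcal{S}_r(H_i)$ and $\mathcal{S}_r(G_i\setminus H_i)$ in $O(1)$ time per iteration: although only a constant number of edges and vertex degrees change at each step, a clean implementation requires doubly-linked lists indexed by degree pairs so that membership queries, insertions and deletions can be executed in constant time as the degrees of the $O(1)$ affected vertices change. Once this bookkeeping is set up, the loop in Figure~\ref{fig:induction} closes, and Theorem~\ref{thm2:stronger} (and hence Theorem~\ref{thm2}) follows at once.
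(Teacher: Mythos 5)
Your proposal is correct and follows essentially the same route as the paper: an induction along the chain given by Lemma~\ref{cons}, with the same explicit balanced base construction on $mK_2^3$ (paper's remainder order $1,2,0,3$ matches your "surplus on the middle coordinates"), and the same inductive cycle state-$0$ $\to$ proper $\to$ state-$0$ via Lemmas~\ref{op1} and \ref{lem:state-0}, carrying the families $\mathcal{P}_{ij}$, $\mathcal{Q}_{ij}$, $\mathcal{S}_i$ along for the $O(1)$-per-step and hence linear total running time. The only cosmetic difference is that the paper phrases the induction by peeling off one reduction at a time rather than precomputing the whole chain, which is equivalent.
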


\begin{proof}
Let $G$ be any cubic multigraph on $2n$ vertices. 
First, we point out that one can find all connected components of $G$ and the multiplicities of all edges of $G$ in linear time $O(n)$.\footnote{To do so, one can employ a Depth-First Search Tree algorithm on $G$.}

To finish the proof, it is enough to prove the following statement by induction on $n$:

\medskip 

\noindent {\bf ($\star$)} There exists an absolute constant $C>0$ such that
provided all connected components of a cubic multigraph $G$ on $2n$ vertices and the multiplicities of all edges of $G$, 
one can use at most $C\cdot n$ time to construct a spanning subgraph $H$ of $G$ in state-$0$ as well as $\mathcal{P}_{ij}(H)$, $\mathcal{Q}_{ij}(H)$, $\mathcal{S}_i(H)$ and $\mathcal{S}_i(G\backslash H)$ for all possible $0\leq i,j\leq 3$.

\medskip

We choose a sufficiently large constant $C>0$ that surpasses the $O(1)$ constant terms in Lemmas~\ref{ctr}, \ref{lem:state-0} and \ref{op1}.
We first prove the statement ($\star$) for $nK_2^3$, where $n\geq 1$ is arbitrary. This is sufficient to establish the base case.
Let $n = 4k + r$ where $0\leq r\leq 3$.
Take $H'$ as the disjoint union of $kK_2^0$, $kK_2^1$, $kK_2^2$ and $k K_2^3$.
Then $H'$ is a proper subgraph of $4k K_2^3$ with $a_i(H) = 0$ for all $0\leq i\leq 3$. 
If $r=0$, let $H=H'$; 
otherwise $1\leq r\leq 3$,
let $H$ be the disjoint union of $H'$ and one copy of each of the first $r$ graphs in the sequence $K_2^1, K_2^2, K_2^0, K_2^3$.
In the latter case, for each $i$ among the first $r$ indices in the sequence $1, 2,0, 3$, we have $a_i(H) = 2 - \frac{r}{2}$, 
and for other indices $j$, we have  
$a_j(H) = -\frac{r}2$.
So $H$ is a desired spanning subgraph of $nK_2^3$ in state-$0$.
It is also straightforward to constuct $\mathcal{P}_{ij}(H)$, $\mathcal{Q}_{ij}(H)$ and $\mathcal{S}_i(H)=\emptyset=\mathcal{S}_i(nK_2^3\backslash H)$ for all $0\leq i,j\leq 3$ (say with at most $C\cdot n$ time).

Now assume ($\star$) holds for any cubic multigraph with less than $2n$ vertices. 
Let $G\in \mathcal{G}$ be a cubic multigraph on $2n$ vertices, with given connected components and multiplicities of edges. 
We may assume that $G$ has a connected component $F\in \mathcal{G}$ with at least four vertices. 
By Lemma~\ref{ctr}, one can use at most $C/3$ time
to construct $F^*\in \mathcal{G}$ (and also update the multiplicities of all edges of $F^*$)
such that $F^*\hookrightarrow F$.
Then $G^*\hookrightarrow G$, where $G^*\in \mathcal{G}$ denotes the disjoint union of $G-V(F)$ and $F^*$.
It is worth noting that $|V(G^*)|=|V(G)|-2=2(n-1)$, and all connected components of $G^*$ and the multiplicities of all edges of $G^*$ are inherited from those of $G$ (plus $F^*$).
By applying induction to $G^*$, one can use at most $C\cdot (n-1)$ time to construct a spanning subgraph $H^*$ of $G^*$ in state-$0$ as well as $\mathcal{P}_{ij}(H^*)$, $\mathcal{Q}_{ij}(H^*)$, $\mathcal{S}_i(H^*)$ and $\mathcal{S}_i(G^*\backslash H^*)$ for all possible $0\leq i,j\leq 3$.
By Lemma~\ref{op1}, 
one can use at most $C/3$ time to construct a proper subgraph $H_1$ of $G$
and update $\mathcal{P}_{ij}(H_1)$, $\mathcal{Q}_{ij}(H_1)$, $\mathcal{S}_i(H_1)$ and $\mathcal{S}_i(G\backslash H_1)$ for all $0\leq i,j\leq 3$.
Then using Lemma~\ref{lem:state-0}, 
one can further take at most $C/3$ time to construct a subgraph $H_2$ of $G$ in state-$0$ and update
$\mathcal{P}_{ij}(H_2)$, $\mathcal{Q}_{ij}(H_2)$, $\mathcal{S}_i(H_2)$ and $\mathcal{S}_i(G\backslash H_2)$ for all $0\leq i,j\leq 3$.
Note that the construction of the desired $H_2$ and the corresponding families requires at most $C \cdot n$ time in total. 
We have completed the proof of the statement ($\star$).
\end{proof}

\section{Concluding remarks}\label{Sec:remarks}
In this paper, we address Conjecture~\ref{conj} proposed by Alon and Wei.
We prove two main results: a upper bound $2d^2$, which is the first bound that is independent of the number $n$ of vertices, and a confirmation of the case $d=3$. Both results extend to multigraphs and yield efficient algorithms.   
While we are aware that a similar argument, albeit potentially more extensive, can be employed to prove the conjecture for small values of $d$, it is evident that additional innovative ideas will be required to resolve the conjecture in its general form.

While investigating related properties of multigraphs, 
we find it intriguing to ask the following strengthening of Conjecture~\ref{conj}.

\begin{conjecture}\label{new_conjecture}
There exists an absolute constant $c>0$ such that for any $n, d\geq 2$ and any $n$-vertex $d$-regular multigraph $G$ which does not contain $K_2^d$, there exists a spanning subgraph $H$ of $G$ satisfying
\begin{equation}\label{equ_of_new_conjecture}
	\left|m(H,k) - \frac{n}{d+1}\right|\leq 1 + \frac{c}{n} \mbox{~ for all $0\leq k \leq d$. }
\end{equation}
\end{conjecture}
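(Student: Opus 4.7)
My plan is to build on Theorem~\ref{thm1}, which already produces a subgraph $H$ with $\norm{\mathbf{b}(H)}_\infty\leq d^2$, and to use a refined local-adjustment phase to push this bound down to $1+c/n$. I would begin from the $H$ delivered by Theorem~\ref{thm1} and enlarge the toolkit of admissible moves from multi-stars (as in Section~\ref{Sec:prim}) to bounded-length alternating walks whose edges alternate between $E(H)$ and $E(G\setminus H)$. Toggling such a walk changes only the $H$-degrees of its two endpoints, giving an elementary ``swap'' that relocates a single vertex between two prescribed degree classes; this is the granularity one needs in a regime where the multi-star adjustments of Section~\ref{Sec:prim} are too coarse to decrease $\norm{\mathbf{b}(H)}_\infty$ further.

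The core step is a fine-scale analog of Lemma~\ref{lem:reduce}: whenever $\norm{\mathbf{b}(H)}_\infty > 1+c/n$, an improving bounded alternating walk exists. Here the $K_2^d$-free hypothesis becomes essential. A $K_2^d$ is a component of two vertices whose $H$-degrees are forced to be equal in any spanning subgraph, so it contributes an even pair to a single class $V_k^H$ and prevents alternating walks from separating its endpoints. Aggregating many such components forces $|m(H,k)-n/(d+1)|$ to be bounded below by an absolute constant, which is precisely why one must rule out $K_2^d$. Excluding it restores the flexibility needed to pair any ``over-full'' class with any ``under-full'' class by a walk of length $O_d(1)$, and to implement the corresponding swap without introducing new parity obstructions.

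The main obstacle I expect is proving the existence of such short alternating walks in the low-deviation regime. The double-counting argument driving Lemma~\ref{ext} draws its slack from large discrepancies $\sum_{i\in A}n_i|V_i^H|-\sum_{j\in B}m_j|V_j^H|$, and that slack all but vanishes once $\norm{\mathbf{b}(H)}_\infty$ is close to $1$. Overcoming this will likely require a global argument — an expansion-type estimate on the auxiliary directed multigraph $D$ built in the proof of Lemma~\ref{ext}, or a mixing argument on the quotient by degree classes — to guarantee short walks between prescribed classes even when the degree-class sizes are nearly balanced. The $K_2^d$-free assumption should serve exactly to eliminate the only purely local obstruction to such expansion in a multigraph setting, namely parallel bundles of multiplicity~$d$.

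Finally, the additive $c/n$ term in \eqref{equ_of_new_conjecture} would be absorbed by a pigeonhole on the fractional part of $n/(d+1)$, which controls the unavoidable divisibility error once the local-adjustment framework has driven every $|a_i(H)|$ below $1+O(1/n)$. I would expect that an induction on $n$, with the inductive step mirroring the $H\hookrightarrow H'$ scheme used for Theorem~\ref{thm2} in Section~\ref{Sec:thm2}, could package the base cases ($n\leq n_0(d)$) together with the expansion-driven improvement step, so that the constant $c$ depends only on $d$ and not on $n$.
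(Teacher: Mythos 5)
This statement is Conjecture~\ref{new_conjecture}, which the paper does not prove: it is posed as an open strengthening of the Alon--Wei conjecture (itself open for all $d\geq 4$), supported only by tightness examples and by Proposition~\ref{new_conj_to_irregularity_strength} showing it would imply the Faudree--Lehel conjecture. So there is no proof in the paper to compare against, and your text should be judged as a standalone argument. As such it is a research plan, not a proof. Its pivotal step --- a fine-scale analog of Lemma~\ref{lem:reduce} asserting that whenever the deviation exceeds $1+c/n$ there is an improving alternating walk of length $O_d(1)$ --- is precisely the hard core of the problem, and you explicitly defer it to an unspecified ``expansion-type estimate'' or ``mixing argument''; nothing in the proposal establishes it. The double-counting slack that drives Lemma~\ref{ext} really does vanish in this regime, and the paper's own extremal examples ($K_{d,d}$ for odd $d$ and $K^2_{d/2,d/2}$) show that deviations strictly above $1$ are forced by global structure, not by divisibility of $n/(d+1)$; hence the concluding ``pigeonhole on the fractional part'' cannot absorb the residual error, and any genuine proof must explain why only an $O(1/n)$ excess survives in such graphs.

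Two further concrete mismatches. First, the framework you inherit from Theorem~\ref{thm1} controls $\mathbf{b}(H)$ and then uses \eqref{equ:a<2b}, which costs a factor $2$: even a hypothetical bound $\norm{\mathbf{b}(H)}_{\infty}\leq 1+c/n$ only yields $\norm{\mathbf{a}(H)}_{\infty}\leq 2+2c/n$, short of \eqref{equ_of_new_conjecture}; at this precision one must control $\mathbf{a}(H)$ directly (as the paper does for $d=3$ via the state machinery of Section~\ref{Sec:thm2}), and halving the target for $\mathbf{b}$ clashes with the fact that its entries lie in $\tfrac{1}{d+1}\mathbb{Z}$. Second, you close by allowing the constant $c$ to depend on $d$, whereas the conjecture demands an absolute constant valid for all $n,d\geq 2$; this uniformity in $d$ is essential for the intended application to Faudree--Lehel (where $s=\frac{n}{d}+c$ makes $sd$ grow with $n$), so the induction-with-base-cases packaging as described would not even prove the stated assertion.
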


We exclude the multigraph $K_2^d$ in this conjecture, because otherwise the disjoint union $nK_2^d$ would violate \eqref{equ_of_new_conjecture} for some appreciate choices of $d,n$.
There exist examples showing that the bound of \eqref{equ_of_new_conjecture} is tight up to the constant $c$.
The first example is the complete bipartite graphs $K_{d,d}$ for odd $d$,
for which we can derive from the footnote of the second page that 
\[
\min_{H\subseteq K_{d,d}}\|\mathbf{a}(H)\|_{\infty} = 3 - \frac{2d}{d+1}= 1 + \frac{4}{n+2}.
\]
Another example is the multigraph $K^2_{d/2,d/2}$ obtained from $K_{d/2,d/2}$ by replacing every edge with a pair of parallel edges for $d$ satisfying $d\equiv 2 \mod 4$.
Using a result of Gy\'{a}rf\'{a}s \cite{Gya1998}, 
any spanning subgraph $H$ of $K^2_{d/2,d/2}$ has $m(H,k)\geq 2$ for some $0\leq k \leq d$, implying that
\[\min_{H\subseteq K^2_{d/2,d/2}}\|\mathbf{a}(H)\|_{\infty}= 2 - \frac{d}{d+1} = 1 + \frac{1}{n+1}.\]
Our proof of Theorem~\ref{thm2} may potentially be extended to cover the case $d=3$ of Conjecture~\ref{new_conjecture}.

Another motivation for considering Conjecture~\ref{new_conjecture} arises from its connection to the Faudree-Lehel conjecture which we restate here.

\begin{conjecture}[Faudree and Lehel \cite{FL1988}]\label{conj:irregularity_strength}
There exists an absolute constant $c> 0$ such that for any $n > d \geq 2$ and any $n$-vertex $d$-regular graph $G$, $$s(G) \leq \frac{n}{d} + c.$$   
\end{conjecture}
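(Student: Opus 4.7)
The plan is to deduce Conjecture~\ref{conj:irregularity_strength} as a consequence of the (still open) Conjecture~\ref{new_conjecture}, rather than attacking the Faudree--Lehel bound directly. The Alon--Wei connection already delivers the inequality $m(H) \leq 2s(G) - 2$ in one direction; I intend to invert this picture, turning a well-spread degree sequence of a spanning submultigraph into a valid $\{1,\ldots,s\}$-labeling of $G$ with pairwise distinct weighted degrees.

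The setup will be as follows. Fix a $d$-regular simple graph $G$ on $n$ vertices, and choose $s := \lceil n/d \rceil + c'$ for a sufficiently large constant $c' > 0$ to be determined. Let $G^* := (s-1)G$ denote the multigraph obtained by replacing every edge of $G$ with $s-1$ parallel copies. Then $G^*$ is a $d^*$-regular multigraph on $n$ vertices with $d^* = (s-1)d$, and $G^* \neq K_2^{d^*}$ as soon as $n \geq 3$, so the hypothesis of Conjecture~\ref{new_conjecture} is met.

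The heart of the argument is to feed $G^*$ into Conjecture~\ref{new_conjecture} and obtain a spanning submultigraph $H^*$ with $|m(H^*,k) - n/(d^*+1)| \leq 1 + c/n$ for every $0 \leq k \leq d^*$. Because $d^* + 1 = (s-1)d + 1 \geq n + (c'-1)d$, one can choose $c'$ large enough in terms of $c$ so that $n/(d^*+1) + 1 + c/n < 2$. Since each $m(H^*,k)$ is a nonnegative integer, this forces $m(H^*,k) \leq 1$, i.e., the degrees $d_{H^*}(v)$ are pairwise distinct across $v \in V(G)$. Now define $\ell : E(G) \to \{1,\ldots,s\}$ by setting $\ell(e) := 1 + (\text{number of parallel copies of } e \text{ retained in } H^*)$; by construction $\ell(e) \in \{1,\ldots,s\}$, and the $\ell$-weighted degree of each vertex $v$ equals $d + d_{H^*}(v)$. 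Distinctness of the $d_{H^*}(v)$ therefore yields distinct weighted degrees, exhibiting $s(G) \leq s \leq n/d + c$ as required.

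The main obstacle, unsurprisingly, is the very first step: Conjecture~\ref{new_conjecture} is itself open and a considerable strengthening of the Alon--Wei conjecture, demanding error $1 + c/n$ in place of $2$. The local-adjustment framework of Sections~\ref{Sec:thm1} and~\ref{Sec:thm2} pushes the error down to $2d^2$ in general and to the sharp constant in the case $d = 3$, but reducing it all the way to $1 + c/n$ would presumably require either a substantially finer invariant than the lexicographic ordering of $\mathbf{C}(H)$ (one that quantifies the pigeonhole slack up to an absolute constant) or a genuinely new global argument replacing the iterative ``improvement'' moves. A secondary, but benign, point is verifying that the $K_2^{d^*}$-exclusion in Conjecture~\ref{new_conjecture} never obstructs the reduction, which it does not as soon as $n \geq 3$.
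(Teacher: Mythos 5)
The statement you are asked to prove is the Faudree--Lehel Conjecture itself, and your proposal does not prove it: everything hinges on Conjecture~\ref{new_conjecture}, which is open (and is a strengthening of the Alon--Wei conjecture, itself unresolved beyond the cases treated in this paper). What you have written is a conditional reduction, and you say as much in your closing paragraph. That reduction is essentially the one the paper already records as Proposition~\ref{new_conj_to_irregularity_strength}: blow up each edge of $G$ into $s$ (you use $s-1$) parallel copies, apply Conjecture~\ref{new_conjecture} to the resulting $sd$-regular multigraph, note that the pigeonhole bound $n/(sd+1)+1+c/n<2$ forces $m(H,k)\le 1$ for all $k$, and read off an edge-weighting $e\mapsto 1+|E_e\cap H|$ whose weighted degrees $d+d_H(v)$ are distinct. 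So as a contribution it reproduces the paper's conditional implication rather than supplying the missing ingredient, namely a proof (even an approximate one with error $1+c/n$) of Conjecture~\ref{new_conjecture}; the local-adjustment machinery of Sections~\ref{Sec:thm1} and~\ref{Sec:thm2} only yields error $2d^2$ in general and the sharp constant for $d=3$, which is far from what your first step needs.

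Within the reduction itself there is also a small but real gap: your claim that one can choose $c'$ (depending only on $c$) so that $n/(d^*+1)+1+c/n<2$ fails when $n$ is small, e.g.\ $n\le c$, since then $c/n\ge 1$ and no choice of $c'$ helps. The paper handles this by restricting to $n\ge 2c+1$ and absorbing the finitely many remaining $d$-regular graphs into the final constant, using that $s(G)$ is finite for each such graph; you would need the same (or a similar) cleanup step to make even the conditional statement correct as stated.
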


We point out that a confirmation of Conjecture~\ref{new_conjecture} would imply Conjecture~\ref{conj:irregularity_strength}.

\begin{proposition}\label{new_conj_to_irregularity_strength}
If Conjecture \ref{new_conjecture} holds, then Conjecture \ref{conj:irregularity_strength} holds.
\end{proposition}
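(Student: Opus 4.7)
The plan is to reduce Conjecture~\ref{conj:irregularity_strength} to Conjecture~\ref{new_conjecture} via an edge-multiplication correspondence. Given an $n$-vertex simple $d$-regular graph $G$ with $n > d \geq 2$ and an integer $s \geq 2$, I form the multigraph $G^{(s-1)}$ by replacing every edge of $G$ with $s-1$ parallel copies. Then $G^{(s-1)}$ is an $n$-vertex $(s-1)d$-regular multigraph whose maximum edge multiplicity is $s-1$. Since $s-1 < (s-1)d$ whenever $d \geq 2$, this multigraph does not contain $K_2^{(s-1)d}$, so Conjecture~\ref{new_conjecture} applies to it.

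The crucial observation is a bijection between weight functions $f \colon E(G) \to [s]$ and spanning sub-multigraphs $H$ of $G^{(s-1)}$, defined by $f(e) = 1 + \mu_H(e)$, where $\mu_H(e)$ is the multiplicity of $e$ in $H$. Under this correspondence, the weighted degree sum at $v$ satisfies $\sum_{e \ni v} f(e) = d + d_H(v)$, so an $s$-weighting certifies $s(G) \leq s$ if and only if some such $H$ has all vertex degrees distinct, which is equivalent to $m(H,k) \leq 1$ for every $k$.

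Next I would choose $s = \lceil n/d \rceil + 2c + 2$, where $c$ is the absolute constant from Conjecture~\ref{new_conjecture}. This forces $(s-1)d + 1 \geq n + 2cd + 1$. Applying Conjecture~\ref{new_conjecture} to $G^{(s-1)}$ then yields a spanning subgraph $H$ with
$$m(H,k) \leq \frac{n}{(s-1)d + 1} + 1 + \frac{c}{n} \leq \frac{n}{n + 2cd + 1} + 1 + \frac{c}{n}$$
for every $k$. A routine algebraic check, which rewrites the target inequality $\tfrac{n}{n+2cd+1} + \tfrac{c}{n} < 1$ as $c(n+2cd+1) < n(2cd+1)$, shows that it holds whenever $n$ exceeds an absolute threshold $N(c)$ of order $O(c)$. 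Since $m(H,k)$ is a nonnegative integer, this forces $m(H,k) \leq 1$ for all $k$, and hence $s(G) \leq s \leq n/d + (2c+3)$.

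The remaining small cases $n \leq N(c)$ (with $n > d \geq 2$, hence $d \leq N(c) - 1$ as well) are handled by any standard polynomial bound such as $s(G) \leq n$, which is absorbed into a larger absolute constant $c'$. There is no serious obstacle to the argument; it is essentially a verification. The two points requiring care are (i) that the hypothesis excluding $K_2^{(s-1)d}$ in Conjecture~\ref{new_conjecture} is automatic for the construction $G^{(s-1)}$ when $d \geq 2$, and (ii) that the bookkeeping of the inequality above must be done uniformly in $d$ so that the resulting constant remains independent of both $n$ and $d$.
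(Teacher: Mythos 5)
Your proposal is correct and follows essentially the same route as the paper: blow up each edge of $G$ into parallel copies, identify $s$-edge-weightings of $G$ with spanning sub-multigraphs $H$ of the blown-up multigraph via $f(e)=1+\mu_H(e)$ (so distinct weighted degrees correspond to $m(H,k)\leq 1$), apply Conjecture~\ref{new_conjecture} with $s\approx \frac{n}{d}+O(c)$ to force $m(H,k)<2$, and absorb the finitely many small-$n$ cases into the constant. The only differences (using $s-1$ rather than $s$ parallel copies, taking ceilings, and a slightly larger choice of $s$ giving constant $2c+3$ instead of $c+1$) are cosmetic.
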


\begin{proof}
Let $n>d\geq 2$ and $G$ be any $n$-vertex $d$-regular graph. 
For integers $s\geq 1$, let $G^s$ be the multigraph generated from $G$ by replacing every edge $e = uv\in E(G)$ by $s$ parallel edges $e_1, \ldots, e_s$ with endpoints $u$ and $v$. 
Let $E_e = \{e_1,\ldots,e_s\}$ for each $e\in E(G)$.

Note that $G^s$ is $sd$-regular.
We first claim that if $G^s$ has a spanning subgraph $H$ such that $m(H,k) \in \{0,1\}$ for all $0\leq k\leq sd$, 
then $s(G)\leq s+1$.
To see this, let $f_H$ be the $(s+1)$-edge-weighting of $G$ satisfying $f_H(e):= |E_e\cap H| + 1\in \{1,2,\ldots, s+1\}$. 
Then for all $0\leq k\leq sd$, we have $|\{v:\sum_{e:v\sim e}f_H(e)=k+d\}|= m(H,k)\leq 1$. 
Equivalently, this says that $\sum_{e: v\sim e} f_H(e)$ are distinct for all $v\in V(G)$.

Assume that Conjecture \ref{new_conjecture} holds. 
Since $d\geq 2$, it is evident that $G^s$ does not contain $K_2^{sd}$.
Then there exists an absolute constant $c>0$ such that for any $s\geq 1$,
$G^s$ contains a spanning subgraph $H$ with 
$$\left|m(H,k) - \frac{n}{sd+1}\right|\leq 1 + \frac{c}{n}$$ 
for all $0\leq k \leq sd$. 
Now choosing $s=\frac{n}{d}+c$, so $sd\geq n+2c$.
If $n\geq 2c+1$, then for all $k$,
\[
m(H,k)\leq 1+\frac{n}{sd+1}+\frac{c}{n}\leq 1+\frac{n}{n+2c+1}+\frac{2c}{n+2c+1}<2.
\]
Hence, $m(H,k)\in \{0,1\}$ holds for all $0\leq k\leq sd$.
Using the above claim,
we see that $s(G)\leq \frac{n}{d}+c+1$ for any $n$-vertex $d$-regular graph $G$ with $n\geq 2c+1$. 
In the case when $n<2c+1$, 
there are finite many $d$-regular graphs for all $d\geq 2$.
Since the irregularity strength $s(G)$ exists for such $G$ due to Faudree-Lehel \cite{FL1988},
we may take a sufficiently large constant $C\gg c$ such that 
$s(G)\leq \frac{n}{d}+C$ for any $n$-vertex $d$-regular graph $G$, 
finishing the proof.
\end{proof}

Alon and Wei \cite{ALO2022} also proposed the following conjecture, which can be viewed as a one-sided generalization of Conjecture~\ref{conj}.

\begin{conjecture}[Alon and Wei \cite{ALO2022}]\label{conj2}
Let $G$ be an $n$-vertex graph with minimum degree $\delta$ and maximum degree $\Delta$. Then there exists a spanning subgraph $H$ in $G$ such that $m(H,k) \leq  \frac{n}{\delta+1} + 2$ holds for every $k\geq 0$.
\end{conjecture}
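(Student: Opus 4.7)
The plan is to adapt the local adjustment framework developed in Sections~\ref{Sec:prim} and \ref{Sec:thm1} to the one-sided, non-regular setting. Given an $n$-vertex graph $G$ with minimum degree $\delta$ and maximum degree $\Delta$, I would define a one-sided irregularity measure that penalises only degree classes $V_k^H$ exceeding the target bound $\tfrac{n}{\delta+1}+2$. A natural candidate is
\[
\Phi(H)=\sum_{k=0}^{\Delta}\max\Big(0,\,|V_k^H|-\tfrac{n}{\delta+1}\Big)^{\!2},
\]
refined by a lexicographic tie-breaker in the spirit of Definition~\ref{Dfn:improv}. Call $H'$ an improvement of $H$ if $\Phi(H')<\Phi(H)$, or if $\Phi(H')=\Phi(H)$ but the tie-breaker favours $H'$. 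The goal reduces to showing that whenever some $|V_k^H|>\tfrac{n}{\delta+1}+2$, an improving local adjustment exists.

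For the existence step, I would apply a one-sided analogue of Lemma~\ref{ext} with $A=\{k\}$ and $B$ consisting of classes strictly under capacity. Either there is an $H(k,j)$-edge in $H$ with $V_j^H$ under capacity, whose deletion by Lemma~\ref{change_b}(1) moves mass out of $V_k^H$ without inflating another already-overfull class, or there is an $H(k,j)$-edge in $G\setminus H$ with $V_j^H$ under capacity, whose addition has the analogous effect by Lemma~\ref{change_b}(2). In either case $\Phi$ strictly decreases. The delicate point is that if several consecutive classes around $V_k^H$ are simultaneously at or above capacity, a single edge swap merely relocates the excess from one full class to another, and no improvement is found.

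The main obstacle I anticipate is exactly this cascading scenario. To resolve it, I would generalise the multi-star constructions of Subsection~\ref{subsec:cons-multi-stars} (in particular the parameters $n_i,m_i$ appearing in Lemma~\ref{d+-}) so that, given a long block of consecutive over-capacity classes, one can produce a multi-star in $H$ or in $G\setminus H$ whose removal or addition redistributes degrees across the whole block in a single local operation, depositing the excess into a class with genuine slack. Non-regularity genuinely complicates matters: a multi-star centred at $u$ can only reach $H$-degrees in $\{0,\ldots,d_G(u)\}$, so the double-counting identities behind Lemma~\ref{ext} must be re-weighted by the individual $d_G(u)$'s and by $\delta$ rather than by a uniform $d$. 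Establishing that every sufficiently long over-capacity block admits such a multi-star reaching an under-capacity class is where I expect the real work to lie, and the $+2$ slack in Conjecture~\ref{conj2}, which absorbs parity issues from $\tfrac{n}{\delta+1}$ being non-integral, should become essential.

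Once existence of improvements is in hand, a termination argument analogous to Lemma~\ref{num_op} applies: $\Phi(H)$ takes only finitely many values since each $|V_k^H|\in\{0,1,\ldots,n\}$, and any improvement either strictly decreases $\Phi(H)$ or advances the lexicographic tie-breaker, so the process halts after at most $\mathrm{poly}(n)$ iterations. A sanity check is that specialising to $\delta=\Delta=d$ with this machinery should recover an upper-sided version of Theorem~\ref{thm1} with bound $O(d^2)$, which is still far from the $+2$ promised by Conjecture~\ref{conj2}; closing that gap from $O(d^2)$ down to an absolute constant is the ultimate challenge, and will likely require structural insights beyond the framework above, perhaps combining the local adjustment approach with a probabilistic regularisation of the high-degree vertices so as to reduce to a nearly $\delta$-regular instance.
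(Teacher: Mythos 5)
You have not proved the statement, and it is worth being clear that nothing in the paper does either: Conjecture~\ref{conj2} is stated by Alon and Wei and is left open; the paper records it in the concluding remarks precisely because the methods of Sections~\ref{Sec:prim} and \ref{Sec:thm1} do not extend to it. Your text is a research plan whose two crucial steps are explicitly deferred: you do not establish that an improving local adjustment exists when a block of consecutive degree classes is over capacity, and you concede at the end that even if the whole machinery worked it would deliver an $O(\delta^2)$-type error rather than the absolute constant $2$ demanded by the conjecture. A sketch that names its own missing lemmas is not a proof, so as it stands the proposal has a genuine gap at its core.

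The most concrete obstruction is the one the paper itself points out: the entire Section~\ref{Sec:thm1} argument leans on the symmetry between $H$ and $G\backslash H$ (Proposition~\ref{sym}), which converts statements about over-full high-degree classes into statements about under-full low-degree classes and is what makes both halves of Lemma~\ref{d+-} (the $S$ multi-stars in $H$ and the $\overline{S}$ multi-stars in $G\backslash H$) and the cancellation identity \eqref{n_i-m_i} available simultaneously. In a graph with $\delta<\Delta$ there is no complement of uniform degree, $G\backslash H$ is not a spanning subgraph of a regular host, and the double-counting in Lemma~\ref{ext}(b) (which uses that every vertex of $H$-degree $i-1$ has exactly $d-i+1$ incident edges in $G\backslash H$) has no clean analogue; your suggestion to ``re-weight by the individual $d_G(u)$'s'' is exactly where the argument would have to be rebuilt, and you give no construction showing that a long over-capacity block must reach an under-capacity class via such a weighted multi-star. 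Until that existence lemma is proved — and until you have an idea for getting from a $\mathrm{poly}(\delta)$ error down to $+2$ — the conjecture remains open and your proposal should be presented as a possible line of attack, not a proof.
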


Despite their similar spirit, it is important to highlight a significant distinction between Conjecture~\ref{conj} and Conjecture~\ref{conj2}. In Conjecture~\ref{conj2}, the symmetry between $H$ and $G\backslash H$ no longer holds, whereas this symmetry is crucial in our proofs concerning Conjecture~\ref{conj}.
Alon and Wei have addressed several other open problems in \cite{ALO2022}.
For further details and in-depth discussions, we would like to direct interested readers to refer to \cite{ALO2022}.

\end{document}

\begin{figure}
\centering
\begin{minipage}{.4\textwidth}
\centering
\begin{tikzpicture}[thin][node distance=1cm,on grid]
	\node[circle,inner sep=0.5mm,draw=black,fill=black](x1)at(180:2) [label=above:$x$] {};
	\node[circle,inner sep=0.5mm,draw=black,fill=black](x2)at(0:2) [label=above:$y$] {};
	\node[circle,inner sep=0.5mm,draw=black,fill=black](x3)at(0:0.8) [label=below:$v$] {};
	\node[circle,inner sep=0.5mm,draw=black,fill=black](x4)at(180:0.8) [label=below:$u$] {};
	\draw (0,-1.2) node{};
	\draw (0,1.4) node{};
	\draw[draw,thick,color=blue] (0.8,0) arc (37:143:1);
	\draw[draw,thick,color=blue] (0.8,0) arc (-37:-143:1);
	\draw[thick,color = blue](x1)to(x4);
	\draw[thick,color = blue](x2)to(x3);
	\draw[draw,color=orange] (2.0,0) arc (37:143:5/2);
\end{tikzpicture}	
\label{Fig:operation 1}
\subcaption{Type (\uppercase\expandafter{\romannumeral 1})}
\end{minipage}
\hspace{1.2cm}
\begin{minipage}{.4\textwidth}
\centering
\begin{tikzpicture}[thin][node distance=1cm,on grid]
	\node[circle,inner sep=0.5mm,draw=black,fill=black](x1)at(-2,0.8) [label=above:$x$] {};
	\node[circle,inner sep=0.5mm,draw=black,fill=black](x2)at(2,0.8) [label=above:$y$] {};
	\node[circle,inner sep=0.5mm,draw=black,fill=black](x3)at(-2,-0.8) [label=below:$z$] {};
	\node[circle,inner sep=0.5mm,draw=black,fill=black](x4)at(2,-0.8) [label=below:$w$] {};
	\node[circle,inner sep=0.5mm,draw=black,fill=black](x5)at(0,0.8) [label=above:$u$] {};
	\node[circle,inner sep=0.5mm,draw=black,fill=black](x6)at(0,-0.8) [label=below:$v$] {};
	\draw[thick,color = blue](x1)to(x5);
	\draw[thick,color = blue](x2)to(x5);
	\draw[thick,color = green](x3)to(x6);
	\draw[thick,color = green](x4)to(x6);
	\draw[thick,color = blue](x5)to(x6);
	\draw[draw,color=orange] (2.0,0.8) arc (63.45:116.55:4.472);
	\draw[draw,color=orange] (-2.0,-0.8) arc (243.45:296.55:4.472);
\end{tikzpicture}
\label{Fig:operation 2}
\subcaption{Type (\uppercase\expandafter{\romannumeral 2})}
\end{minipage}
\caption{Key steps in the proof of Lemma~\ref{op1}}
\end{figure}

